\DeclareMathOperator{\conv}{\mathrm{conv}}
\DeclareMathOperator{\tr}{\mathrm{Tr}}
\DeclareMathOperator{\ev}{\mathrm{ev}}
\DeclareMathOperator{\id}{\mathrm{id}}
\DeclareMathOperator{\coker}{\mathrm{coker}}
\DeclareMathOperator{\Aut}{\mathrm{Aut}}
\DeclareMathOperator{\End}{\mathrm{End}}
\DeclareMathOperator{\Ell}{\mathrm{Ell}}
\DeclareMathOperator{\aff}{\mathrm{Aff}}
\DeclareMathOperator{\prob}{\mathcal{M}^1_+}
\DeclareMathOperator{\lip}{\mathrm{Lip}^1}
\DeclareMathOperator{\emb}{\mathrm{Emb}^1}
\DeclareMathOperator{\Span}{\mathrm{span}}
\DeclareMathOperator{\dist}{\mathrm{dist}}
\DeclareMathOperator{\dq}{\mathrm{dist}_{\mathrm{q}}}
\DeclareMathOperator{\isom}{\mathrm{Isom}_{\varepsilon}}
\DeclareMathOperator*{\free}{{\scalebox{1.7}{$\ast$}}}
\newcommand{\nn}{\mathbb{N}}
\newcommand{\zz}{\mathbb{Z}}
\newcommand{\qq}{\mathbb{Q}}
\newcommand{\rr}{\mathbb{R}}
\newcommand{\cc}{\mathbb{C}}
\newcommand{\pp}{\mathbb{P}}
\newcommand{\ee}{\mathbb{E}}
\newcommand{\ff}{\mathbb{F}}
\newcommand{\cs}{\mathrm{C}^*}
\newcommand{\js}{\mathcal{Z}}
\newtheorem {theorem}{Theorem}[section]
\newtheorem {proposition}[theorem]{Proposition}
\newtheorem {corollary}[theorem]{Corollary}
\newtheorem {thm}{Theorem}
\theoremstyle {definition}
\newtheorem {example}[theorem]{Example}
\newtheorem {definition}[theorem]{Definition}
\newtheorem {remark}[theorem]{Remark}
\newtheorem {question}[theorem]{Question}
\newtheorem {notation}[theorem]{Notation}
\numberwithin{equation}{section}
\theoremstyle{nonumberplain}
\newtheorem* {claim}{Theorem}
\title{Quantum metric Choquet simplices}
\author[B.~Jacelon]{Bhishan Jacelon}
\address[B.~Jacelon]{
Institute of Mathematics of the Czech Academy of Sciences\\ \v{Z}itn\'{a} 25\\115 67 Prague 1\\Czech Republic}
\email{\href{mailto:jacelon@math.cas.cz}{jacelon@math.cas.cz}}
\subjclass[2020]{46L05, 46L35, 46L55}
\keywords{Quantum metric spaces, Choquet simplices, $\cs$-dynamical systems, $\cs$-classification}
\begin{document}

\begin{abstract} 
Precipitating a notion emerging from recent research, we formalise the study of a special class of compact quantum metric spaces. Abstractly, the additional requirement we impose on the underlying order unit spaces is the Riesz interpolation property. In practice, this means that a `quantum metric Choquet simplex' arises as a unital $\cs$-algebra $A$ whose trace space is equipped with a metric inducing the $w^*$-topology, such that tracially Lipschitz elements are dense in $A$. This added structure is designed for measuring distances in and around the category of stably finite classifiable $\cs$-algebras, and in particular for witnessing metric and statistical properties of the space of approximate unitary equivalence classes of unital embeddings of $A$ into a stably finite classifiable $\cs$-algebra $B$. As for examples, we recall the construction of classifiable $\cs$-algebraic quantum metric Bauer simplices that function as noncommutative spaces of observables of compact connected metric spaces $(X,\rho)$. We also explain how to build non-Bauer examples by forming `tracial quantum crossed products' associated with topological dynamical systems on $(X,\rho)$, and we use classification to show that continuous fields of quantum spaces are obtained by continuously varying either the dynamics or the metric. In the case of deformed isometric actions, we show that equivariant Gromov--Hausdorff continuity implies fibrewise continuity of the quantum structures with respect to Rieffel's quantum Gromov--Hausdorff distance. As an example, we present a field of deformed tracial rotation algebras whose fibres are continuous with respect to a quasimetric that we call the quantum intertwining gap.
\end{abstract}

\maketitle

\section{Introduction} \label{section:intro}

This paper's purpose is an abstraction of the metric structure found in certain model $\cs$-algebras that has been used in recent analysis \cite{Jacelon:2021wa,Jacelon:2021vc,Jacelon:2022wr} of morphisms into the category of classifiable $\cs$-algebras. Here, `classifiable' means via the Elliott invariant, and we frequently use this term as an abbreviation for `simple, separable, nuclear, $\js$-stable and satisfying the universal coefficient theorem (UCT)' (see \cite{Gong:2020ud,Gong:2020uf,Elliott:2020wc,Gong:2020vg,Gong:2021tw,Elliott:2016ab} and also \cite{Carrion:wz}). The Elliott invariant $\Ell$ consists of $K$-theory, traces and the pairing between them, and we will make liberal use of it and its components throughout the article. This should not deter nonexperts, however, as our practical use of classification theory does not require familiarity with the intricate inner workings of its anatomy.

The metric structure in question is extraneous to what is guaranteed by $\cs$-algebraic axioms, and the framework for our analysis is Rieffel's notion of a \emph{compact quantum metric space} \cite{Rieffel:2004aa}. Abstractly, we additionally require the Riesz interpolation property (as well as completeness). In the $\cs$-algebraic setting, this means that our function space is not the self-adjoint part $A_{sa}$ of a $\cs$-algebra $A$ but rather its Cuntz--Pedersen quotient $A^q$, and in practice this means that we are interested in metrics inducing the $w^*$-topology on the \emph{trace} space $T(A)$, rather than the \emph{state} space $S(A)$ (see Theorem~\ref{thm:cqmcs}). A suitable \emph{nucleus} $\mathcal{D}_r(A)$ of tracially Lipschitz elements (see Theorem~\ref{thm:nucleus}) then provides a basis for distance measurement (as in Section~\ref{subsubsection:unitary}, Example~\ref{ex:metricdeform2} and \cite{Jacelon:2021wa,Jacelon:2021vc}) or the observation of statistical properties of tracial dynamics (as in Section~\ref{subsubsection:birkhoff}, Section~\ref{subsection:random} and \cite{Jacelon:2022wr}).

Our base of examples comes from \cite[Theorem 4.4 and Remark 4.5]{Jacelon:2022wr}, restated here for convenience. Quantum metric \emph{Bauer} simplices are special quantum metric Choquet simplices for which the metric on the trace space has been induced in a canonical way from a metric on the compact boundary (see Definition~\ref{def:qmbs}).

\begin{claim}[\cite{Jacelon:2022wr}]
For every compact metric space $(X,\rho)$, there is a $\cs$-algebraic quantum metric Bauer simplex $(\mathcal{A}_X,L_\rho)$ such that $\mathcal{A}_X$ is unital and classifiable with extreme tracial boundary $\partial_e(T(\mathcal{A}_X))$ homeomorphic to $X$, and such that the seminorm $L_\rho$ induces the metric $\rho$. If $X$ is connected, then $\mathcal{A}_X$ can be chosen to be $K$-connected, and if moreover $X$ is a Riemannian manifold with $\rho$ the associated intrinsic metric, then $\mathcal{A}_X$ can be chosen to be projectionless.
\end{claim}

The projectionless models associated with Riemannian manifolds are denoted by $\js_X$, because we consider them to be tracially higher-dimensional analogues of the Jiang--Su algebra $\js$ within the category of classifiable quantum metric Bauer simplices (built as they are out of generalised dimension drop algebras over the space $X$). These structures are of particular interest because, to the extent that projections represent obstacles, there is maximal opportunity for a rich space of embeddings into a given classifiable $\cs$-algebra $B$.

Our interpretation of the above theorem, and the overarching viewpoint of the article, is that the $\cs$-algebra $\mathcal{A}_X$ serves as a noncommutative space of observables of the metric space $(X,\rho)$. In Rieffel's work, `quantum' is used to describe exactly this sort of situation in which order unit spaces (for us, $\mathcal{A}_X^q$) observe their state spaces (for us, $T(\mathcal{A}_X)$). Rieffel opts for `quantum' over `noncommutative' because the multiplicative structure of the ambient $\cs$-algebra \emph{a priori} plays no role in statial observation. But through classification \cite{Carrion:wz}, the $\cs$-algebraic structure \emph{does} come into play: topological dynamical systems $(X,h)$ can be lifted to $\cs$-dynamical systems $(\mathcal{A}_X,\alpha_h)$ whenevver $\mathcal{A}_X$ is classifiable and \emph{$K$-connected}, which is our replacement for what is referred to in \cite[Definition  3.3]{Jacelon:2022wr} as having \emph{trivial tracial pairing}. Its definition is that the ordered $K_0$-group of the minimal unitisation should admit a unique state. We think of this as a connectedness condition since an abelian $\cs$-algebra $C_0(X)$ is $K$-connected precisely when $X\cup\{\infty\}$ is connected. This property allows for the tracial and $K$-theoretic parts of the classifying invariant to be manipulated independently: any continuous affine map $T(A) \to T(A)$ is automatically compatible with any morphism $K_*(A) \to K_*(A)$, so together they provide a morphism $\Ell(A) \to \Ell(A)$ that can then be lifted via classification \cite[Corollary C]{Carrion:wz} to an endomorphism of $A$.

Throughout the article, our use of the word `quantum' is in keeping with Rieffel's (rather than having anything to do with quantum physics or quantum groups). To emphasise that our observations are specifically made via the evaluation map $A\to\aff(T(A))$, we may also include the word `tracial'. So, a \emph{tracial quantum system associated with a topological dynamical system $(X,h)$} means a $\cs$-dynamical system $(A_X,\alpha_h)$, where $A_X$ is a $\cs$-algebra with $\partial_e(T(A_X))\cong X$ (not necessarily one of the classifiable models $\mathcal{A}_X$) and $\alpha_h\colon A_X \to A_X$ is a $^*$-homomorphism such that $T(\alpha_h)|_{\partial_e(T(A_X))}=h$. By a \emph{tracial quantum crossed product associated with $(X,h)$} we mean that the crossed product $C(X)\rtimes_{h^*}\nn$ that one classically attaches to $(X,h)$ has been replaced by $A_X\rtimes_{\alpha_h}\nn$ (see Definition~\ref{def:qcp}, as well as Remark~\ref{rem:groups}, Theorem~\ref{thm:free} and Example~\ref{ex:sphere}, which consider group actions $G\to\Aut(C(X))$ beyond $G=\zz$).

In $\cs$-algebraic analyses of topological dynamical systems $(X,h)$, properties like minimality and freeness of $h$ are used to regulate the structure of the crossed product $C(X)\rtimes_{h^*}\nn$. In the present work, as in \cite{Jacelon:2022wr}, we replace such topological hypotheses by \emph{finite Rokhlin dimension}, which is a regularity property of the noncommutative system $\alpha_h\colon A_X \to A_X$ that is in fact generic whenever $A_X$ is classifiable (see \cite{Hirshberg:2015wh,Szabo:2019te} and also \cite[Definition 4.6, Lemma 4.7]{Jacelon:2022wr}). With this extra ingredient in place, much can be said about the structure of $A_X\rtimes_{\alpha_h}\nn$. In particular, $A_X\rtimes_{\alpha_h}\nn$ is itself classifiable with $T(A_X\rtimes_{\alpha_h}\nn) \cong \prob(X)^h$, the simplex of $h$-invariant Borel probability measures on $X$, and if $A_X$ is equipped with $\cs$-algebraic quantum metric Bauer structure associated with the metric space $(X,\rho)$, then $A\rtimes_{\alpha_h}\nn$ becomes in a natural way a quantum metric Choquet simplex in its own right (see Proposition~\ref{prop:inherit}). 

Inspired by \cite{Kaad:2021aa}, one of our main tasks is to analyse the variation of this inherited structure as we continuously deform either the metric $\rho$ on $X$ (Theorem~\ref{thm:metricdeform}) or the dynamics $(X,h)$ (Theorem~\ref{thm:cf}). A consequence of  \cite{Carrion:2024aa} is that the varying structures $(A_X\rtimes_{\alpha_{h_\theta}}\nn)_{\theta\in\Theta}$ can be assembled into continuous fields of quantum spaces (see Definition~\ref{def:cf}).

\begin{thm} \label{thm2}
Let $(X,\rho)$ be a compact, connected metric space, $\Theta$ a compact metrisable space and $(h_\theta)_{\theta\in\Theta}$ a family of uniformly Lipschitz homeomorphisms $X\to X$ that varies continuously with respect to the equivariant Gromov--Hausdorff topology. Then, there exists a unital classifiable $\cs$-algebraic quantum metric Bauer simplex $A_X$ associated with $(X,\rho)$, and a point-norm continuous family of tracial quantum systems $(A_X,\alpha_\theta)_{\theta\in\Theta}$ associated with $(X,h_\theta)_{\theta\in\Theta}$, such that $((A_X\rtimes_{\alpha_\theta}\nn)_{\theta\in\Theta},C(\Theta,A_X)\rtimes_{\{\alpha_\theta\}}\nn)$ forms a continuous field of unital classifiable $\cs$-algebraic quantum metric Choquet simplices whose fibres vary continuously with respect to the quantum Gromov--Hausdorff distance.
\end{thm}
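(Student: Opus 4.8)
The plan is to build $A_X$ and the family $(\alpha_\theta)$ from Theorem~\ref{thm1}, then combine three ingredients: (i) the $C^*$-algebraic continuous-field machinery of \cite{Carrion:2024aa}, which upgrades a point-norm continuous family of classifiable $C^*$-algebras with continuously varying Elliott invariants to a genuine continuous field; (ii) the quantum-metric inheritance result Proposition~\ref{prop:inherit}, applied fibrewise; and (iii) the quantitative continuity statement Theorem~\ref{thm:cf}, which converts equivariant Gromov--Hausdorff convergence of the dynamics into quantum Gromov--Hausdorff convergence of the inherited quantum metric structures. So the first step is to invoke Theorem~\ref{thm1} to produce the classifiable quantum metric Bauer simplex $(A_X, L_\rho)$ with $\partial_e T(A_X)\cong X$; since $X$ is connected we may take $A_X$ to be $K$-connected, which is exactly what is needed for the tracial and $K$-theoretic halves of $\Ell$ to be manipulated independently. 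Next I would use $K$-connectedness together with the $K$-theoretic rigidity of $A_X$ to promote each homeomorphism $h_\theta\colon X\to X$ to an automorphism $\alpha_\theta$ of $A_X$: on traces $\alpha_\theta$ acts by $(h_\theta)_*$ on $\prob(X)$, and on $K_*$ it acts trivially (or by some fixed automorphism), and $K$-connectedness guarantees these assemble to an automorphism of $\Ell(A_X)$, which classification realises by $\alpha_\theta\in\Aut(A_X)$. Uniform Lipschitzness of the $h_\theta$ plus the continuity of $\theta\mapsto h_\theta$ in the equivariant Gromov--Hausdorff topology should, via the classification functor's continuity properties, yield point-norm continuity of $\theta\mapsto\alpha_\theta$; this is where I expect to lean on the uniform-approximation estimates implicit in the proof of Theorem~\ref{thm1}.

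With the quantum system $(A_X,\alpha_\theta)_{\theta\in\Theta}$ in hand, I would form the quantum crossed products $A_X\rtimes_{\alpha_\theta}\nn$ and, using finite Rokhlin dimension of the $\alpha_\theta$ (generically available, as noted in the introduction) together with the permanence properties cited there, conclude that each $A_X\rtimes_{\alpha_\theta}\nn$ is again unital and classifiable with $T(A_X\rtimes_{\alpha_\theta}\nn)\cong\prob(X)^{h_\theta}$. Then I would apply Proposition~\ref{prop:inherit} fibrewise to equip each $A_X\rtimes_{\alpha_\theta}\nn$ with its inherited quantum metric structure, making it a quantum metric Choquet simplex. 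The field structure comes from the global crossed product $C(\Theta,A_X)\rtimes_{\{\alpha_\theta\}}\nn$: I would check that the family $(\alpha_\theta)$ defines a $C(\Theta)$-linear automorphism $\{\alpha_\theta\}$ of $C(\Theta,A_X)$ whose fibre at $\theta$ is $\alpha_\theta$, so that the fibre of $C(\Theta,A_X)\rtimes_{\{\alpha_\theta\}}\nn$ at $\theta$ is $A_X\rtimes_{\alpha_\theta}\nn$; point-norm continuity of $\theta\mapsto\alpha_\theta$ is what makes this crossed product a continuous $C(\Theta)$-algebra, and \cite{Carrion:2024aa} (as invoked for Definition~\ref{def:cf}) then certifies it as a continuous field of classifiable $C^*$-algebras.

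The remaining and genuinely substantive point is fibrewise continuity with respect to Rieffel's quantum Gromov--Hausdorff distance. Here the strategy is to reduce to Theorem~\ref{thm:cf}: since the quantum metric on $A_X\rtimes_{\alpha_\theta}\nn$ is, by Proposition~\ref{prop:inherit}, determined by the fixed quantum metric $L_\rho$ on $A_X$ together with the dynamics $h_\theta$ on the tracial boundary, I would show that $\dq$-convergence of the inherited structures follows from $w^*$-metric convergence of the invariant-measure simplices $\prob(X)^{h_\theta}$, which in turn is controlled by the equivariant Gromov--Hausdorff convergence of $(X,h_\theta)$ via the uniform Lipschitz bound. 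Concretely, given $\theta_n\to\theta$, I would build, for each $n$, an admissible seminorm on (a suitable dense subalgebra of) $(A_X\rtimes_{\alpha_{\theta_n}}\nn)\oplus(A_X\rtimes_{\alpha_\theta}\nn)$ by lifting tracially Lipschitz elements through the common underlying $A_X$ and estimating the defect using the distance between $h_{\theta_n}$ and $h_\theta$; the uniform Lipschitz constant ensures the bridge seminorms have controlled norm, and the equivariant GH convergence ensures the bridge defect tends to $0$. I expect the main obstacle to be precisely this last estimate: matching up tracially Lipschitz elements across the two fibres in a way that simultaneously respects the order-unit (Cuntz--Pedersen quotient) structure and keeps the Lipschitz seminorm under control, since the invariant-measure simplices $\prob(X)^{h_{\theta_n}}$ sit inside the fixed simplex $\prob(X)$ but move with $n$, so one must carefully transport the inherited boundary metric. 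Once that estimate is established, combining it with the already-verified continuous-field structure gives the full statement.
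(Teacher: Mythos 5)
Your outline assembles the right ingredients, but there is a genuine gap at the step where you produce the family $(\alpha_\theta)_{\theta\in\Theta}$. You propose to lift each $h_\theta$ \emph{separately} to an automorphism $\alpha_\theta\in\Aut(A_X)$ by classification and then to deduce point-norm continuity of $\theta\mapsto\alpha_\theta$ from ``the classification functor's continuity properties''. No such continuity exists: classification determines each lift only up to approximate unitary equivalence, so fibrewise lifting gives a family of representatives with no control whatsoever on how they vary with $\theta$, and choosing representatives continuously is precisely the hard part of the theorem. The paper avoids this by lifting the \emph{entire family at once}: the map $(\theta,x)\mapsto h_\theta(x)$ extends to a continuous affine map $T(C(\Theta,A_X))\to T(A_X)$ (using $T(C(\Theta,A_X))$ being a Bauer simplex with boundary $\Theta\times X$), $K$-connectedness makes this compatible with $\iota_*\colon K_*(A_X)\to K_*(C(\Theta,A_X))$, and the classification of unital embeddings into the non-simple, $\js$-stable, strict-comparison codomain $C(\Theta,A_X)$ from \cite{Carrion:2024aa} produces a single unital embedding $A_X\to C(\Theta,A_X)$ whose fibres are the $\alpha_\theta$; point-norm continuity is then automatic. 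Even so, one only gets \emph{endomorphisms}, each approximately unitarily equivalent to an automorphism, not genuine automorphisms (see Remark~\ref{rem:endo}); your claim to obtain $\alpha_\theta\in\Aut(A_X)$ is stronger than what the available classification technology delivers.

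On the final step, your bridge-construction strategy for $\dq$-continuity is more elaborate than necessary, and the feature you flag as the ``main obstacle'' --- that the simplices $\prob(X)^{h_{\theta_n}}$ all sit inside the fixed metric space $(\prob(X),W_\rho)$ while moving with $n$ --- is in fact the solution: by \cite[Theorem 1.3]{Chung:2020aa}, equivariant Gromov--Hausdorff continuity of $\theta\mapsto h_\theta$ gives Hausdorff continuity of $\theta\mapsto\prob(X)^{h_\theta}$ inside $(\prob(X),W_\rho)$, and Proposition~\ref{prop:gh} then converts this common isometric embedding directly into a bound on $\dq$ between the fibres, with no need to construct admissible seminorms by hand. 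You should also note the degenerate uniquely ergodic fibres, where the inherited seminorm must be defined as in Remark~\ref{rem:unique} and one only obtains upper semicontinuity of $\eta\mapsto L_\eta(f_\eta)$ --- which is exactly what Definition~\ref{def:cf} requires.
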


Example~\ref{ex:rotation} describes the field of deformed tracial rotation algebras, a special instance of Theorem~\ref{thm2} in which the \emph{quantum intertwining gap} $\gamma_q$ between the fibres is also continuous. This variant of Rieffel's quantum Gromov--Hausdorff distance $\dq$ (see Definition~\ref{def:qgh}) is introduced in Definition~\ref{def:quig} alongside our analysis of the metric space $\mathbb{QMCS}$ of isometry classes of quantum metric Choquet simplices and the subspace $\mathbb{QMBS}$ of isometry classes of quantum metric Bauer simplices. (See also Corollary~\ref{cor:quig}, which in the classifiable Bauer setting provides a `quantum' interpretation of $\gamma_q$, that is, a description of $\gamma_q$ at the $\cs$-algebraic level.) Following Rieffel, we show in Theorem~\ref{thm:quig} and Corollary~\ref{cor:complete} that $(\mathbb{QMBS},\dq)$ is complete. The corresponding question for $\mathbb{QMCS}$ is left open (see Question~\ref{q:qmcs}).

A well-known deficiency of the distance $\dq$ is that it does not capture $\cs$-algebraic data. In particular, while distance zero between $\cs$-algebraic compact quantum metric spaces $(A,L)$ and $(B,M)$ necessitates the existence of an order-theoretic isomorphism between the self-adjoint parts of $A$ and $B$, this in general does not imply that $A$ and $B$ are isomorphic as $\cs$-algebras. In contrast to earlier works like \cite{Kerr:2009aa} and \cite{Latremoliere:2016wn}, our approach to addressing this discrepancy is to use classification to solve a $K$-theoretically `localised' version of the problem (see Corollary~\ref{cor:quig}).

\begin{thm} \label{thm1}
Let $\mathcal{K}$ denote the set of isometric isomorphism classes of $\cs$-algebraic quantum metric Bauer simplices $(A,L)$, where $A$ is a unital, $K$-connected, classifiable $\cs$-algebra and $(A,L)$ is equivalent to $(B,M)$ if there is a $^*$-isomorphism $\varphi \colon A \to B$ such that $M \circ \varphi = L$. Given a countable abelian group $G_1$ and a countable, simple, weakly unperforated ordered abelian group $(G_0,G_0^+)$ with distinguished order unit $g_0\in G_0^+$, write $G= (G_0,G_0^+,g_0,G_1)$ and
\[
\mathcal{K}_{G} = \{[(A,L)] \in \mathcal{K} \mid (K_0(A),K_0(A)_+,[1_A],K_1(A)) \cong G\}.
\]
Then, for every such $G$, $\gamma_q$ is a quasimetric on $\mathcal{K}_{G}$ that is topologically equivalent  to $\dq$.
\end{thm}

We do not always insist on unitality and indeed in Section~\ref{section:qcp} we sometimes rely on model $\cs$-algebras that are stably projectionless, in particular the $\mathcal{W}$-stable algebras classified in \cite{Elliott:2020wc} and the $\js_0$-stable algebras classified in \cite{Gong:2020vg}. Here, $\mathcal{W}$ and $\js_0$ are stably projectionless analogues of the Cuntz algebras $\mathcal{O}_2$ and $\mathcal{O}_\infty$ respectively (and, like $\js$, can also be described as Fra\"{i}ss\'{e} limits---see \cite{Jacelon:2021uc}). Tensorial absorption of $\mathcal{W}$ entails trivial $K$-theory (which under the UCT is equivalent to `$KK$-contractibility') and tensorial absorption of $\js_0$ is a weaker condition that necessitates $K$-connectedness (which for nonunital $\cs$-algebras means that $K_0(A) \subseteq \ker\rho_A$, where $\rho_A\colon K_0(A) \to \aff(T(A))$ denotes the pairing map between $K$-theory and traces). In this stably projectionless setting, it is possible to compute the Elliott invariant of $A_X\rtimes_{\alpha_h}\nn$ and therefore determine its (stable) isomorphism class. Indeed, the proof of \cite[Theorem B]{Jacelon:2022wr} demonstrates that every $\mathcal{W}$-stable classifiable $\cs$-algebra is stably isomorphic to a tracial quantum crossed product associated with a minimal subshift (in fact, an almost 1-1 extension of the dyadic odometer). Essentially the same argument yields a more general result (see Theorem~\ref{thm:range}).

\begin{thm} \label{thm3}
Every $\js_0$-stable classifiable $\cs$-algebra is stably isomorphic to a tracial quantum crossed product associated with a minimal homeomorphism of a zero-dimensional space.
\end{thm}

One final comment we should make is on the difference between our \emph{quantum} Choquet simplices and the \emph{noncommutative} Choquet simplices of \cite{Kennedy:2022aa}. This closeness of terminology genuinely was unintentional, but there is an illuminating explanation for our progression along a parallel path. The starting point in both cases is the function system $\aff(K)$ of continuous affine functions on a compact, convex set $K$. The framework of \cite{Kennedy:2022aa} is the noncommutative convexity theory developed in \cite{Davidson:2022aa} using the language of operator systems. The other available avenue in the separable setting, the one down which we have followed Rieffel, is to view $\aff(K)$ as a space of quantum observables of the metric space $K$. In light of the work developed in \cite{Kerr:2009aa}, there is likely some scope for a combination of these two approaches.

Ultimately, this article is in support of an earthy philosophy of mathematics. As pointed out in \cite[Proposition 1.1]{Rieffel:2002aa}, there are artificial ways of endowing separable $\cs$-algebras with Lipschitz seminorms. But the highly structured $\cs$-algebras that are the focus of the classification programme are not descended from the heavens. Rather, they have evolved organically from the ground up into diverse populations (inductive limits, crossed products, graph algebras, \textellipsis) whose close relationships sometimes only become apparent via the deep molecular analysis provided by modern classification theory \cite{Carrion:2024aa,Carrion:wz}. Chances are that when we think of the homeomorphism class of an interval, we imagine, well, an interval. Or perhaps not. Maybe my interval is a rigid rod but yours, a vibrating piano string, has time-dependent geometry. And so it is with a classifiable $\cs$-algebra. Chances are that you or I may have a favourite model construction that perhaps in some way is naturally geometric. It is the intention of this article to provide an abstract framework for remembering these geometric origins and measuring and comparing their quantum effects.

\subsection*{Organisation} In Section~\ref{section:cqms}, we recall the definition and some features of compact quantum metric spaces, and introduce our main objects of study (see Definitions~\ref{def:qmcs},~\ref{def:cqmcs}~and~\ref{def:qmbs}) and basic examples (see Example~\ref{ex:cqmcs}). Then in Section~\ref{section:qgh}, we investigate the metric space $(\mathbb{QMCS},\dq)$ of isometry classes of quantum metric Choquet simplices equipped with Rieffel's quantum Gromov--Hausdorff distance (or the quantum intertwining gap defined in Definition~\ref{def:quig}) and prove Theorem~\ref{thm1} (see Corollary~\ref{cor:quig}). In Section~\ref{section:qcp}, we study tracial quantum systems (including random systems associated with Markov--Feller processes in Section~\ref{subsection:random}), build new examples via tracial quantum crossed products (see Corollary~\ref{cor:dynqmcs}) and prove Theorem~\ref{thm3} (see Theorem~\ref{thm:range}). Finally, in Section~\ref{section:cf} we analyse the behaviour of our constructions under continuous metric deformation (see Theorem~\ref{thm:metricdeform}) or continuous parameter variation, and present the proof of Theorem~\ref{thm2} (see Theorem~\ref{thm:cf}).

\subsection*{Acknowledgements} This work is dedicated to the former and future members of Prague's NCG\&T group, to our family and friends, to our fathers and forefathers. My research is supported by the Czech Science Foundation (GA\v{C}R) project 22-07833K and the Institute of Mathematics of the Czech Academy of Sciences (RVO: 67985840). I am grateful to Hanfeng Li and Stuart White for helpful discussions at the Fields Institute during the Thematic Program on Operator Algebras and Applications in the autumn of 2023, and to the members of the Operator Algebra group at the Southern University of Denmark for their kind hospitality and useful feedback during my visit to Odense in February 2024.

\begin{notation} \label{notation}
We adopt the following notation and terminology throughout the article.

\begin{itemize}
\item The set of nonzero positive integers is denoted by $\nn$, and the set $\nn\cup\{0\}$ is denoted by $\nn_0$.
\item If $(E,\|\cdot\|)$ is a normed space, then for $r\ge0$, $B_r(E)$ denotes the ball of radius $r$ centred at $0$, that is,
\[
B_r(E) = B_r(E,\|\cdot\|) = \{x\in E \mid \|x\|\le r\}.
\]
\item If $A$ is a $\cs$-algebra, then $A_{sa}$ denotes the set of self-adjoint elements of $A$, $A_+$ denotes its positive cone, and $\Aut(A)$ and $\End(A)$ denote, respectively, the spaces of $^*$-automorphisms and $^*$-endomorphisms of $A$.
\item A \emph{tracial state} on a $\cs$-algebra $A$ is a positive, norm-one linear functional $\tau\colon A\to\cc$ that satisfies the trace identity $\tau(uau^*)=\tau(a)$ for every $a\in A$ and unitary $u$ in the minimal unitisation $\tilde{A}$ of $A$. We call the collection $T(A)$ of tracial states on $A$ the \emph{trace space of $A$}. It is naturally equipped with the $w^*$-topology (meaning that $\tau_i\to\tau$ if and only if $\tau_i(a)\to\tau(a)$ for every $a\in A$). When $T(A)$ is a Bauer simplex (see Section~\ref{subsection:qmbs}), $\alpha\in\End(A)$ is occasionally in this article called \emph{tracially nondegenerate} if the induced map $T(\alpha)$ preserves the  boundary $\partial_e(T(A))$ (and hence also $T(A)$ itself).
\item A $\cs$-algebra $A$ is said to be \emph{classifiable} if it is simple, separable, nuclear, $\js$-stable and satisfies the UCT. 
\item A stably finite $\cs$-algebra $A$ is said to be \emph{$K$-connected} if the ordered $K_0$-group of the minimal unitisation of $A$ admits a unique state.
\item For a compact Hausdorff space $X$, we identify $T(C(X))$ with the space $\prob(X)$ of Borel probability measures on $X$. Given $\mu\in\prob(X)$ and a continuous map $h\colon X\to X$, $h_*\mu$ denotes the pushforward measure $\mu\circ h^{-1}$.
\item Given a metric $\rho$ on $X$, $\lip(X,\rho)$ denotes the $1$-Lipschitz functions $X\to\rr$, that is,
\[
\lip(X,\rho) = \{f\colon X\to\rr \mid |f(x)-f(y)|\le\rho(x,y) \text{ for every } x,y\in X\}.
\]
The \emph{radius} of $(X,\rho)$ is $r_{(X,\rho)} = \max_{x,y\in X}\frac{\rho(x,y)}{2}$. For $Y\subseteq X$, $Y_\varepsilon$ denotes its \emph{$\varepsilon$-neighbourhood}, that is, $Y_\varepsilon = \bigcup_{x\in Y} B_\varepsilon(x)$, where $B_\varepsilon(x)$ is the open ball of radius $\varepsilon$ centred at $x\in X$.
\item If $K$ is a compact, convex subset of a (real) topological vector space, then $\aff(K)$ denotes the space of of continuous affine maps $K\to\rr$ (where `affine' means that finite convex combinations are preserved). When $K=T(A)$ and $a\in A$, $\widehat{a}$ denotes the element $\tau\mapsto\tau(a)$ of $\aff(T(A))$.
\item If $A$ and $B$ are unital $\cs$-algebras, then $\emb(A,B)$ denotes the set of unital embeddings (that is, injective $^*$-homomorphisms) $A\to B$.
\item If $\gamma$ is an action of a group $G$ on a set $S$, then $S^\gamma$ denotes the set of elements of $S$ that are fixed by $\gamma(g)$ for every $g\in G$. If $\gamma$ is some induced action $\alpha^*$ (for example, the action on $T(A)$ induced by an automorphism of $A$), then we will simply write $S^\alpha$ rather than $S^{\alpha^*}$.
\item The initialisms CQMS, QMCS and QMBS stand for, respectively, `compact quantum metric space', `quantum metric Choquet simplex' and `quantum metric Bauer simplex'. When one of these is adorned with the prefix `$\cs$', this indicates that the origin of the underlying order unit space is $A_{sa}$ for some $\cs$-algebra $A$.
\item If $(E,L)$ is a CQMS, then $r_E=r_{(E,L)}$ denotes the radius of its \emph{state space}
\[
S(E) = \{\text{positive unital linear maps } E\to \rr\}
\]
when equipped with the induced metric $\rho_L$ (see \eqref{eqn:rho}).
\end{itemize}
\end{notation}

\section{Quantum spaces} \label{section:cqms}

In this section, we recall the definition of a compact quantum metric space and introduce quantum metric Choquet simplices.

\subsection{Compact quantum metric spaces} \label{subsection:cqms}

\begin{definition} \label{def:ous}
An \emph{order unit space} consists of a pair $(E,e)$, where $E$ is a real ordered vector space and $e$ is an \emph{order unit} (that is, for every $x\in E$ there exists $r\in\rr$ with $x\le re$) such that the \emph{Archimedean property} holds: if $x\le re$ for every $r\in\rr_+$, then $x\le0$. The associated norm on $E$ is
\begin{equation} \label{eqn:ounorm}
\|x\| = \inf\{r>0 \mid -re \le x\le re\}.
\end{equation}
We refer to $(E,e)$ as a \emph{complete order unit space} if $E$ is complete with respect to the norm \eqref{eqn:ounorm}.
\end{definition}

The example to bear in mind is the self-adjoint part $A_{sa}$ of a unital $\cs$-algebra $A$, with order unit $e=1_A$ and the usual order and norm.

\begin{definition} \label{def:cqms}
A \emph{compact quantum metric space} (CQMS) is an order unit space $(E,e)$ equipped with a \emph{Lip-norm} $L$, that is, a seminorm $E\to[0,\infty]$ such that:
\begin{enumerate}[1.]
\item $\ker L = \rr e$;
\item the topology on the state space $S(E)$ induced by the metric
\begin{equation} \label{eqn:rho}
\rho_L(\sigma,\tau) = \sup\{|\sigma(x)-\tau(x)| \mid x\in E,\:L(x)\le 1\}
\end{equation}
is the $w^*$-topology.
\end{enumerate}
\end{definition}

\begin{remark} \label{rem:lsc}
One difference between Definition~\ref{def:cqms} and Rieffel's definition \cite[Definition 2.2]{Rieffel:2004aa} of a compact quantum metric space is that in \cite{Rieffel:2004aa}, the Lip-norms take finite values. Certain properties (in particular, the notion of an \emph{isometry} between spaces---see Section~\ref{section:qgh}) are then phrased in terms of the norm completion $\overline{E}$ of $E$, on which one defines the \emph{closure} $L^c$ of $L$ as
\begin{equation} \label{eqn:closure}
L^c(y)=\inf\left\{\liminf_{n\to\infty} L(x_n) \mid x_n\in E,\: x_n\to y\right\}.
\end{equation}
\emph{Closed} compact quantum metric spaces, that is, those for which $L=L^c$, are then of particular interest. Since our examples are naturally associated with \emph{complete} order unit spaces, we follow \cite{Kerr:2009aa} and allow Lip-norms to take the value $+\infty$. In this setting, the closure operation replaces $L$ by the largest lower semicontinuous Lip-norm that it dominates. This does not affect the induced metric on the state space (see \cite[Theorem 4.2]{Rieffel:1999aa}).

A second point of divergence between the present treatment and \cite{Rieffel:2004aa} is that lower semicontinuity is a natural feature of all of our of Lip-norms. In fact, the starting point of constructions like Example~\ref{ex:cqmcs}, Proposition~\ref{prop:inherit}, Theorem~\ref{thm:metricdeform} and Theorem~\ref{thm:cf} is a metric $\rho$ inducing the $w^*$-topology on the state space $S(E)$ of a closed order unit space $E$, with $L=L_\rho$ then defined as
\begin{equation} \label{eqn:L}
L_\rho(x) = \sup\left\{\frac{|\sigma(x)-\tau(x)|}{\rho(\sigma,\tau)} \mid \sigma\ne \tau \in S(E)\right\}.
\end{equation}
This formula exhibits $L$ as the pointwise supremum of continuous functions, which implies that $L$ is lower semicontinuous. By \cite[Proposition 3.1]{Li:2006aa} (which is based on \cite[Section 9]{Rieffel:1999aa}), the maps $L\mapsto\rho_L$ and $\rho\mapsto L_\rho$ defined by \eqref{eqn:rho} and \eqref{eqn:L}  are inverse bijections between the set of lower semicontinuous Lip-norms on $E$ and the set of metrics $\rho$ inducing the $w^*$-topology on $S(E)$ that are \emph{convex}, that is, satisfy
\begin{equation} \label{eqn:convex}
\rho\left(\sum_{i=1}^n\lambda_i\sigma_i,\sum_{i=1}^n\lambda_i\tau_i\right) \le \sum_{i=1}^n\lambda_i\rho(\sigma_i,\tau_i)
\end{equation}
for every $n\in\nn$, $\sigma_1,\dots,\sigma_n,\tau_1,\dots\tau_n\in S(E)$ and $\lambda_1,\dots,\lambda_n\in[0,1]$ with $\sum_{i=1}^n\lambda_i=1$, and are also \emph{midpoint-balanced}, meaning that
\begin{equation} \label{eqn:midpt}
\frac{\sigma_1+\tau_2}{2} = \frac{\sigma_2+\tau_1}{2} \implies\rho(\sigma_1,\tau_1)=\rho(\sigma_2,\tau_2)
\end{equation}
for every $\sigma_1,\sigma_2,\tau_1,\tau_2 \in S(E)$. (Without assuming lower semicontinuity, we have $L_{\rho_L}=L^c$; see \cite[Theorem 4.2]{Rieffel:1999aa}.) In particular, if the starting metric $\rho$ has these properties, then $\rho_{L_\rho}=\rho$, so $\rho_{L_\rho}$ does indeed induce the $w^*$-topology on $S(E)$ and therefore $(E,L_\rho)$ is a compact quantum metric space. All in all, when we adapt Definition~\ref{def:cqms} to the tracial setting (Definition~\ref{def:qmcs}), we will add lower semicontinuity to the list of hypotheses.
\end{remark}

\subsection{Quantum metric Choquet simplices} \label{subsection:qmcs}

The key property exhibited by the compact quantum metric spaces $(E,L)$ of interest to us is \emph{Riesz interpolation}, which we recall means that, whenever $x_1,x_2,y_1,y_2$ are elements of $E$ with $x_i\le y_j$ for $i,j\in\{1,2\}$, there exists $z\in E$ such that $x_1,x_2\le z \le y_1,y_2$.

\begin{definition} \label{def:qmcs}
A \emph{quantum metric Choquet simplex} (QMCS) is a compact quantum metric space $(E,L)$ that is norm complete and satisfies the Riesz interpolation property, and such that $L$ is lower semicontinuous and densely finite.
\end{definition}

To explain our choice of terminology in Definition~\ref{def:qmcs}, we first recall the following.

\begin{definition} \label{def:recollection}~
\begin{enumerate}[1.]
\item A metrisable \emph{Choquet simplex} is a compact, convex subset $\Delta$ of a separable, Hausdorff locally convex space whose associated cone is a lattice. Equivalently (see, for example, \cite[Theorem \rm{II}.3.6]{Alfsen:1971hl} or \cite[Chapter 10]{Phelps:2001rz}), every point $x\in\Delta$ is represented by a unique Borel probability measure $\mu_x$ supported on the extremal boundary $\partial_e\Delta$ of $\Delta$ (which in the metrisable setting is Borel), that is, $\mu_x$ is the unique measure on $\Delta$ with $\mu_x(\Delta\setminus\partial_e\Delta)=0$ such that 
\[
f(x) = \int_{\partial_e\Delta} f\,d\mu_x \: \text{ for every } f\in\aff(\Delta).
\]
\item Two elements $a$ and $b$ of the positive cone $A_+$ of a $\cs$-algebra $A$ are \emph{Cuntz--Pedersen equivalent}, written $a\sim_{cp}b$, if there is a sequence $(x_n)_{n\in\nn}$ in $A$ such that $a=\sum_{n\in\nn} x_n^*x_n$ and $b=\sum_{n\in\nn} x_nx_n^*$. Equivalently, $a\sim_{cp}b$ if and only if $\tau(a)=\tau(b)$ for every lower semicontinuous trace $\tau$ on $A$, that is, for every lower semicontinuous, positive-linear map $\tau\colon A^+\to[0,\infty]$ satisfying the trace identity $\tau(x^*x)=\tau(xx^*)$. (This was proved in \cite[Theorem 7.5]{Cuntz:1979fv} assuming simplicity of $A$, and in full generality in \cite{Robert:2009aa}.) The set
\begin{equation} \label{eqn:A0}
A_0 = \left\{a-b \mid a,b\in A_+,\: a\sim_{cp} b\right\}
\end{equation}
is a closed subspace of $A_{sa}$, and the dual of the quotient $A^q=A_{sa}/A_0$ is isometrically isomorphic to the set of bounded tracial self-adjoint functionals on $A$ (see \cite[Propositions 2.7, 2.8]{Cuntz:1979fv}). If $A$ is algebraically simple (in particular, if $A$ is simple and unital) and $a,b\in A_+$, then $a\sim_{cp}b$ if and only if $a-b\in A_0$ (see \cite[Theorem 5.2]{Cuntz:1979fv}, and note that the example of the compact operators demonstrates that this is not true in general).

Since $A_0$ is an order ideal of $A_{sa}$ (that is, if $a,b\in A_0$ and $c\in A_{sa}$ satisfy $a\le c\le b$, then $c\in A_0$), $A^q$ is an ordered vector space when equipped with the quotient ordering (see \cite[Proposition \rm{II}.1.1]{Alfsen:1971hl}), which means that its positive cone is defined to be $A^q_+=q(A_+)$, where $q\colon A_{sa}\to A^q$ denotes the quotient map.
\end{enumerate}
\end{definition}

\begin{definition} \label{def:to}
We say that $A^q$ is \emph{tracially ordered} if $T(A)\ne\emptyset$ and
\begin{equation} \label{eqn:to}
A^q_+ = \{a+A_0 \mid a\in A_{sa},\: \tau(a)\ge0 \text{ for every } \tau\in T(A)\}.
\end{equation}
\end{definition}

Recall that a unital $\cs$-algebra $A$ has \emph{continuous trace} if, for every $a\in A$, the function $\pi \mapsto \tr(\pi(a))$ is finite and continuous on the spectrum $\widehat{A}$ of $A$  (see \cite[Definition 4.5.2]{Dixmier:1964rt}). Here, $\tr$ denotes the trace on $\mathcal{B}(H)$ and part of the definition is that $\pi(a)$ is a trace-class operator for every $\pi\in\widehat{A}$. In fact, by \cite[Proposition 4.5.3]{Dixmier:1964rt}, $A$ is liminal (that is, every irreducible representation is contained in the compact operators) and its spectrum is Hausdorff. By \cite[Theorem 10.5.4]{Dixmier:1964rt}, $A$ is isomorphic to the $\cs$-algebra of sections of the associated continuous field of matrix algebras over $X:=\widehat{A}$. We let $\lambda_A$ denote the corresponding multiplier map from $C(X)$ to the centre of the multiplier algebra $M(A) \cong A$. Fibrewise, $\lambda_A(f)(x)=f(x)\cdot1_{n_x}$, where $n_x$ is the size of the matrix fibre at $x\in X$.

\begin{proposition} \label{prop:to}
Let $A$ be a $\cs$-algebra with $T(A)\ne\emptyset$. Suppose that either
\begin{enumerate}[(i)]
\item \label{it:to1} $A$ is algebraically simple, or
\item \label{it:to2} $A$ is a unital continuous-trace $\cs$-algebra such that the multiplier map $\lambda_A$ induces an affine homeomorphism $T(A) \cong T(C(X))$ (equivalently, every $\tau\in T(A)$ is of the form $\tau(a) = \int_X \tr(a(x))\,d\mu(x)$ for some $\mu \in \prob(X)$, where $a(x)$ denotes the image of $a\in A$ in the fibre $A_x$ at $x\in X:=\widehat{A}$).
\end{enumerate}
Then, $A^q$ is tracially ordered.
\end{proposition}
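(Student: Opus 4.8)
The plan is to handle the trivial inclusion uniformly, reduce the substantive one to the single assertion that $q(A_+)$ is norm-closed in $A^q$ by a Hahn--Banach argument, and then verify this assertion in each of the two cases --- with case (ii) in fact admitting a direct and slightly sharper treatment.

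First I would record that $A^q_+ = q(A_+)$ is, in both cases, contained in the right-hand side of \eqref{eqn:to} (which in particular shows that side does not depend on the choice of representative): if $a \in A_+$ and $a' = a + (c-d)$ with $c, d \in A_+$ and $c \sim_{cp} d$, then $\tau(a') = \tau(a) + \tau(c) - \tau(d) = \tau(a) \ge 0$ for every $\tau \in T(A)$, since $\tau(c) = \tau(d)$ by the trace characterisation of $\sim_{cp}$ recorded in Definition~\ref{def:recollection}. For the reverse inclusion, it suffices to show that $q(A_+)$ is norm-closed in the real Banach space $A^q = A_{sa}/A_0$. Indeed, $q(A_+)$ is a convex cone, so if some $q(a)$ with $a \in A_{sa}$ lies outside it, Hahn--Banach separation provides $\phi \in (A^q)^*$ with $\phi(q(a)) < 0$ and $\phi(y) \ge 0$ for all $y \in q(A_+)$; by \cite[Propositions 2.7 and 2.8]{Cuntz:1979fv}, $\phi \circ q$ is a bounded tracial self-adjoint functional on $A$, it is positive (being nonnegative on $A_+$), hence a nonzero nonnegative multiple of some $\tau \in T(A)$, and then $\tau(a) < 0$. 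So any $a$ with $\tau(a) \ge 0$ for all $\tau \in T(A)$ must have $q(a) \in q(A_+)$.

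In case (i), the required closedness --- equivalently, the statement that the quotient ordering makes $A^q$ a complete order unit space with positive cone the ``tracial'' one --- is part of Cuntz and Pedersen's analysis of the (algebraically) simple case \cite{Cuntz:1979fv}; alternatively, one can argue directly by writing $a = a_+ - a_-$, noting that $\tau(a_-) \le \tau(a_+)$ for all $\tau \in T(A)$ (equivalently, for all lower semicontinuous traces), using the trace characterisation of Cuntz--Pedersen subequivalence in simple algebras to produce $c \in A_+$ with $c \le a_+$ and $c \sim_{cp} a_-$, and then observing that $a - (a_+ - c) = c - a_- \in A_0$ by \cite[Theorem 5.2]{Cuntz:1979fv}, so that $q(a) = q(a_+ - c) \in q(A_+)$. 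This is the step that genuinely uses algebraic simplicity --- the compact operators, as remarked in Definition~\ref{def:recollection}, show that some such hypothesis is needed --- and it is the main obstacle; everything else is soft.

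In case (ii), I would realise $A$ as the section algebra of the continuous field of matrix algebras $(A_x)_{x \in X}$, $A_x \cong M_{n_x}$, over the compact Hausdorff space $X = \widehat{A}$, and observe that applying the continuous-trace property to $1_A$ makes $x \mapsto \tr(1_A(x)) = n_x$ continuous, hence bounded above and bounded below by $1$. The affine homeomorphism $T(A) \cong T(C(X)) = \prob(X)$ afforded by $\lambda_A$ then identifies the bounded tracial self-adjoint functionals on $A$ with the functionals $a \mapsto \int_X \tr(a(x))\,d\nu(x)$, $\nu$ a finite signed measure on $X$; since $x \mapsto \tr(a(x))$ is continuous, this both exhibits $A_0$ as $\{c \in A_{sa} \mid \tr(c(x)) = 0 \text{ for all } x \in X\}$ and shows, by testing against point masses, that $\tau(a) \ge 0$ for all $\tau \in T(A)$ precisely when $\tr(a(x)) \ge 0$ for all $x \in X$. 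Given such an $a$, I would then take $b = \lambda_A(g) \in A_+$ with $g \in C(X)_+$ the (continuous) function $x \mapsto \tr(a(x))/n_x$; since $b(x) = g(x)\,1_{n_x}$, we have $\tr(b(x)) = \tr(a(x))$ for every $x$, hence $a - b \in A_0$ and $q(a) = q(b) \in q(A_+)$. This settles case (ii) outright --- it even identifies $q(A_+)$ with $\{q(a) \mid \tr(a(x)) \ge 0 \text{ for all } x \in X\}$ --- so the Hahn--Banach detour is unnecessary there; the only points requiring care are the continuity of $x \mapsto n_x$ and the description of $A_0$, both of which flow from the continuous-trace hypothesis together with the identification $T(A) \cong \prob(X)$.
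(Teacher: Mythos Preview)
Your proof is correct and, stripped of the Hahn--Banach scaffolding (which you yourself note is unnecessary for case~(ii)), follows essentially the same route as the paper: case~(i) is deferred to Cuntz--Pedersen, and case~(ii) is handled by pushing $a$ to the explicit positive element $b=\lambda_A(g)$ with the same fibrewise trace. The only cosmetic difference is normalisation---the paper treats $\tr$ as the tracial state on each fibre and takes $g=\widehat{a}$, whereas you take $\tr$ unnormalised and compensate with $g(x)=\tr(a(x))/n_x$; both yield $\tr(b(x))=\tr(a(x))$ and hence $a-b\in A_0$.
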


\begin{proof}
For \eqref{it:to1}, see \cite[Corollary 6.4]{Cuntz:1979fv}. For \eqref{it:to2}, let $a\in A_{sa}$ such that $\tau(a)\ge0$ for every $\tau\in T(A)$. Equivalently, $\widehat{a}(x) := \tr(a(x)) \ge 0$ for every $x\in X$. Let $b=\lambda_A(\widehat{a}) \in A_+$. Then, for every $\tau = \int_X (\cdot) \,d\mu \in T(A)$ we have
\[
\tau(b) = \int_X \tr(b(x))\,d\mu(x) = \int_X \tr\left(\widehat{a}(x)\cdot 1\right)\,d\mu(x) = \int_X \widehat{a}(x)\,d\mu(x) = \tau(a).
\]
It follows that $a\sim_{cp}b$ and, since $b\ge0$, that $a+A_0=b+A_0\in A^q_+$.
\end{proof}

Note that $T(A)\ne\emptyset$ holds for unital, exact, stably finite $A$ (see \cite[Theorem 1.1.4]{Rordam:2002yu}), condition \eqref{it:to1} of Proposition~\ref{prop:to} holds if $A$ is simple and unital, and condition \eqref{it:to2} is satisfied by, for example, the dimension drop algebras that are the building blocks of the Jiang--Su algebra (see \cite{Jiang:1999hb} and also Example~\ref{ex:cqmcs} below). 

\begin{proposition} \label{prop:quotient}
Let $A$ be a unital $\cs$-algebra such that $A^q_+$ is tracially ordered. Then, $(A^q,q(1))$ is a complete order unit space and the induced order unit norm on $A^q$ agrees with the quotient norm.
\end{proposition}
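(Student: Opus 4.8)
The plan is to verify the three axioms of a complete order unit space for $(A^q, q(1))$ in turn: that $q(1)$ is an order unit, that the Archimedean property holds, and that the resulting order unit norm coincides with the quotient norm inherited from $A_{sa}$; completeness will then follow because $A_0$ is closed in $A_{sa}$.

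First I would check that $q(1)$ is an order unit. Given $x = q(a) \in A^q$ with $a \in A_{sa}$, pick $r > \|a\|$, so that $r1 - a \in A_+$ and hence $r\cdot q(1) - x = q(r1 - a) \in A^q_+$ by definition of the quotient ordering. Conversely, for the Archimedean property, suppose $x = q(a) \le r\cdot q(1)$ for every $r \in \rr_+$. Using the tracial description \eqref{eqn:to} of $A^q_+$, the inequality $r\cdot q(1) - x \in A^q_+$ translates into $\tau(r1 - a) \ge 0$, i.e.\ $\tau(a) \le r$, for every $\tau \in T(A)$ and every $r > 0$; letting $r \to 0^+$ gives $\tau(a) \le 0$ for all $\tau \in T(A)$, and then \eqref{eqn:to} (with $-a$ in place of $a$) yields $-x = q(-a) \in A^q_+$, that is, $x \le 0$. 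This is exactly the place where tracial orderedness is doing the real work: it converts the order-theoretic hypothesis into a pointwise statement about traces that can be manipulated analytically, and without it the Archimedean property could genuinely fail.

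Next I would identify the order unit norm \eqref{eqn:ounorm} of $A^q$ with its quotient norm. For $x = q(a)$, the order unit norm is $\inf\{ s > 0 \mid -s\cdot q(1) \le x \le s\cdot q(1)\}$. Unwinding via \eqref{eqn:to}, the condition $-s\cdot q(1) \le x \le s\cdot q(1)$ is equivalent to $-s \le \tau(a) \le s$ for every $\tau \in T(A)$, so the order unit norm equals $\sup\{|\tau(a)| \mid \tau \in T(A)\}$. On the other hand, by \cite[Propositions 2.7, 2.8]{Cuntz:1979fv} the quotient norm on $A^q = A_{sa}/A_0$ is dual to the norm on the space of bounded tracial self-adjoint functionals, and for a unital $\cs$-algebra this gives $\|q(a)\|_{A^q} = \sup\{|\tau(a)| \mid \tau \in T(A)\}$ as well (tracial states being the norm-one positive tracial functionals, and the sup over the unit ball of tracial functionals reducing to the sup over $\pm$ convex combinations of traces). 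Hence the two norms agree.

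Finally, completeness: $A_0$ is a closed subspace of the Banach space $A_{sa}$ (this is recorded in Definition~\ref{def:recollection}(2)), so the quotient $A^q = A_{sa}/A_0$ is complete in the quotient norm, which we have just shown is the order unit norm. I expect the main obstacle to be purely bookkeeping — namely, being careful that the quotient ordering and the tracial ordering \eqref{eqn:to} really do agree (this is the hypothesis), and correctly matching the dual description of the quotient norm from \cite{Cuntz:1979fv} with the supremum over $T(A)$; once those identifications are in place, each axiom is a short computation.
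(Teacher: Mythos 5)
Your proof is correct, and its skeleton (Archimedean property via the tracial ordering, then identification of the two norms, then completeness from closedness of $A_0$) matches the paper's. The one step where you take a genuinely different route is the inequality $\|q(a)\|_q\le\|q(a)\|_0$: the paper deduces it from the existence, for each $\varepsilon>0$, of a representative $b\in a+A_0$ with $\|b\|<\sup_{\tau\in T(A)}|\tau(a)|+\varepsilon$ (citing \cite[Proposition 2.1]{Carrion:wz}), whereas you compute the quotient norm by Hahn--Banach duality, using the Cuntz--Pedersen identification of $(A^q)^*$ with the bounded self-adjoint tracial functionals and then reducing the supremum over the dual unit ball to a supremum over $T(A)$. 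Both arguments ultimately establish the same identity $\|q(a)\|_q=\sup_{\tau\in T(A)}|\tau(a)|$; yours avoids the external ``small representative'' lemma at the cost of one extra fact you pass over quickly, namely that the Jordan decomposition $\phi=\phi_+-\phi_-$ of a self-adjoint \emph{tracial} functional has tracial positive and negative parts (so that $\phi_\pm/\|\phi_\pm\|\in T(A)$ and the reduction to $T(A)$ goes through). That fact is standard --- by uniqueness of the Jordan decomposition, $\phi\circ\ad(u)=\phi$ forces $\phi_\pm\circ\ad(u)=\phi_\pm$ for every unitary $u$ --- but it should be stated, since without it the supremum over the unit ball of tracial functionals is not obviously controlled by the supremum over tracial states. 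You also supply two points the paper leaves implicit (that $q(1)$ is an order unit, and the completeness assertion), both correctly.
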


\begin{proof}
To show that $(A^q,e)$ is an order unit space (where $e=q(1)$), we just have to verify the Archimedean property. But if $x$ is an element of $A^q$ such that $x\le re$ for every $r>0$, then $\tau(x)\le0$ for every $\tau\in T(A)$, so by the assumption that $A^q$ is tracially ordered, we must have $x\le0$.

To prove that the order unit norm $\|\cdot\|_0$ agrees with the quotient norm $\|\cdot\|_q$, we must show that $\|q(a)\|_q\le\|q(a)\|_0$ for every $a\in A_{sa}$ (the reverse inequality being automatic---see \cite[Proposition \rm{II}.1.6]{Alfsen:1971hl}). Suppose that $\|q(a)\|_0\le r$, that is, $-re\le q(a) \le re$. Then, $\sup_{\tau\in T(A)}|\tau(a)|\le r$ and so for any $\varepsilon>0$ there exists $b\in A_{sa}$ such that $a+A_0=b+A_0$ and $\|b\|<r+\varepsilon$ (see \cite[Proposition 2.1]{Carrion:wz}). This implies that $\|q(a)\|_q = \inf\{\|a-c\| \mid c\in A_0\} \le r$, and therefore that $\|q(a)\|_q\le\|q(a)\|_0$.
\end{proof}

Finally, we are in a position to justify our terminology.

\begin{theorem} \label{thm:cqmcs}
The following are equivalent for a complete order unit space $E$:
\begin{enumerate}[(i)]
\item \label{it:qmcs1} $E$ is separable and has the Riesz interpolation property;
\item \label{it:qmcs2} $E$ is isomorphic to $\aff(\Delta)$ for some metrisable Choquet simplex $\Delta$;
\item \label{it:qmcs3} $E$ is isomorphic to the Cuntz--Pedersen quotient $A^q$ for some separable, unital $\cs$-algebra $A$ such that $A^q$ is tracially ordered.
\end{enumerate}
\end{theorem}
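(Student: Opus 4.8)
The plan is to prove the cycle of implications $\eqref{it:qmcs1}\Rightarrow\eqref{it:qmcs2}\Rightarrow\eqref{it:qmcs3}\Rightarrow\eqref{it:qmcs1}$, leaning on classical results from the Alfsen–Effros theory of function systems together with the representation theory already recalled in the excerpt.

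For $\eqref{it:qmcs1}\Rightarrow\eqref{it:qmcs2}$, I would use the standard duality between complete order unit spaces and compact convex sets: given such an $E$, set $\Delta=S(E)$, the state space, which is compact convex in the $w^*$-topology, and metrisable precisely because $E$ is separable. The canonical evaluation map $E\to\aff(\Delta)$, $x\mapsto\widehat{x}$, is a unital order embedding, and it is surjective by the Kadison representation theorem (every continuous affine function on the state space of a complete order unit space is the evaluation of a unique element), hence an order unit space isomorphism. It remains to see that $\Delta$ is a Choquet simplex; this is exactly the point where Riesz interpolation enters — the Riesz interpolation property of $E=\aff(\Delta)$ is equivalent to the cone over $\Delta$ being a lattice, i.e.\ to $\Delta$ being a Choquet simplex. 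I would cite the relevant statement from Alfsen (or Effros) for this equivalence.

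For $\eqref{it:qmcs2}\Rightarrow\eqref{it:qmcs3}$, I would invoke the known realisation of $\aff(\Delta)$, for $\Delta$ a metrisable Choquet simplex, as the trace-affine-function space of a separable unital $\cs$-algebra. The cleanest route is to take a unital simple separable $\cs$-algebra $A$ with $T(A)\cong\Delta$ affinely homeomorphically — such an $A$ exists by known range-of-invariant results (e.g.\ a suitable simple AF or $\js$-stable algebra; this is also subsumed by Theorem~\ref{thm1}, which produces a classifiable, in particular simple unital, $A$ with $T(A)$ the given Bauer simplex, and more generally by the range results in the literature for arbitrary metrisable Choquet simplices). Since $A$ is simple and unital it is algebraically simple, so $A^q$ is tracially ordered by Proposition~\ref{prop:to}\eqref{it:to1}, and the dual of $A^q$ is the space of bounded self-adjoint tracial functionals, whose state space is $T(A)\cong\Delta$; applying $\eqref{it:qmcs1}\Leftrightarrow\eqref{it:qmcs2}$ (or a direct Kadison-type argument) identifies $A^q$ with $\aff(T(A))\cong\aff(\Delta)\cong E$.

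For $\eqref{it:qmcs3}\Rightarrow\eqref{it:qmcs1}$: separability of $A^q$ is immediate as it is a quotient of the separable $A_{sa}$. For Riesz interpolation, I would use that $A^q$ is (by the argument above, or directly) isomorphic as an ordered vector space to $\aff(T(A))$ — indeed, $T(A)$ is a nonempty compact convex metrisable set, and tracial orderedness says exactly that the order on $A^q$ is the pointwise order of the evaluations $\widehat{a}$ on $T(A)$, so $a\mapsto\widehat{a}$ is an order isomorphism onto a uniformly closed (by Proposition~\ref{prop:quotient}, the norm is complete) unital subspace of $\aff(T(A))$ that separates points and contains the constants, hence is all of $\aff(T(A))$ by Stone–Weierstrass for affine functions / Kadison. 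Then $T(A)$ is automatically a Choquet simplex: the lattice cone condition follows because $A^q\cong\aff(T(A))$ must satisfy Riesz interpolation — but wait, this is what we want to prove, so instead I would argue directly that $A^q$ has interpolation. The cleanest way is: bounded self-adjoint tracial functionals on $A$ form the dual of $A^q$, and this dual, ordered by its positive cone, is a lattice (the order structure on traces is lattice-ordered — the trace functionals on a $\cs$-algebra form a base-norm space with lattice-ordered predual, by Cuntz–Pedersen / Pedersen's book), so by Alfsen–Effros duality the predual $A^q$ has the Riesz interpolation property.

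The main obstacle I anticipate is bookkeeping around the correct classical citations and the direction of the Riesz-interpolation / lattice-cone equivalence: it must be applied to the pair (order unit space $A^q$, its dual of tracial functionals) rather than circularly. Concretely, the crux is establishing that the cone of bounded self-adjoint tracial functionals on a unital $\cs$-algebra is a lattice — this is the noncommutative input that makes the whole picture work, and it is exactly the statement that $T(A)$ is a Choquet simplex (a classical fact, e.g.\ from the Cuntz–Pedersen analysis or Sakai/Pedersen), from which interpolation for $A^q$ follows by Alfsen's duality theorem. Once that is in place the three implications are routine applications of the Kadison representation theorem for order unit spaces.
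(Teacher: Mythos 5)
Your proposal is correct and uses essentially the same ingredients as the paper's proof: Alfsen's equivalence between Riesz interpolation of $\aff(\Delta)$ and $\Delta$ being a Choquet simplex, the Cuntz--Pedersen identification of $(A^q)^*$ with the bounded self-adjoint tracial functionals combined with Kadison's representation theorem and the classical fact that $T(A)$ is a metrisable Choquet simplex, and the range-of-invariant realisation of any metrisable Choquet simplex as $T(A)$ for a simple separable unital $\cs$-algebra. The only difference is cosmetic: you arrange the argument as a cycle $\eqref{it:qmcs1}\Rightarrow\eqref{it:qmcs2}\Rightarrow\eqref{it:qmcs3}\Rightarrow\eqref{it:qmcs1}$, whereas the paper proves $\eqref{it:qmcs1}\Leftrightarrow\eqref{it:qmcs2}$ directly by citation and then $\eqref{it:qmcs3}\Rightarrow\eqref{it:qmcs2}\Rightarrow\eqref{it:qmcs3}$.
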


\begin{proof}
For the equivalence of \eqref{it:qmcs1} and \eqref{it:qmcs2}, see \cite[Corollary \rm{II}.3.11]{Alfsen:1971hl}. To complete the proof of the theorem, we will show that \eqref{it:qmcs2} is equivalent to \eqref{it:qmcs3}.

First, suppose that \eqref{it:qmcs3} holds. By Proposition~\ref{prop:quotient}, $A^q$ is a complete order unit space when equipped with the quotient ordering, and the order unit norm is equal to the quotient norm. By \cite[Propositions 2.7, 2.8]{Cuntz:1979fv}, the dual of $A^q$ is isometrically isomorphic to the Banach space of tracial bounded linear functions on $A$. Under this isomorphism, the state space of the order unit space $A^q$ (that is, the set of unital bounded linear functionals $\varphi\colon A^q\to\rr$ with $\|\varphi\|=1$) corresponds to the set $T(A)$ of tracial states on $A$. By Kadison's representation theorem \cite[Theorem \rm{II}.1.8]{Alfsen:1971hl}, $A^q$ is therefore isomorphic as an order unit space to $\aff(T(A))$ (which is equipped with the ordinary ordering: $f\ge0$ if and only if $f(\tau)\ge0$ for every $\tau\in T(A)$). Since $T(A)$ is a metrisable Choquet simplex (see \cite{Thoma:1964aa} or \cite[Theorem 3.1]{Pedersen:1969kq}), this gives us \eqref{it:qmcs2}.

Suppose conversely that \eqref{it:qmcs2} holds. Let $A$ be a simple, separable, unital $\cs$-algebra such that the metrisable Choquet simplex $\Delta$ is affinely homeomorphic to $T(A)$. (Candidates for $A$ exist within various classes of interest to the Elliott classification programme; see, for example, \cite[Theorem 3.10]{Blackadar:1980zr}, \cite[Theorem 3.9]{Thomsen:1994qy}, \cite[Theorem 4.5]{Jiang:1999hb} and \cite[Theorem 14.10]{Gong:2020ud}.) Since $A^q$ is tracially ordered (by Proposition~\ref{prop:to}\eqref{it:to1}), it follows exactly as above that $A^q$ is isomorphic as an order unit space to $\aff(\Delta)$, which by assumption is isomorphic to $E$, so we have \eqref{it:qmcs3}.
\end{proof}

In light of Theorem~\ref{thm:cqmcs}\eqref{it:qmcs3}, we do not lose any generality in restricting attention from general order unit spaces to $\cs$-algebras, and indeed this is our primary source of examples. We formalise this as follows (cf.\ \cite[Definition 2.3]{Li:2009aa}).

\begin{definition} \label{def:cqmcs}
A \emph{$\cs$-algebraic quantum metric Choquet simplex} ($\cs$-QMCS) is a separable, unital $\cs$-algebra $A$ equipped with a seminorm $L\colon A\to[0,\infty]$ such that:
\begin{enumerate}[1.]
\item $A^q$ is tracially ordered;
\item $L$ is self-adjoint (that is, $L(a^*)=L(a)$ for every $a\in A$), lower semicontinuous and densely finite;
\item $\ker L=\Span_\cc(\{1\}\cup A_0)$;
\item the metric defined by
\[
\rho_L(\sigma,\tau) = \sup\{|\sigma(a)-\tau(a)| \mid a\in A_{sa},\:L(a)\le 1\}
\]
induces the $w^*$-topology on $T(A)$.
\end{enumerate}
\end{definition}

\begin{remark} \label{rem:cqmcs}
There are some practical aspects of Definition~\ref{def:cqmcs} that are worth commenting on.
\begin{enumerate}[1.]
\item By Proposition~\ref{prop:quotient}, $A^q$ is a complete order unit space when equipped with the quotient ordering and norm. The assumptions of lower semicontinuity of $L$ and density of $E=L^{-1}([0,\infty))$ as a subset of $A_{sa}$ moreover ensure that $E/A_0$ is a closed compact quantum metric space in the sense of Rieffel (see Definition~\ref{def:cqms} and Remark~\ref{rem:lsc}) whose norm completion is $A^q$. Going forward, this will help us to transfer the theory and structure of compact quantum metric spaces to the setting of tracial $\cs$-algebras (see, for example, Theorem~\ref{thm:nucleus}).
\item Occasionally, for example in Section~\ref{section:qcp}, it is useful to allow for nonunital $\cs$-algebras $A$. Of particular interest to us are certain stably projectionless classifiable $\cs$-algebras, namely the $\js_0$-stable ones mentioned in Section~\ref{section:intro} and discussed further in Section~\ref{subsection:range}. In general, such an $A$ can admit unbounded traces. This is not an issue in Section~\ref{section:qcp} because we will use $\cs$-algebras $A$ that have `continuous scale' in the sense of \cite[Section 5]{Elliott:2020vp}, which is a property that for simple, $\js$-stable $\cs$-algebras $A$ is equivalent to \emph{algebraic} simplicity of $A$ and compactness of $T(A)$ (see \cite[Theorem 5.3]{Elliott:2020vp}). In particular, for such an $A$, all traces are bounded (because $A$ coincides with its Pedersen ideal) and $A^q$ still has the structure of a complete order unit space isomorphic to $\aff(T(A))$. Note in particular that, as in \cite[Theorem 9.3]{Lin:2007qf}, there is a positive element $e\in A$ with $\tau(e)=1$ for every $\tau\in T(A)$, and then $q(e)$ serves as an order unit. In these well-behaved cases, we say that $(A,L)$ is a nonunital $\cs$-QMCS if $L\colon A\to[0,\infty]$ is a seminorm satisfying the requirements of Definition~\ref{def:cqmcs} with the unit of $A$ replaced by such an $e$.
\end{enumerate}
\end{remark}

The following version of \cite[Lemma 4.1]{Li:2006aa} provides a basis for metric analysis in the set $\emb(A,B)$ of unital embeddings of a quantum metric Choquet simplex $A$ into unital $\cs$-algebras $B$ (or rather, in the set of approximate unitary equivalence classes $\emb(A,B)/\sim_{au}$).

\begin{theorem} \label{thm:nucleus}
Let $(A,L)$ be a $\cs$-algebraic quantum metric Choquet simplex. Then, for every $r\ge r_{(A,L)}$, there exists a compact set $\mathcal{D}_r(A) \subseteq A_{sa}$ such that
\[
q(\mathcal{D}_r(A)) = \{x\in A^q \mid L(x)\le 1,\|x\|\le r\}
\]
and
\begin{equation} \label{eqn:nucleus}
\mathcal{D}_r(A) + \rr1 + A_0 = \{a\in A_{sa} \mid L(a)\le 1\}.
\end{equation}
We call such a set $\mathcal{D}_r(A)$ a \emph{nucleus of $(A,L)$}.
\end{theorem}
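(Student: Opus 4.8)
The plan is to transfer the question to the Cuntz--Pedersen quotient $A^q$, where Rieffel's compactness criterion for compact quantum metric spaces applies, and then to lift the relevant compact subset back to $A_{sa}$ through the quotient map $q\colon A_{sa}\to A^q$.

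The first step is to record that $(A^q,\bar L)$ is a compact quantum metric space, where $\bar L$ is the seminorm induced by $L$. Since $A_0\subseteq\ker L$ and $L$ is a seminorm, $L$ is constant on cosets of $A_0$ in $A_{sa}$, so $\bar L(q(a)):=L(a)$ is well defined; self-adjointness of $L$ together with condition~3 of Definition~\ref{def:cqmcs} gives $\ker\bar L=\rr q(1)=\rr e$. By Proposition~\ref{prop:quotient}, $(A^q,e)$ is a complete order unit space whose order-unit norm coincides with the quotient norm, so $q$ is an open linear surjection of real Banach spaces; lifting norm-convergent sequences through $q$ then shows that $\bar L$ inherits lower semicontinuity from $L$. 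Finally, states on the order unit space $A^q$ are exactly the tracial states of $A$ (via the Cuntz--Pedersen duality of Definition~\ref{def:recollection}), and $\rho_{\bar L}=\rho_L$ under this identification, so condition~4 of Definition~\ref{def:cqmcs} is precisely the statement that $\rho_{\bar L}$ metrises the $w^*$-topology on $S(A^q)=T(A)$. Thus $(A^q,\bar L)$ satisfies Definition~\ref{def:cqms}, and by the conventions of Notation~\ref{notation}, $r_{(A,L)}$ is the radius of its state space.

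Next I would apply Rieffel's criterion, in the lower semicontinuous (possibly infinite-valued) formulation of \cite{Rieffel:1999aa,Kerr:2009aa}: for every $r\ge r_{(A,L)}$ the set
\[
\mathcal{K}_r := \{x\in A^q \mid \bar L(x)\le 1,\ \|x\|\le r\}
\]
is totally bounded, and moreover every $x\in A^q$ with $\bar L(x)\le 1$ satisfies $\inf_{t\in\rr}\|x-te\|\le r_{(A,L)}$, so that $\mathcal{K}_r+\rr e=\{x\in A^q\mid\bar L(x)\le 1\}$. Since $\bar L$ is lower semicontinuous, $\mathcal{K}_r$ is closed, hence compact because $A^q$ is norm complete.

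The only genuinely non-formal step is the lift, and it is worth emphasising why it is needed: one cannot take $\{a\in A_{sa}\mid L(a)\le 1,\ \|a\|\le r\}$ itself as the nucleus, since this set contains the entire $r$-ball of the (typically infinite-dimensional) space $A_0$ and so is far from compact. Instead I would invoke the Bartle--Graves theorem to fix a continuous section $s\colon A^q\to A_{sa}$ of $q$ (which may moreover be taken positively homogeneous with $\|s(x)\|\le C\|x\|$, should one also want a norm bound on the nucleus) and set $\mathcal{D}_r(A):=s(\mathcal{K}_r)$; alternatively, one lifts a null sequence whose closed absolutely convex hull contains $\mathcal{K}_r$ to a null sequence in $A_{sa}$, forms its closed absolutely convex hull there, and intersects with $q^{-1}(\mathcal{K}_r)$. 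Either way $\mathcal{D}_r(A)$ is compact, $q(\mathcal{D}_r(A))=\mathcal{K}_r=\{x\in A^q\mid L(x)\le 1,\ \|x\|\le r\}$, and since $\ker(q|_{A_{sa}})=A_0$ and $L=\bar L\circ q$ on $A_{sa}$,
\[
\mathcal{D}_r(A)+\rr 1+A_0 = q^{-1}\big(q(\mathcal{D}_r(A))+\rr e\big) = q^{-1}(\mathcal{K}_r+\rr e) = q^{-1}\big(\{\bar L\le 1\}\big) = \{a\in A_{sa}\mid L(a)\le 1\},
\]
which is \eqref{eqn:nucleus}. I expect the fiddliest parts of a clean write-up to be the descent of lower semicontinuity to $\bar L$ and keeping straight which radius $r_{(A,L)}$ refers to; the substance of the statement is entirely in the observation that the nucleus must be a carefully chosen norm-compact transversal to $A_0$ inside the Lip-ball, rather than an honest ball of $A_{sa}$.
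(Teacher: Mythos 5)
Your proposal is correct and follows essentially the same route as the paper: pass to the Cuntz--Pedersen quotient, invoke Rieffel's total-boundedness criterion (via \cite[Proposition 2.3]{Li:2006aa}) to get the compact set $\mathcal{D}^q_r(A)$ together with the decomposition $\mathcal{D}^q_r(A)+\rr q(1)=\{q(a)\mid L(a)\le1\}$, and lift by a Bartle--Graves continuous section of $q$. Your extra remarks on why $\bar L$ descends lower semicontinuously and why the Lip-ball in $A_{sa}$ itself cannot serve as the nucleus are accurate elaborations of points the paper leaves implicit, not a different argument.
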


\begin{proof}
The Bartle--Graves theorem (noted by Michael as a corollary to \cite[Theorem 3.2'']{Michael:1956aa}) says that every continuous linear surjection between Banach spaces admits a continuous right inverse. Applying this to the quotient map $q \colon A_{sa} \to A^q$, there is a continuous function $f\colon A^q \to A_{sa}$ such that $f(x) \in q^{-1}(x)$ for every $x \in A^q$. Let $r\ge r_{(A,L)}:= \frac{1}{2}\max\{\rho_L(\sigma,\tau) \mid \sigma,\tau\in T(A)\}$. By \cite[Proposition 2.3]{Li:2006aa} (a restatement of \cite[Theorem 1.9]{Rieffel:1998ww}), the set $\mathcal{D}^q_r(A):=\{x\in A^q \mid L(x)\le 1,\: \|x\|_q\le r\}$ is totally bounded, hence compact by completeness of $A^q$, and moreover
\[
\mathcal{D}^q_r(A)+\rr q(1)=\{q(a) \mid a\in A_{sa}, L(a)\le1\}.
\]
The image $f(\mathcal{D}^q_r(A))$ of $\mathcal{D}^q_r(A)$ under Michael's selection is the stipulated compact set $\mathcal{D}_r(A)$.
\end{proof}

\subsection{Quantum metric Bauer simplices} \label{subsection:qmbs}

The classical example of a compact quantum metric space (and indeed a quantum metric Choquet simplex) is the commutative one. If $(X,\rho)$ is a compact metric space, then the associated Lipschitz seminorm
\[
L_\rho(f) = \sup\left\{\frac{|f(x)-f(y)|}{\rho(x,y)} \mid x\ne y \in X\right\}
\]
is a Lip-norm on $C(X)_{sa}$, and $(C(X),L_\rho)$ is a $\cs$-QMCS.

One special feature of this example is that the trace space is a \emph{Bauer simplex}, that is, a Choquet simplex $\Delta$ whose boundary $X=\partial_e\Delta$ is compact. In this situation, the map that sends a point $\tau$ to its representing measure $\mu_\tau$ is an affine homeomorphism from $\Delta$ to $\prob(\partial_e\Delta)$ (see Section~\ref{subsection:qmcs} and \cite[Theorem \rm{II}.4.1]{Alfsen:1971hl}). In fact, for $E=C(X)_{sa}$ and $\Delta=S(E)$, this map is an \emph{isometry} between $(S(E),\rho_{L_\rho})$ and $(\prob(\partial_eS(E)),W_\rho)$, where $\rho_{L_\rho}$ is defined by \eqref{eqn:rho}, that is,
\[
\rho_{L_\rho}(\sigma,\tau) = \sup\{|\sigma(x)-\tau(x)| \mid x\in E,\:L_\rho(x)\le 1\}
\]
and $W_\rho$ is the \emph{Wasserstein} (or \emph{Monge--Kantorovich} or \emph{Kantorovich--Rubinstein}) metric
\begin{equation} \label{eqn:wass}
W_\rho(\mu,\nu) = \sup\left\{\left|\int_Xf\,d\mu - \int_Xf\,d\nu\right| \mid f\in\lip(X,\rho)\right\}
\end{equation}
associated with $\rho$. As this sort of structure is also the basis for many of our noncommutative examples, we formalise it as follows.

\begin{definition} \label{def:qmbs}
A \emph{quantum metric Bauer simplex} (QMBS) is a quantum metric Choquet simplex $(E,L)$ whose state space $S(E)$ is a Bauer simplex with
\begin{equation} \label{eqn:isometry}
\rho_L(\sigma,\tau) = W_{\rho_L}(\mu_\sigma,\mu_\tau) \:\text{ for every } \sigma,\tau\in S(E).
\end{equation}
A \emph{$\cs$-algebraic quantum metric Bauer simplex} ($\cs$-QMBS) is a $\cs$-algebraic quantum metric Choquet simplex $A$ such that $T(A)$ is a Bauer simplex and such that \eqref{eqn:isometry} holds in $T(A)$.
\end{definition}

\begin{proposition} \label{prop:dirichlet}
Let $E$ be a separable, complete order unit space and $L\colon E \to [0,\infty]$ a densely finite, lower semicontinuous Lip-norm. Then, $(E,L)$ is a quantum metric Bauer simplex if and only if
\begin{enumerate}[(i)]
\item $E$ is a vector lattice and
\item every Lipschitz function $f$ from $(\partial_e(S(E)),\rho_L)$ to $\rr$ admits a (unique) continuous affine extension to $(S(E),\rho_L)$ without increase of uniform norm or Lipschitz seminorm.
\end{enumerate}
In this case,
\begin{equation} \label{eqn:dirichlet}
L(x) = L_{\rho_L}\left(\widehat{x}|_{\partial_eS(E)}\right) \quad \text{for every } x\in E,
\end{equation}
where $\widehat{x}\in\aff(S(E))$ denotes the function $\sigma\mapsto\sigma(x)$.
\end{proposition}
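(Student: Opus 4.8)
The plan is to establish the two implications separately. The external input I would rely on is the classical Bauer characterisation: for a metrisable Choquet simplex $\Delta$, the space $\aff(\Delta)$ is a vector lattice precisely when $\Delta$ is a Bauer simplex, in which case restriction to $\partial_e\Delta$ is an order-unit isomorphism $\aff(\Delta)\cong C(\partial_e\Delta)$ and the barycentre map is an affine homeomorphism $\Delta\cong\prob(\partial_e\Delta)$ (see \cite{Alfsen:1971hl}; cf.\ Theorem~\ref{thm:cqmcs} for the interpolation/simplex correspondence). Throughout I would set $X=\partial_e(S(E))$, identify $E$ with $\aff(S(E))$ via $x\mapsto\widehat x$ (Kadison), and record that, $L$ being a lower semicontinuous Lip-norm, $L=L^c=L_{\rho_L}$ (Remark~\ref{rem:lsc}(2)); thus $L(x)$ is exactly the Lipschitz constant of $\widehat x$ on $(S(E),\rho_L)$, and $\sigma(x)=\int_X\widehat x|_X\,d\mu_\sigma$ for the representing measure $\mu_\sigma$ of $\sigma\in S(E)$.

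For the ``only if'' direction, assume $(E,L)$ is a QMBS. Then $S(E)$ is Bauer, so $E\cong C(X)$ is a vector lattice, giving (i). For (ii): any $f\in C(X)$ extends to the continuous affine $\widetilde f\colon\mu\mapsto\int_X f\,d\mu$ on $\prob(X)\cong S(E)$, with $\|\widetilde f\|_\infty=\|f\|_\infty$, and uniqueness is forced because a continuous affine function on a simplex is determined by its boundary values. If moreover $L_{\rho_L}(f)=c<\infty$ then, using the QMBS identity \eqref{eqn:isometry},
\[
|\widetilde f(\sigma)-\widetilde f(\tau)|=\Big|\int_X f\,d\mu_\sigma-\int_X f\,d\mu_\tau\Big|\le c\,W_{\rho_L}(\mu_\sigma,\mu_\tau)=c\,\rho_L(\sigma,\tau),
\]
so $L_{\rho_L}(\widetilde f)\le c$, which is (ii); taking $f=\widehat x|_X$ and invoking uniqueness gives \eqref{eqn:dirichlet} (the case $L(x)=\infty$ being trivial).

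For the ``if'' direction, assume (i) and (ii). A vector lattice has Riesz interpolation (take $z=x_1\vee x_2$), so $(E,L)$ is a QMCS, $S(E)$ is a metrisable Choquet simplex, and by the Bauer characterisation it is in fact Bauer, hence $S(E)\cong\prob(X)$. The heart of the argument is the identification
\[
\{\widehat x|_X \mid x\in E,\ L(x)\le1\}=\lip(X,\rho_L).
\]
Here $\subseteq$ is immediate (restrict a $1$-Lipschitz affine function to $X$), and for $\supseteq$ I would feed $g\in\lip(X,\rho_L)$ into hypothesis (ii) to obtain a continuous affine extension of Lipschitz seminorm $\le1$, which under the Kadison identification is $\widehat x$ for some $x\in E$ with $L(x)\le1$ and $\widehat x|_X=g$. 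Granting the identification, for $\sigma,\tau\in S(E)$ one computes
\[
\rho_L(\sigma,\tau)=\sup_{L(x)\le1}|\sigma(x)-\tau(x)|=\sup_{g\in\lip(X,\rho_L)}\Big|\int_X g\,d\mu_\sigma-\int_X g\,d\mu_\tau\Big|=W_{\rho_L}(\mu_\sigma,\mu_\tau),
\]
so $(E,L)$ is a QMBS, and \eqref{eqn:dirichlet} follows as above.

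The step I expect to be the main obstacle is the inclusion $\supseteq$ in the displayed identification: one must promote a Lipschitz function on the extreme boundary $X$ to a bona fide element of $E$ with controlled $L$-seminorm — this is exactly what (ii) is designed to provide — and then check that the element produced restricts back to the original $g$; this last point, and formula \eqref{eqn:dirichlet}, is where uniqueness of continuous affine extensions (i.e.\ the simplex structure and representing measures) enters. The only other non-elementary ingredient is the classical Bauer lattice characterisation, which I would quote rather than reprove.
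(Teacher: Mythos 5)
Your proposal is correct and follows essentially the same route as the paper: both directions rest on Alfsen's lattice characterisation of Bauer simplices together with the unique affine extension from the boundary via representing measures, and your displayed identification $\{\widehat x|_X\mid L(x)\le1\}=\lip(X,\rho_L)$ is exactly the computation the paper uses to pass between $\rho_L$ and $W_{\rho_L}$. The derivation of \eqref{eqn:dirichlet} from uniqueness of the extension also matches the paper's argument.
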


\begin{proof}
By \cite[Theorem \rm{II}.4.1]{Alfsen:1971hl}, the compact, convex set $S(E)$ is a Bauer simplex if and only if $\aff(S(E))$, which by Kadison's representation theorem \cite[Theorem \rm{II}.1.8]{Alfsen:1971hl} is isomorphic to $E$, is a lattice in the usual ordering. In this case, every continuous function $f\colon \partial_e(S(E))\to\rr$ uniquely extends affinely and continuously to $S(E)$ via $\tau\mapsto\int_{\partial_e(S(E))}f\,d\mu_\tau$ (see \cite[Proposition \rm{II}.3.13]{Alfsen:1971hl}). This extension $\tilde{f}$ has the same uniform norm as $f$ (already noted in \cite[Proposition \rm{II}.3.13]{Alfsen:1971hl}), and the same Lipschitz seminorm if \eqref{eqn:isometry} holds: for $\sigma,\tau\in S(E)$ we have
\[
\left|\sigma(\tilde{f}) - \tau(\tilde{f})\right| = \left|\int f\,d\mu_\sigma - \int f\,d\mu_\tau\right| \le L_{\rho_L}(f)W_{\rho_L}(\mu_\sigma,\mu_\tau) = L_{\rho_L}(f) \rho_L(\sigma,\tau),
\]
so
\[
L_{\rho_L}\big(\tilde{f}\big) \le L_{\rho_L}\big(f\big) = L_{\rho_L}\big(\tilde{f}|_{\partial_e(S(E))}\big) \le L_{\rho_L}\big(\tilde{f}\big),
\]
giving $L_{\rho_L}\big(\tilde{f}\big) = L_{\rho_L}\big(f\big)$.

Conversely, if every $f\colon \partial_e(S(E))\to\rr$ can be extended in the manner asserted, then for every $\sigma,\tau\in S(E)$,
\begin{align*}
W_{\rho_L}(\mu_\sigma,\mu_\tau) &= \sup\left\{\left|\int f\,d\mu_\sigma - \int f\,d\mu_\tau\right| \mid f\in\lip(\partial_e(S(E)),\rho_L)\le1\right\}\\
&= \sup\left\{\left|\sigma(f) - \tau(f)\right| \mid f\in E,\,L(f)\le1\right\}\\
&= \rho_L(\sigma,\tau).
\end{align*}
Equation \eqref{eqn:dirichlet} holds because of uniqueness of the extension from the boundary, and indeed we could have equivalently used it in place of \eqref{eqn:isometry} in Definition~\ref{def:qmbs}.
\end{proof}

\begin{example} \label{ex:cqmcs}
As in \cite{Jacelon:2021vc,Jacelon:2022wr} (although the language of quantum spaces was not used there), our foundational examples of $\cs$-algebraic quantum metric Choquet simplices arise as inductive limits of subhomogeneous building blocks. 
\begin{enumerate}[1.]
\item \label{it:ex1} Let $(X,\rho)$ be a compact metric space and $n\in\nn$, and let $A=C(X,M_n)$ (so that, in particular, $\partial_e(T(A)) \cong X$). Define $L\colon A\to[0,\infty]$ by
\begin{equation} \label{eqn:classical}
L(f) = \sup\left\{\frac{|\tr(f(x))-\tr(f(y))|}{\rho(x,y)} \mid x\ne y \in X\right\}.
\end{equation}
Then, $(A,L)$ is a quantum metric Bauer simplex with $\rho_L=W_\rho$ (the Wasserstein metric on $\prob(X)\cong T(A)$). Moreover, we can make an explicit choice of the nucleus $\mathcal{D}_r(A)$ of Theorem~\ref{thm:nucleus}, namely, for $r\ge r_{(X,\rho)}$ (the $\rho$-radius of $X$) we set
\begin{equation} \label{eqn:drmat}
\mathcal{D}_r(A) = \{f \in A_{sa} \mid \|f\|\le r,\: \|f(x)-f(y)\|\le\rho(x,y)\:\text{ for every } x,y\in X\}.
\end{equation}
Then, the $\rr$-linear span of $\mathcal{D}_r(A)$ is dense in $A_{sa}$ (a fact which follows from the Stone--Weierstrass theorem and in particular shows that $L$ is densely finite), and \eqref{eqn:nucleus} holds. In one direction, norm Lipschitz elements are tracially Lipschitz, since for every $x,y\in X$ and $f\in A_{sa}$,
\[
|\tr(f(x))-\tr(f(y))| \le \max_{1\le i\le n} |\lambda_i(x)-\lambda_i(y)| \le \|f(x)-f(y)\|,
\]
where $\lambda_1(t)\le\dots\le\lambda_n(t)$ denote the eigenvalues of $f(t)$ for $t\in X$. In the other direction, suppose that $L(f)\le 1$, so that $\widehat{f}\in\lip(X,\rho)$ (where $\widehat{f}(x):=\tr(f(x))$). Set $g:=\widehat{f}\cdot1_n$ and choose $x_0,x_1\in X$ such that $\rho(x_0,x_1) = r_{(X,\rho)}$. Then, $g - g(x_0) \in\mathcal{D}_r(A)$ and, since $\tr(g(x))=\tr(f(x))$ for every $x\in X$, we also have $f-g\in A_0$. In other words, $f \in \mathcal{D}_r(A) + \rr1 + A_0$.\\

\noindent Note that this representative $g$ of $f$ in $A^q$ is in fact the multiplier element $g=\lambda_A(\widehat{f})$ of Proposition~\ref{prop:to}. In this context, the multiplier map $C(X) \to A$ does not just provide an affine homeomorphism of trace spaces but, by design, an affine \emph{isometry}. This structure also passes to unital subalgebras like \emph{dimension drop algebras}, which we recall are $\cs$-algebras of the form
\[
Z_{p,q} = \left\{f\in C([0,1],M_p\otimes M_q) \mid f(0)\in M_p \otimes 1_q,\: f(1)\in 1_p \otimes M_q\right\}, \ p,q \in \nn,
\]
and which do indeed satisfy hypothesis \eqref{it:to2} of Proposition~\ref{prop:to}. The nucleus \eqref{eqn:drmat} is used in \cite{Jacelon:2021vc} as a basis for measurement of the distance between the unitary orbits of unital embeddings of (certain inductive limits of) dimension drop algebras into classifiable $\cs$-algebras (see Section~\ref{subsection:metric}).

\item \label{it:ex2} As for \emph{classifiable} examples, \cite[Remark 4.5]{Jacelon:2022wr} indicates that for any nonempty compact metric space $(X,\rho)$, there is a unital, classifiable $\cs$-algebra $\mathcal{A}_X$ with $\partial_e(T(\mathcal{A}_X))\cong X$ such that $\rho$ endows $\mathcal{A}_X$ with the structure of a $\cs$-QMBS via \eqref{eqn:dirichlet}. In other words, the seminorm $L\colon \mathcal{A}_X \to [0,\infty]$ is defined by 
\begin{equation} \label{eqn:lbauer}
L(a) = L_\rho(\widehat{a}) = \sup\left\{\frac{|\widehat{a}(x)-\widehat{a}(y)|}{\rho(x,y)} \mid x\ne y \in X\right\}
\end{equation}
where, as usual, $\widehat{a}(\tau)=\tau(a)$ for $\tau\in T(\mathcal{A}_X)$. The $\cs$-algebra $\mathcal{A}_X$ is built as an inductive limit of homogeneous building blocks $C(X,M_{n_i})$ with diagonal connecting maps, most of whose eigenvalue functions are equal to the identity map $X\to X$ (and the rest are constant). If $X$ is connected, then $\mathcal{A}_X$ is $K$-connected, and we can arrange for the image $\rho_{\mathcal{A}_X}(K_0(\mathcal{A}_X))$ of the pairing map to be any prescribed dense subgroup of $\qq\cdot1\subseteq\aff(T(\mathcal{A}_X))$. If $X$ is a compact, connected Riemannian manifold and $\rho$ is its intrinsic metric, then we can modify the construction of $\mathcal{A}_X$ as in \cite[Theorem 4.4]{Jacelon:2022wr} to produce a \emph{projectionless} model $\js_X$. These are built as inductive limits of \emph{generalised} dimension drop algebras
\[
X_{p,q} = \left\{f\in C(X,M_p\otimes M_q) \mid f(x_0)\in M_p\otimes 1_q,\: f(x_1)\in1_p\otimes M_q\right\}
\]
with connecting maps carefully controlled to be compatible with the metric structure so that tracially Lipschitz elements at finite stages are mapped to tracially Lipschitz elements in the limit (a task aided by the manifold's abundance of geodesics).

\item \label{it:ex3} Suppose that $A$ is a simple, unital $\cs$-algebra that is isomorphic to an inductive limit $\varinjlim(A_i,\varphi_i)$ of homogeneous building blocks $A_i=C(X_i,M_{n_i})$ over compact metric spaces $(X_i,\rho_i)$ of uniformly bounded diameter. Let us show that $A$ can be equipped with $\cs$-QMCS structure in such a way that the Lipschitz elements in each $A_i$ are mapped to Lipschitz elements in $A$. We equip each $A_i$ with the corresponding $\cs$-QMBS structure as in Example~\ref{it:ex1}, which equivalently means that we equip $T(A_i)\cong\prob(X_i)$ with the Wasserstein metric $W_{\rho_i}$ given by \eqref{eqn:wass}. The trace space  $T(A)$ (as usual with the $w^*$-topology) is affinely homeomorphic to the inverse limit $\varprojlim(T(A_i),\varphi_i^*)$ (which is equipped with the product topology inherited from $\prod_{i\in\nn} T(A_i)$). We then define a metric $\rho_\Sigma$ on $T(A)$ by
\[
\rho_\Sigma((\sigma_i)_{i=1}^\infty,(\tau_i)_{i=1}^\infty) := \sum_{i=1}^\infty \omega_iW_{\rho_i}(\sigma_i,\tau_i)
\]
for some fixed summable sequence of real numbers $\omega_i>0$. For any $j$, we have by definition that $\rho_\Sigma((\sigma_i),(\tau_i))\ge\omega_jW_{\rho_j}(\sigma_j,\tau_j)$, so the induced map $(T(A),\rho_\Sigma)\to (T(A_j),W_{\rho_j})$ is $\omega_j^{-1}$-Lipschitz, so the Lipschitz seminorm of any $a\in A_j$ is scaled by at most $\omega_j^{-1}$ in the limit. This in particular implies that tracially Lipschitz elements are dense in $A$. Moreover, $\rho_\Sigma$ induces the $w^*$-topology on $T(A)$, and is convex \eqref{eqn:convex} and midpoint-balanced \eqref{eqn:midpt} since each $W_{\rho_i}$ is. As explained in Remark~\ref{rem:lsc} (here applied to $E=A^q$), this implies that $(A,L_{\rho_\Sigma})$ is a $\cs$-QMCS.

Note that in this construction, $T(A)$ need not be a Bauer simplex and $A$ need not be classifiable. Included as examples are the non-$\js$-stable Villadsen algebras whose trace spaces are shown in \cite{Elliott:2023aa} to be affinely homeomorphic to the Poulsen simplex (characterised by density of its extreme points, so, very much not a Bauer simplex). 
\end{enumerate}
\end{example}

In Section~\ref{section:qcp}, we will discover more examples via dynamics. In fact, we will see that any metrisable Choquet simplex can be exhibited as the trace space of a classifiable and \emph{dynamical} $\cs$-QMCS, that is, one constructed as a tracial quantum crossed product associated with a dynamical system on a (zero-dimensional) compact metric space (see Definition~\ref{def:qcp} and Corollary~\ref{cor:dynqmcs}).

\subsection{Metric analysis} \label{subsection:metric}

Finally, we describe two examples of what we mean by `metric analysis' in $\emb(A,B)$.

\subsubsection{Unitary distances} \label{subsubsection:unitary}

Let $(A,L)$ be a $\cs$-QMCS and $B$ a unital $\cs$-algebra. The \emph{unitary distance} in $\emb(A,B)$ relative to a nucleus $\mathcal{D}_r(A)$ is defined to be
\begin{equation} \label{eqn:unitary}
d_U(\varphi,\psi)|_{\mathcal{D}_r(A)} := \inf_{u\in\mathcal{U}(B)} \sup_{a\in\mathcal{D}_r(A)} \|\varphi(a)-u\psi(a)u^*\|.
\end{equation}
This is a reasonable measure of distance in the following sense. On the one hand, if $\varphi\sim_{au}\psi$ (that is, there is a sequence of unitaries $(u_n)_{n\in\nn}$ in $B$ such that $\varphi(a)=\lim_{n\to\infty}u_n\psi(a)u_n^*$ for every $a\in A$), then $d_U(\varphi,\psi)|_{\mathcal{D}_r(A)}=0$ by compactness of $\mathcal{D}_r(A)$. On the other hand, if $d_U(\varphi,\psi)|_{\mathcal{D}_r(A)}=0$, then $T(\varphi)|_{\mathcal{D}_r(A)}=T(\psi)|_{\mathcal{D}_r(A)}$ (where $T(\varphi)$ and $T(\psi)$ are the continuous affine maps $T(B)\to T(A))$ induced by $\varphi$ and $\psi$), so by \eqref{eqn:nucleus} and density of $L^{-1}([0,\infty))$ in $A$, we have $T(\varphi)=T(\psi)$. In particular, if $B$ is classifiable and has real rank zero (so that the $\overline{K}_1^{\mathrm{alg}}$-component of the classification \cite[Theorem B]{Carrion:wz} disappears), then $\varphi\sim_{au}\psi$ if and only if $KL(\varphi)=KL(\psi)$ (which, if $K_*(A)$ is finitely generated and torsion free, is equivalent to $K_*(\varphi)=K_*(\psi)$). In this case, $d_U(\cdot,\cdot)|_{\mathcal{D}_r(A)}$ is a metric on $K$-localised components of $\emb(A,B)/\sim_{au}$ in the sense of \eqref{eqn:klocal}. In some `near-classical' cases (see Example~\ref{ex:cqmcs}.\ref{it:ex1}), it is possible to find a nucleus $\mathcal{D}_r(A)$ whose linear span is dense in $A_{sa}$, and which therefore affords the more satisfying conclusion $d_U(\varphi,\psi)=0 \implies \varphi\sim_{au}\psi$ without appeal to $K$-theory or classification. The question of a measure-theoretic computation of $d_U$ in such cases is considered in \cite{Jacelon:2021wa,Jacelon:2021vc} and will not be taken up further here (but see Example~\ref{ex:metricdeform2}).

\subsubsection{Birkhoff convergence rates} \label{subsubsection:birkhoff}

Let $(A,L)$ be a $\cs$-QMBS. As in \cite[Section 3]{Jacelon:2022wr}, we call a $^*$-homomorphism $\alpha \colon A \to A$ \emph{tracially nondegenerate} if $\alpha$ induces a continuous affine map $T(\alpha) \colon T(A) \to T(A)$ that preserves the boundary $\partial_e(T(A))$.

Suppose that $\alpha$ is such a $^*$-endomorphism with a unique fixed trace, that is, there is a unique $\tau_\alpha\in T(A)$ such that $\tau_\alpha\circ\alpha=\tau_\alpha$. Equivalently, writing $X=\partial_e(T(A))$ and $h=T(\alpha)|_{X}$, there is a unique $\mu=\mu_{\tau_\alpha}\in\prob(X)$ such that $h_*\mu=\mu$. In this setting, Birkhoff's ergodic theorem tells us that, for every $f\in C(X)$, the time average $\frac{1}{n}\sum_{k=0}^{n-1} f\circ h^k$ converges uniformly to the spatial average $\int_Xf\,d\mu$. For every $r\ge r_{(A,L)} = r_{(X,\rho_L)}$, the convergence is also uniform over the compact set
\begin{equation} \label{eqn:drcomm}
\mathcal{D}_{r}(C(X))=\lip(X,\rho_L)\cap B_{r}(C(X)).
\end{equation}
Using the fact that every $\tau\in T(A)$ can be approximated in the $w^*$-topology by a finite convex combination of extremal traces, we have the following interpretation of Birkhoff's theorem at the level of the $\cs$-algebra: for every $r \ge r_{(A,L)}$ and $\varepsilon>0$, there exists $N\in\nn$ such that for every $n\ge N$,
\begin{equation} \label{eqn:birkhoff}
\sup_{a\in\mathcal{D}_r(A)} \sup_{\tau\in T(A)} \left| \frac{1}{n}\sum_{k=0}^{n-1} \tau(\alpha^k(a))-\tau_\alpha(a)\right| \le \varepsilon.
\end{equation}

Lest one think that we are making minimal use of the ambient quantum metric structure, we recall that the Arzel\`{a}--Ascoli theorem characterises compactness in $C(X)$ via uniform boundedness and equicontinuity, which of course are the properties that define $\mathcal{D}_r(A)$. In Section~\ref{subsection:random}, we will observe that a version of \eqref{eqn:birkhoff} holds for \emph{random} tracially nondegenerate endomorphisms of $A$ associated with Markov--Feller processes on $\partial_e(T(A))$.

\begin{definition} \label{def:birkhoff}
Let $(A,L)$, $\alpha$ and $r$ be as above. The \emph{Birkhoff convergence rate} of $(A,L,\alpha)$ is the function $\beta_r\colon (0,\infty) \to \nn$, where $\beta_r(\varepsilon)$ is defined to be the least $N$ such that \eqref{eqn:birkhoff} holds for every $n\ge N$.
\end{definition}
In Section~\ref{section:cf}, we investigate how this convergence rate varies as we change $L$. For now, we point out an effect that we metaphorically associate with gravitational time dilation. Viewing time as the perception of dynamical evolution (perhpas planetary rotation or atomic decay in reality, or Birkhoff convergence of an endomorphism in $\cs$-reality) and mass as embodied by the size of the nucleus $\mathcal{D}_r(A)$, the metric-warping effect of an increase in mass is a corresponding decrease of the Lip-norm $L$, and consequently a lengthening of the metric $\rho_L$ and dilation of time $\beta_r(\varepsilon)$.

\section{Quantum distances} \label{section:qgh}

Recall that the \emph{Hausdorff distance} $\dist^\rho_{\mathrm{H}}(Y,Z)$ (or, depending on what is being emphasised, $\dist^X_{\mathrm{H}}(Y,Z)$) between nonempty subsets $Y$ and $Z$ of a metric space $(X,\rho)$ is
\[
\dist^\rho_{\mathrm{H}}(Y,Z) = \inf\{r>0 \mid Y\subseteq Z_r,\: Z\subseteq Y_r\},
\]
where $S_r$ denotes the $r$-neighbourhood of $S\subseteq X$ as in Notation~\ref{notation}. The \emph{Gromov--Hausdorff distance} between compact metric spaces $(X_1,\rho_1)$ and $(X_2,\rho_2)$ is
\begin{align} \label{eqn:dgh}
\dist_{\mathrm{GH}}(X_1,X_2) = \inf&\{\dist^\rho_{\mathrm{H}}(h_1(X_1),h_2(X_2)) \mid h_1\colon X_1\to X,\,h_2\colon X_2\to X \text{ are} \notag\\ &\text{isometric embeddings into some metric space } (X,\rho)\}
\end{align}
(see \cite{Gromov:1981aa}, and note that it is sufficient to consider embeddings into the disjoint union $X_1 \sqcup X_2$). In \cite{Rieffel:2004aa}, the distance $\dq$ between (isometry classes of) compact quantum metric spaces is defined as a quantum analogue of $\dist_{\mathrm{GH}}$.

\begin{definition} \label{def:qgh}
Let $(E_1,L_1)$ and $(E_2,L_2)$ be compact quantum metric spaces. A Lip-norm $L$ on $E_1\oplus E_2$ is \emph{admissible} if it induces $L_1$ and $L_2$ under the quotient maps $\pi_i\colon E_1\oplus E_2 \to E_i$, that is, $L_i(x) = \inf\{L(y) \mid \pi_i(y)=x\}$ for $i=1,2$ and $x\in E_i$. The \emph{quantum Gromov--Hausdorff distance} between $(E_1,L_1)$ and $(E_2,L_2)$ is
\[
\dq(E_1,E_2) = \inf\{\dist_{\mathrm{H}}^{\rho_L}(S(E_1),S(E_2)) \mid L\, \text{ an admissible Lip-norm on}\, E_1\oplus E_2\}.
\]
An \emph{isometry} between compact quantum metric spaces $(E_1,L_1)$ and $(E_2,L_2)$ is an order isomorphism $\varphi\colon \overline{E_1}\to \overline{E_2}$ such that $L_2^c\circ\varphi=L_1^c$ (where $L_i^c$ is defined as in \eqref{eqn:closure}).
\end{definition}

\begin{proposition}[\cite{Rieffel:2004aa}] \label{prop:isometry}
Let $(E_1,L_1)$ and $(E_2,L_2)$ be compact quantum metric spaces.
\begin{enumerate}[(i)]
\item \label{it:isometry1} If $(E_1,L_1)$ and $(E_2,L_2)$ are closed (see Remark~\ref{rem:lsc}), then the isometries from $(E_1,L_1)$ to $(E_2,L_2)$ are in bijective correspondence with the affine isometries from $(S(E_2),\rho_{L_2})$ onto $(S(E_1),\rho_{L_1})$.
\item \label{it:isometry2} There exists an isometry between $(E_1,L_1)$ and $(E_2,L_2)$ if and only if $\dq(E_1,E_2)=0$.
\end{enumerate}
\end{proposition}

\begin{proof}
For \eqref{it:isometry1}, see \cite[Corollary 6.4]{Rieffel:2004aa}. For the `only if' direction of \eqref{it:isometry2}, see \cite[Proposition 7.1]{Rieffel:2004aa} and its preceding discussion, and for the `if' direction, see \cite[Theorem 7.8]{Rieffel:2004aa}. 
\end{proof}

\begin{definition} \label{def:CQMS}
We let $\mathbb{CQMS}$ denote the set of isometry classes of compact quantum metric spaces equipped with the metric $\dq$. We write $\mathbb{QMCS}$ and $\mathbb{QMBS}$ for the subspaces of isometry classes of quantum metric Choquet simplices and quantum metric Bauer simplices, respectively.
\end{definition}

We note that $\dq$ admits the following equivalent formulation in $\mathbb{QMCS}$ (cf.\ \cite[Proposition 3.2]{Li:2006aa}).

\begin{proposition} \label{prop:gh}
Let $(E_1,L_1)$ and $(E_2,L_2)$ be quantum metric Choquet simplices. Then,
\begin{align*}
\dq(E_1,E_2) = \inf&\{\dist^\rho_{\mathrm{H}}(h_1(S(E_1)),h_2(S(E_2))) \mid h_i\colon S(E_i)\to S(E),\,i=1,2,\\ &\text{are affine isometric embeddings into the state space of a}\\
&\text{quantum metric Choquet simplex $(E,L)$}\}.
\end{align*}
In fact, the infimum can be taken over affine isometric embeddings $h_i\colon S(E_i)\to S(E)$ such that $h_i(\partial_e(S(E_i)))\subseteq \partial_e(S(E))$ for $i=1,2$.
\end{proposition}

\begin{proof}
Denote the quantity on the right-hand side of the displayed equation by $\mathrm{dist}_{\mathrm{q}}'(E_1,E_2)$. If $L$ is an admissible Lip-norm on $E=E_1\oplus E_2$, then the quotient maps $\pi_i\colon E \to E_i$ induce isometric embeddings $h_i\colon S(E_i) \to S(E)$ such that $h_i(\partial_e(S(E_i)))\subseteq \partial_e(S(E))$ and indeed $S(E)=\conv h_1(S(E_1))\cup h_2(S(E_2))$. Moreover, $(E,L)$ is a quantum metric Choquet simplex since the Riesz interpolation property passes to direct sums. Therefore, $\mathrm{dist}_{\mathrm{q}}'(E_1,E_2)\le\dq(E_1,E_2)$. For the reverse inequality,
suppose that $h_i\colon S(E_i)\to S(E)$, $i=1,2$, are affine isometric embeddings. For $i=1,2$, let $\pi_i\colon(E,L)\to(B_i,M_i)$ be the quotients given by restricting the state space to $h_i(S_i)$, that is, $B_i=\aff(h_i(S_i))$, $\pi_i$ is the evaluation map and $M_i(x) = \inf\{L(y) \mid \pi_i(y)=x\}$ for $x\in B_i$. By Proposition~\ref{prop:isometry}\eqref{it:isometry1}, there are isometries between $(B_i,M_i)$ and $(E_i,L_i)$, $i=1,2$, so by Proposition~\ref{prop:isometry}\eqref{it:isometry2}, $\dq(E_1,E_2)=\dq(B_1,B_2)$. But $\dq(B_1,B_2) \le \dist^\rho_{\mathrm{H}}(h_1(S(E_1)),h_2(S(E_2)))$ by \cite[Proposition 5.7]{Rieffel:2004aa}, and so it follows that $\dq(E_1,E_2) \le \mathrm{dist}_{\mathrm{q}}'(E_1,E_2)$.
\end{proof}

In analogy with Gromov's completeness and compactness theorems, it is proved in \cite[Theorem 12.11]{Rieffel:2004aa} that $(\mathbb{CQMS},\dq)$ is complete and in \cite[Theorem 13.5]{Rieffel:2004aa} that a subset $\mathbb{S}\subseteq\mathbb{CQMS}$ is totally bounded if and only if the sets of
\begin{enumerate}[(i)]
\item radii $r_{(E,L)} = r_{\rho_L}(S(E))$, and
\item covering growths
\begin{align}
\mathrm{Cov}_{\rho_L}&\colon(0,\infty) \to \nn\notag\\
\varepsilon &\mapsto \inf\{K\in\nn \mid \text{$(S(E),\rho_L)$ can be covered by $K$ open $\varepsilon$-balls}\} \label{eqn:covering}
\end{align}
\end{enumerate}
of elements $(E,L) \in \mathbb{S}$ are uniformly bounded (by, respectively, some positive real number $R$ and some function $G\colon(0,\infty) \to \nn$). One of the motivating questions for this section is the following.

\begin{question} \label{q:qmcs}
Are the metric spaces $(\mathbb{QMCS},\dq)$ and $(\mathbb{QMBS},\dq)$ complete?
\end{question}

The question for $\mathbb{QMCS}$ is left open, but we will show that $\mathbb{QMBS}$ is indeed complete (Corollary~\ref{cor:complete}) via a quasimetric called the quantum intertwining gap $\gamma_q$ (Definition~\ref{def:quig}) that turns out to be equivalent to $\dq$ (Theorem~\ref{thm:quig}).

The other motivation for this section is the $\cs$-algebraic isometric isomorphism problem. As mentioned in the Introduction, one deficit of the distance $\dq$ is that, if $(A,L)$ and $(B,M)$ are $\cs$-algebraic compact quantum metric spaces, then $\dq(A_{sa},B_{sa})=0$ does not imply that $A\cong B$ as $\cs$-algebras. This can be rectified by, for example, adjusting $\dq$ so that it includes either operator system data (as in \cite{Kerr:2009aa}) or data associated with the Leibniz property (as in \cite{Latremoliere:2016wn}). A pleasing aspect of working with \emph{$K$-connected, classifiable} $\cs$-algebraic quantum metric Choquet simplices is that the isomorphism question then has a $K$-theoretic answer: if $\dq(A^q,B^q)=0$, then by Proposition~\ref{prop:isometry}, $T(A) \cong S(A^q)$ is isometrically affinely homeomorphic to $T(B)\cong S(B^q)$, and so $A\cong B$ if and only if
\[
(K_0(A),K_0(A)_+,[1_A],K_1(A)) \cong (K_0(B),K_0(B)_+,[1_B],K_1(B)).
\]
Moreover, the isomorphism $\varphi \colon A \to B$ induces the given isometric map $T(B) \to T(A)$, so as in Proposition~\ref{prop:isometry}\eqref{it:isometry1}, $\varphi$ is an isometry (that is, $M \circ \varphi = L$). In other words, the $K$-localised isometric isomorphism problem \emph{does} have a positive solution in our primary setting of interest. This is the content of Corollary~\ref{cor:quig}.

Let us now turn to the definition and implementation of the quantum intertwining gap, which is inspired by the metrically equivalent version of $\dist_{\mathrm{GH}}$ presented in \cite[Section 1.3]{Rong:2010aa}.

\begin{definition} \label{def:quig}
For compact, convex metric spaces $X,Y$ and $\varepsilon>0$, define $\isom(X,Y)$ to be the set of continuous affine maps $f\colon X \to Y$ that are
\begin{itemize}
\item \emph{$\varepsilon$-isometric} ($|d_Y(f(x_1),f(x_2)) - d_X(x_1,x_2)| < \varepsilon$ for every $x_1,x_2\in X$) and
\item \emph{$\varepsilon$-invertible} (there exists a continuous affine map $g \colon Y \to X$ such that $d_Y(f \circ g(y),y) < \varepsilon$ for every $y\in Y$).
\end{itemize}
The \emph{intertwining gap} between $(X,d_X)$ and $(Y,d_Y)$ is
\begin{equation} \label{eqn:spacing}
\gamma(X,Y) = \inf\{\varepsilon>0 \mid \isom(X,Y) \times \isom(Y,X) \ne \emptyset\}.
\end{equation}
The \emph{quantum intertwining gap} between $\cs$-algebraic quantum metric Choquet simplices $(A,L)$ and $(B,M)$ is
\begin{equation} \label{eqn:atomic}
\gamma_q((A,L),(B,M)) = \gamma((T(A),\rho_L),(T(B),\rho_M)).
\end{equation}
\end{definition}

\noindent Taking a cue from \cite[Definition  5.6]{Rieffel:2010ab}, we have avoided the word `distance' because of potential failure of the triangle inequality. This shortcoming notwithstanding, in the Bauer setting the intertwining gap is at least a \emph{quasi}metric in the sense of \cite[Chapter 14]{Heinonen:2001aa} (studied also, for example, in \cite{Xia:2009aa}), namely, there is a constant $C$ $(=2)$ such that
\[
\gamma(X,Z) \le C(\gamma(X,Y) + \gamma(Y,Z)) \:\text{ for all } X,Y,Z.
\]
Again restricted to the Bauer setting, the intertwining gap also provides another description of the $\dq$-topology. In the following, by a \emph{metric Bauer simplex} we mean a Bauer simplex $X$ whose metric has been induced from a metric on $\partial_eX$ in the manner described in Section~\ref{subsection:qmbs}, that is, we identify $X$ with the probability space $\prob(\partial_eX)$ equipped with the Wasserstein metric \eqref{eqn:wass}. Note that, by Kadison's representation theorem \cite[Theorem \rm{II}.1.8]{Alfsen:1971hl} and the duality provided by Proposition~\ref{prop:isometry}\eqref{it:isometry1}, the functors $\aff$ and $S(\cdot)$ provide an equivalence between the category of metric Bauer simplices (with isometric affine maps as morphisms) and the opposite of the category of commutative $\cs$-algebraic quantum metric Bauer simplices (whose morphisms are isometries in the sense described in Definition~\ref{def:qgh}).

\begin{theorem} \label{thm:quig}
The intertwining gap $\gamma$ is a complete quasimetric on the set of affine isometric isomorphism classes of metric Bauer simplices. Moreover, in this setting $\gamma=\gamma_q$ is topologically equivalent to $\dq$, that is, $\gamma$-convergence of a sequence $(X_n)$ to $X$ is the same as $\dq$-convergence of $(\aff(X_n))$ to $\aff(X)$.
\end{theorem}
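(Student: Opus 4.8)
Throughout, identify a metric Bauer simplex $X$ with the commutative $\cs$-algebraic quantum metric Bauer simplex $\aff(X)=C(\partial_eX)_{sa}$ via the equivalence of categories recalled before the statement, so that $\dq(X_n,X)$ refers to Rieffel's distance between these order unit spaces and, by \eqref{eqn:atomic}, $\gamma$ and $\gamma_q$ coincide. The plan is to dispatch the quasimetric axioms by soft arguments and then prove topological equivalence with $\dq$ as two implications, one constructive and one by a compactness argument.

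\emph{Quasimetric axioms.} Reflexivity and symmetry are immediate from Definition~\ref{def:quig}: $\id_X$ is $\varepsilon$-isometric and $\varepsilon$-invertible for every $\varepsilon>0$, and the defining condition is symmetric in $X,Y$. For the quasi-triangle inequality with constant $2$: if $f_i$ is $\varepsilon_i$-isometric with $\varepsilon_i$-inverse $g_i$ $(i=1,2)$, then $f_2\circ f_1$ is $(\varepsilon_1+\varepsilon_2)$-isometric and, with $g_1\circ g_2$ as witness, $(\varepsilon_1+2\varepsilon_2)$-invertible, hence $2(\varepsilon_1+\varepsilon_2)$-isometric-and-invertible; symmetrically for $g_1\circ g_2$, and taking infima gives $\gamma(X,Z)\le2(\gamma(X,Y)+\gamma(Y,Z))$. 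The one substantial point is that $\gamma(X,Y)=0$ forces an affine isometric isomorphism $X\cong Y$: choosing affine $\varepsilon_n$-isometries $f_n\colon X\to Y$ with $\varepsilon_n\to0$, each $\varepsilon_n$-invertible, the family $\{f_n\}$ is uniformly equicontinuous (a tail is almost $1$-Lipschitz, the finite remainder equicontinuous by compactness of $X$), so by Arzel\`a--Ascoli a subsequence converges uniformly to a continuous affine isometric map $f\colon X\to Y$; chasing a convergent subsequence of the approximate preimages of a given point shows $\varepsilon_n$-invertibility forces $f$ surjective, so $f$ is the required isomorphism. (This uses neither the Bauer hypothesis nor the passage from $\gamma$ to $\gamma_q$.)

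\emph{From $\dq$-convergence to $\gamma$-convergence.} Suppose $\dq(X_n,X)\to0$. By Proposition~\ref{prop:gh} there are quantum metric Choquet simplices $(D_n,L_n)$ and affine isometric embeddings $j_n\colon X\hookrightarrow S(D_n)$, $j_n'\colon X_n\hookrightarrow S(D_n)$ with $\dist_{\mathrm H}(j_n(X),j_n'(X_n))<\delta_n\to0$. I would upgrade these to affine maps between $X$ and $X_n$ by a partition-of-unity construction: fix a finite $\delta_n$-net $w_1,\dots,w_k$ of $\partial_eX$ with subordinate partition of unity $(\phi_i)$, choose $\sigma_i\in X_n$ with $j_n'(\sigma_i)$ within $\delta_n$ of $j_n(\delta_{w_i})$, and let $f_n$ be the ($w^*$-continuous, affine) barycentric extension of $w\mapsto\sum_i\phi_i(w)\sigma_i$. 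Convexity of the state-space metric $\rho_{L_n}$ (Remark~\ref{rem:lsc}) lets one pull it inside the barycenter, and since each summand is controlled by $2\delta_n$ this yields $\rho_{L_n}(j_n'(f_n\sigma),j_n(\sigma))<2\delta_n$ uniformly in $\sigma\in X$; as $j_n,j_n'$ are isometric, $f_n$ is then $4\delta_n$-isometric. The symmetric construction of $g_n\colon X_n\to X$ obeys the analogous bound, and the two estimates combine to show both composites $g_n\circ f_n$ and $f_n\circ g_n$ are within $O(\delta_n)$ of the respective identities, so $f_n$ and $g_n$ are mutually approximate inverses and witness $\gamma(X,X_n)=O(\delta_n)\to0$.

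\emph{From $\gamma$-convergence to $\dq$-convergence---the main obstacle.} Here the affine $\varepsilon_n$-isometries $X\to X_n$ and $X_n\to X$ need not restrict to maps between the extreme boundaries $Z=\partial_eX$, $Z_n=\partial_eX_n$, so I see no way to glue $Z$ to $Z_n$ and write down a small admissible Lip-norm directly; instead I argue by contradiction. If $\gamma(X_n,X)\to0$ but $\dist_{\mathrm{GH}}(Z_n,Z)\ge\delta$ along a subsequence, the $\varepsilon_n$-isometries to and from the fixed $X$ bound the diameters and covering numbers of $X_n=\prob(Z_n)$, hence of $Z_n\subseteq X_n$, so $\{Z_n\}$ is uniformly bounded and uniformly totally bounded; by Gromov's precompactness theorem, a further subsequence satisfies $Z_n\to Z_\infty$ in the Gromov--Hausdorff metric. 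Rieffel's commutative estimate $\dq\le\dist_{\mathrm{GH}}$ (via a common embedding of $Z_n,Z_\infty$ and McShane extension of Lipschitz functions; cf.\ \cite{Rieffel:2004aa}), together with the identification of $\aff(\prob(Z))$ with $C(Z)_{sa}$ matching the Lip-norms, gives $\dq(X_n,\prob(Z_\infty))\to0$; the already-proven implication then yields $\gamma(X_n,\prob(Z_\infty))\to0$, and the quasi-triangle inequality forces $\gamma(\prob(Z_\infty),X)=0$, so by the isomorphism statement $Z_\infty\cong Z$ isometrically---contradicting $\dist_{\mathrm{GH}}(Z_n,Z)\ge\delta$. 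Hence $\dist_{\mathrm{GH}}(Z_n,Z)\to0$, and a final application of Rieffel's estimate gives $\dq(X_n,X)\to0$. The real difficulty is concentrated in this last implication: its proof is not constructive but bootstraps off the quasimetric axioms, the $\dq$-to-$\gamma$ direction, and Gromov precompactness to manufacture a limit point that pins $Z$ down up to isometry.
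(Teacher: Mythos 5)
Your proof is correct, but it diverges from the paper's at every substantive step, so it is worth recording the differences. For the quasimetric axioms you prove positive definiteness directly by an Arzel\`a--Ascoli argument (the tail of a sequence of $\varepsilon_n$-isometries is equicontinuous because $d_Y(f_nx_1,f_nx_2)\le d_X(x_1,x_2)+\varepsilon_n$, and $\varepsilon_n$-invertibility passes to the uniform limit), whereas the paper only obtains it at the end as a by-product of the equivalence with $\dq$; your version is more self-contained and, as you note, needs neither the Bauer hypothesis nor that equivalence. In the direction ``$\dq\to0$ implies $\gamma\to0$'', the paper invokes the Yannelis--Prabhakar continuous selection theorem to produce $f$ on $\partial_eX$ and then extends affinely; your partition-of-unity/barycentric averaging achieves the same by elementary means, at the cost of a constant (roughly $\gamma\le4\dq$ rather than the paper's $\gamma_q\le2\dq$, which is the inequality reused in Remark~\ref{rem:quig}). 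The genuine divergence is in the hard direction ``$\gamma\to0$ implies $\dq\to0$'': the paper proves a quantitative boundary-control lemma (an almost-isometry cannot move $\partial_eX$ far from $\partial_eY$, via the antipodal-point argument) and then glues $X\sqcup Y$ with Rong's metric to exhibit an explicit admissible Lip-norm giving $\dq<4\varepsilon$; you instead argue by contradiction, using the $\varepsilon_n$-isometries to get uniform total boundedness of the boundaries, Gromov precompactness to extract a Gromov--Hausdorff limit $Z_\infty$, the already-proven easy direction plus the quasi-triangle inequality to force $\gamma(X,\prob(Z_\infty))=0$, and your positive-definiteness statement to identify $Z_\infty$ with $\partial_eX$. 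Your route is shorter and avoids the delicate $\delta(\varepsilon)$ bookkeeping, but it is non-quantitative: it yields no explicit modulus relating $\gamma$ and $\dq$ in that direction. This is harmless for the theorem as stated, which asserts only topological equivalence, though the paper's explicit constants are what feed the chain $\widehat{\dq}\le\gamma_q\le2\dq$ of Remark~\ref{rem:quig}.
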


\begin{proof}
The intertwining gap $\gamma$ is symmetric by its definition. It is also apparent that $\gamma(X,Y) = 0$ if $X$ and $Y$ are isometrically affinely isomorphic. The converse implication, namely that isometric affine isomorphism of $X$ and $Y$ is necessitated by $\gamma(X,Y) = 0$, will follow from Proposition~\ref{prop:isometry}\eqref{it:isometry2} and the equivalence of $\gamma_q$ and $\dq$ demonstrated below. But let us first investigate the triangle inequality. Suppose that $X,Y,Z$ are compact, convex metric spaces with $\gamma(X,Y) = s$ and $\gamma(Y,Z) = t$. Let $\varepsilon>0$, and let $f_1 \colon X \to Y$ be an $(s+\varepsilon)$-isometry with $(s+\varepsilon)$-inverse $g_1 \colon Y \to X$ and $f_2 \colon Y \to Z$ a $(t+\varepsilon)$-isometry with $(t+\varepsilon)$-inverse $g_2 \colon Z \to Y$. Let $f=f_2\circ f_1 \colon X \to Z$ and $g= g_1 \circ g_2 \colon Z \to Y$. Then $f$ and $g$ are continuous and affine, $f$ is an $(s+t+2\varepsilon)$-isometry and for $z\in Z$ we have
\begin{align*}
d_Z(f\circ g (z),z) &= d_Z(f_2 \circ f_1 \circ g_1 \circ g_2(z),z)\\
&\le d_Z(f_2 \circ f_1 \circ g_1 \circ g_2(z),f_2 \circ g_2(z)) + d_Z(f_2 \circ g_2(z),z) \\
&\le d_Y(f_1 \circ g_1 \circ g_2(z),g_2(z)) + 2t + 2 \varepsilon\\
&\le s + 2t + 3\varepsilon.
\end{align*}
By symmetry, there similarly exists an $(s+t+2\varepsilon)$-isometry $f' \colon Z \to X$ with $2s+t+3\varepsilon$-inverse $g'$. It follows that
\[
\gamma(X,Z) \le 2s + 2t = 2\left(\gamma(X,Y)+\gamma(Y,Z)\right),
\]
so $\gamma$ is a quasimetric.

Next, let us explore the relationship between $\gamma$ and $\dq$ in the Bauer category. Let $X$ be a metric Bauer simplex. First, we claim that if $\gamma(X,Y)$ is sufficiently small, then an almost-surjective, almost-isometry $f \colon X \to Y$ witnessing this closeness cannot map $\partial_eX$ too far away from $\partial_eY$. More precisely, let $\varepsilon>0$ and set
\[
\delta := \frac{1}{2}\inf\left\{d_X\left(x,\frac{1}{2}(x_1+x_2)\right) \mid x\in\partial_eX,\, x_i\in X,\, d_X(x_i,x) \ge \varepsilon\:, i=1,2\right\} \in \left(0,\frac{\varepsilon}{2}\right]
\]
(assuming that $\varepsilon$ is small enough for the defining set to be nonempty). Suppose that $Y$ is a metric Bauer simplex with $\gamma(X,Y) < \delta$. Let $f \colon X \to Y$ be a $\delta$-isometry with $\delta$-inverse $g$. Then, we must have
\[
f(x) \in (\partial_eY)_{\frac{1}{2}(\varepsilon + 3\delta)} \subseteq (\partial_eY)_{2\varepsilon} \: \text{ for every } x\in \partial_eX.
\]
To see this, suppose that $x\in \partial_eX$ is such that $B_{\frac{1}{2}(\varepsilon + 3\delta)}(f(x)) \cap \partial_eY = \emptyset$. Let $y_1,y_2 \in Y$ be two antipodal points on the $\frac{1}{2}(\varepsilon+3\delta)$-sphere centred at $f(x)$, so that $f(x) = \frac{1}{2}(y_1+y_2)$ and $d_Y(y_1,y_2) = \varepsilon+3\delta$. Let $x_1,x_2\in X$ such that $d_Y(f(x_i),y_i)<\delta$ for $i=1,2$ (namely, $x_i=g(y_i)$) and set $x' = \frac{1}{2}(x_1+x_2)$. Then,
\[
d_X(x_1,x_2) > d_Y(f(x_1),f(x_2)) - \delta > d_Y(y_1,y_2) - 3\delta = \varepsilon.
\]
On the other hand, we have by convexity \eqref{eqn:convex} of the metric $d_Y$ that
\begin{align*}
d_Y(f(x),f(x')) &= d_Y\left(\frac{1}{2}(y_1+y_2),\frac{1}{2}(f(x_1)+f(x_2))\right)\\
&\le \frac{1}{2}\left(d_Y(y_1,f(x_1)) + d_Y(y_2,f(x_2))\right)\\
&< \delta,
\end{align*}
so
\[
d_X\left(x,\frac{1}{2}(x_1+x_2)\right) = d_X(x,x') < d_Y(f(x),f(x')) + \delta < 2\delta,
\]
which contradicts the definition of $\delta$. So indeed, $f(x) \in (\partial_eY)_{\frac{1}{2}(\varepsilon + 3\delta)} $ as claimed. With this bookkeeping taken care of (and still assuming that $\gamma(X,Y)<\delta$), we can now follow the proof of \cite[Lemma 1.3.3]{Rong:2010aa} (suitably tailored to the affine setting) to equip $X \sqcup Y$ with metric Bauer structure. Namely, we define an admissible metric on $\partial_eX \sqcup \partial_eY$ (that is, one that agrees with $d_X$ and $d_Y$ on $X$ and $Y$, respectively) by
\begin{equation} \label{eqn:iii}
d(x,y) := \inf_{z\in X}(d_X(x,z) + d_Y(f(z),y)) + \frac{\delta}{2}
\end{equation}
for $x\in\partial_eX$ and $y\in\partial_eY$, and then extend $d$ to $\prob(\partial_eX \sqcup \partial_eY)$ via \eqref{eqn:wass}. (Any skeptic of the triangle inequality is invited to inspect the proof of \cite[Lemma 1.3.3]{Rong:2010aa}, where this is demonstrated.) Let $x\in \partial_eX$ and let $y \in \partial_eY$ with $d_Y(f(x),y) < \frac{1}{2}(\varepsilon + 3\delta)$. Then by definition,
\begin{equation} \label{eqn:rong}
d(x,y) \le  d_X(x,x) + d_Y(f(x),y) + \frac{\delta}{2} < \frac{\varepsilon}{2} + 2\delta \le \frac{3\varepsilon}{2},
\end{equation}
so $\partial_eX \subseteq (\partial_eY)_{\frac{3\varepsilon}{2}}$, and therefore $X \subseteq Y_{\frac{3\varepsilon}{2}}$ by convexity \eqref{eqn:convex} and the Krein--Milman theorem (whose subsequent use is sometimes stealthy). Note that, by \eqref{eqn:rong}, we have
\[
d(x,f(x)) < \frac{\varepsilon}{2} + \frac{3\delta}{2} + \frac{3\varepsilon}{2}
\]
for every $x\in\partial_eX$, and hence also for every $x\in X$. Therefore, for $y\in Y$,
\[
d(g(y),y) \le d(g(y),f(g(y))) + d(f(g(y)),y)
< \frac{\varepsilon}{2} + \frac{3\delta}{2} + \frac{3\varepsilon}{2} + \delta \le \frac{13\varepsilon}{4},
\]
so $Y \subseteq (X)_{\frac{13\varepsilon}{4}}$. It follows from Proposition~\ref{prop:gh} (with $(E,L) = (\aff(\prob(X \sqcup Y)),L_{W_d})$) that
\[
\dq(\aff(X),\aff(Y)) < \frac{13\varepsilon}{4} < 4\varepsilon.
\]
As mentioned above, a consequence of this observation is the positive definiteness of $\gamma$, so we have also shown that $\gamma$ is a quasimetric.

Suppose conversely that $\dq(\aff(X),\aff(Y)) < \varepsilon$. As in Proposition~\ref{prop:gh}, there are affine isometric embeddings of $X$ and $Y$ into the state space $Z$ (in fact, the closed convex hull of $X \sqcup Y$) of a quantum metric Choquet simplex such that $X\subseteq Y_\varepsilon$ and $Y \subseteq X_\varepsilon$. To find suitable maps $f\colon X \to Y$ and $g\colon Y \to X$, we apply the Yannelis--Prabhakar selection theorem \cite[Theorem 3.1]{Yannelis:1983aa} to the compact metric space $\partial_eX$, the convex space $Y$ and the subsets $S(x) := \{y \in Y \mid d_Z(x,y) < \varepsilon\}$ for $x\in\partial_eX$. Each $S(x)$ is nonempty and convex and, for every $y\in Y$, $S^{-1}(y):=\{x\in \partial_eX \mid y \in S(x)\} = \partial_eX \cap B_\varepsilon(y)$ is an open subset of $\partial_eX$. Under these circumstances, the Yannelis--Prabhakar theorem provides a continuous function $f\colon \partial_eX \to Y$ such that, for every $x\in \partial_eX$, $f(x) \in S(x)$, that is $d_Z(x,f(x))<\varepsilon$. Similarly, there is a continuous function $g\colon \partial_eY \to X$ such that, for every $y\in \partial_eY$, $d_Z(y,g(y))<\varepsilon$. Extend $f$ and $g$ to continuous affine functions $X \to Y$ and $Y \to X$, respectively. Notice that, for $x_1,x_2 \in \partial_eX$, we have
\begin{align*}
-2\varepsilon < - d_Z(x_1,f(x_1)) - d_Z(x_2,f(x_2) &\le d_Y(f(x_1),f(x_2)) - d_X(x_1,x_2)\\
&\le d_Z(f(x_1),x_1) + d_Z(x_2,f(x_2))\\
&< 2\varepsilon.
\end{align*}
In other words,
\[
|d_Y(f(x_1),f(x_2)) - d_X(x_1,x_2)| <2\varepsilon.
\]
By convexity \eqref{eqn:convex}, this remains true for $x_1,x_2\in X$, because for $x=\frac{1}{n}\sum_{i=1}^ne_i$ with $e_i\in\partial_eX$, we have
\begin{align*}
d_Z(x,f(x)) = d_Z\left(\frac{1}{n}\sum_{i=1}^ne_i,\frac{1}{n}\sum_{i=1}^nf(e_i)\right) &\le \frac{1}{n}\sum_{i=1}^nd_Z(e_i,f(e_i))\\
&\le \sup_{x\in X} d_Z(x,f(x))\\
&< \varepsilon,
\end{align*}
so by Krein--Milman, $d_Z(x,f(x)) < \varepsilon$ for every $x\in X$. It follows that $f \colon X \to Y$ is a $2\varepsilon$-isometry. Similarly, so is $g$. For $y\in \partial_eY$, we also have
\[
d_Y(y,f\circ g(y)) \le d_Z(y,g(y)) + d_Z(g(y),f(g(y))) < 2\varepsilon,
\]
so $g$ is a $2\varepsilon$-inverse of $f$ (and similarly the other way round). In other words, we have shown that
\[
\gamma(X,Y)  = \gamma_q(\aff(X),\aff(Y)) \le 2\dq(\aff(X),\aff(Y)),
\]
completing the proof of topological equivalence of $\gamma$ and $\dq$.

Finally, let us show that $\gamma$ is complete. Suppose that $(X_n,d_n)_{n\in\nn}$ is a $\gamma$-Cauchy sequence of metric Bauer simplices. Passing to a subsequence if necessary, we may assume that, for every $m\ge n\in\nn$, $\gamma(X_n,X_m)$ is sufficiently small so that there exists an admissible metric on $\partial_eX_n \sqcup \partial_eX_m$ defined as in \eqref{eqn:iii}, with the property that the Hausdorff distance between $\partial_eX_n$ and $\partial_eX_m$ is less than $2^{-n}$. It follows that $(\partial_eX_n,d_n)$ is a Cauchy sequence in the metric space of isometry classes of compact metric spaces equipped with the Gromov--Hausdorff distance $\dist_{\mathrm{GH}}$ defined in \eqref{eqn:dgh}. This metric space is complete (a fact whose proof can be found in \cite[Section 1]{Rong:2010aa}), so there exists a compact metric space $(\widehat{X},\widehat{d})$ such that $\lim_{n\to\infty}\dist_{\mathrm{GH}}(\partial_eX_n,\widehat{X})=0$. Let $(X,d)=(\prob(\widehat{X}),W_{\widehat{d}})$ be the corresponding metric Bauer simplex. By \cite[Proposition 4.7]{Rieffel:2004aa}, $\dq(\aff(X_n),\aff(X)) \le \dist_{\mathrm{GH}}(\partial_eX_n,\partial_eX_)$, so $\lim_{n\to\infty}\dq(\aff(X_n),\aff(X))=0$ as well. Since we have just seen that $\gamma_q$ and $\dq$ are topologically equivalent, we conclude that $X$ is the $\gamma$-limit of the Cauchy sequence $(X_n)$, and hence that $\gamma$ is a complete. 
\end{proof}

\begin{remark} \label{rem:quig}
Alternatively, and more closely following \cite[Definition 1.3.2]{Rong:2010aa}, we could instead consider the distance $\widehat{\dist}_{GH}(X,Y)$ defined to be the infimum over all $\varepsilon>0$ for which there exists a (not necessarily continuous) affine map $f\colon X\to Y$ that is $\varepsilon$-isometric (just as in Definition~\ref{def:quig}) and $\varepsilon$-surjective (that is, $f(X)_\varepsilon=Y$). Let us call this version of $\dist_{GH}$ the \emph{Fukaya--Gromov--Hausdorff distance} or simply the \emph{Fukaya distance} (its nonaffine version appearing in \cite[Chapter \rm{I}]{Fukaya:1990aa}). Then we define the \emph{quantum Fukaya distance} between $\cs$-algebraic quantum metric Choquet simplices $(A,L)$ and $(B,M)$ to be
\[
\widehat{\dq}((A,L),(B,M)) = \widehat{\dist}_{GH}((T(A),\rho_L),(T(B),\rho_M)).
\]
This is strong enough to have used in the proof of Theorem~\ref{thm:quig}, and the conclusion would be the same (including the fact that $\widehat{\dq}$ is a quasimetric on metric Bauer simplices). So, on the set of affine isometric isomorphism classes of commutative $\cs$-algebraic quantum metric Bauer simplices, we have
\[
\widehat{\dq} \le \gamma_q  \le 2\dq 
\]
and the $\dq$-, $\widehat{\dq}$- and $\gamma_q$-topologies all coincide.
\end{remark}

An immediate consequence of Proposition~\ref{prop:isometry}\eqref{it:isometry1}, Theorem~\ref{thm:quig} and \cite[Theorem 13.5]{Rieffel:2004aa} (for the description of total boundedness) is our partial answer to Question~\ref{q:qmcs}.

\begin{corollary} \label{cor:complete}
$(\mathbb{QMBS},\dq)$ is complete, and any subset of $\mathbb{QMBS}$ whose elements have uniformly bounded radii and covering growths in the sense of \eqref{eqn:covering} is relatively compact.
\end{corollary}

As Li remarks in \cite[Section 4]{Li:2006aa}, it might be more consistent with the principles of noncommutative geometry to consider quantum distances that are defined at the level of \emph{function} spaces rather than \emph{state} spaces. This intuition led to the \emph{order-unit quantum Gromov--Hausdorff distance} \cite[Definition 4.2]{Li:2006aa} and its variants \cite[Definition 3.3]{Li:2003aa}, \cite[Definition 3.1]{Kerr:2009aa} designed to distinguish $\cs$-algebraic (not just order-theoretic) structure as discussed before Definition~\ref{def:quig}. By classification, $\gamma_q$ also admits a description at the function space level in the $K$-connected, classifiable $\cs$-QMBS category, the key point in this case being that tracial morphisms can be lifted to $\cs$-algebraic ones.

\begin{corollary} \label{cor:quig}
Let $\mathcal{K}$ denote the set of isometric isomorphism classes of $\cs$-algebraic quantum metric Bauer simplices $(A,L)$, where $A$ is a unital, $K$-connected, classifiable $\cs$-algebra and $(A,L)$ is equivalent to $(B,M)$ if there is a $^*$-isomorphism $\varphi \colon A \to B$ such that $M \circ \varphi = L$. Given a countable abelian group $G_1$ and a countable, simple, weakly unperforated ordered abelian group $(G_0,G_0^+)$ with distinguished order unit $g_0\in G_0^+$, write $G= (G_0,G_0^+,g_0,G_1)$ and
\[
\mathcal{K}_{G} = \{[(A,L)] \in \mathcal{K} \mid (K_0(A),K_0(A)_+,[1_A],K_1(A)) \cong G\}.
\]
Then, for every such $G$, $\gamma_q$ and $\widehat{\dq}$ are quasimetrics on $\mathcal{K}_{G}$ that are topologically equivalent  to $\dq$. Moreover, $\gamma_q$ admits the description
\begin{align*}
\gamma_q((A,L),(B,M)) = \inf&\{\varepsilon>0 \mid \text{ there exist } \varphi \in \emb(A,B), \psi \in \emb(B,A) \text{ with}\\
&T(\varphi) \in \isom(T(B),T(A)), T(\psi) \in \isom(T(A),T(B))\},
\end{align*}
where $\isom(T(B),T(A))$ and $\isom(T(A),T(B))$ are as in Definition~\ref{def:quig} relative to the metrics $\rho_L$ on $T(A)$ and $\rho_M$ on $T(B)$.
\end{corollary}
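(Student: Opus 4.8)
The plan is to reduce the statement to Theorem~\ref{thm:quig} and Remark~\ref{rem:quig} by passing to trace spaces, the one genuinely new ingredient being that, in the $K$-connected classifiable category, a continuous affine map between trace simplices can be promoted to a $\cs$-algebraic embedding.

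First I would record the relevant structural facts. Since each $(A,L)\in\mathcal{K}$ is a $\cs$-QMBS, $(T(A),\rho_L)$ is a metric Bauer simplex and $A^q$ is isomorphic as an order unit space to $\aff(T(A))$; by \eqref{eqn:atomic} (and Remark~\ref{rem:quig} for $\widehat{\dq}$), $\gamma_q$ and $\widehat{\dq}$ on $\mathcal{K}_G$ are literally $\gamma$ and $\widehat{\dist}_{GH}$ evaluated on the associated metric Bauer simplices. The assignment $[(A,L)]\mapsto[(T(A),\rho_L)]$ is well defined into the set of affine isometric isomorphism classes of metric Bauer simplices: a $^*$-isomorphism $\varphi\colon A\to B$ with $M\circ\varphi=L$ induces an affine homeomorphism $T(\varphi)\colon T(B)\to T(A)$ which is an isometry, directly from the definition \eqref{eqn:rho} of $\rho_L$ and $\rho_M$. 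It is moreover injective on $\mathcal{K}_G$: if $(T(A),\rho_L)$ and $(T(B),\rho_M)$ are isometrically affinely isomorphic then, since $(K_0(A),K_0(A)_+,[1_A],K_1(A))\cong G\cong(K_0(B),K_0(B)_+,[1_B],K_1(B))$ and $A,B$ are $K$-connected and classifiable, Remark~\ref{rem:dist0} provides a $^*$-isomorphism $A\to B$ inducing the isometry, which by \eqref{eqn:dirichlet} then satisfies $M\circ\varphi=L$. Consequently the quasimetric structures of $\gamma$ and $\widehat{\dist}_{GH}$ on metric Bauer simplices (Theorem~\ref{thm:quig} and Remark~\ref{rem:quig}) pull back to quasimetrics on $\mathcal{K}_G$, and their topological equivalence to $\dq$ follows from the same results together with $A^q\cong\aff(T(A))$ and the equivalence provided by the $\aff$ and $S(\cdot)$ functors.

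Next I turn to the displayed description of $\gamma_q$; write $D$ for its right-hand side. The inequality $\gamma_q\le D$ is immediate: if $\varphi\in\emb(A,B)$ and $\psi\in\emb(B,A)$ witness that $\varepsilon$ lies in the set defining $D$, then $T(\psi)\in\isom(T(A),T(B))$ and $T(\varphi)\in\isom(T(B),T(A))$ are continuous affine maps certifying $\gamma((T(A),\rho_L),(T(B),\rho_M))\le\varepsilon$. For the reverse inequality, fix $\varepsilon>\gamma_q$ and choose, as afforded by Definition~\ref{def:quig}, continuous affine maps $f\in\isom(T(A),T(B))$ and $g\in\isom(T(B),T(A))$ relative to the parameter $\varepsilon$. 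It suffices to realise $f$ as $T(\psi)$ for some $\psi\in\emb(B,A)$ and $g$ as $T(\varphi)$ for some $\varphi\in\emb(A,B)$: since $T(\psi)=f$ and $T(\varphi)=g$ then inherit the $\varepsilon$-isometric and $\varepsilon$-invertible properties of $f$ and $g$, the pair $(\varphi,\psi)$ places $\varepsilon$ in the set defining $D$, and letting $\varepsilon\downarrow\gamma_q$ gives $D\le\gamma_q$.

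The crux, and the only step I expect to require care, is this lifting. Given a continuous affine map $f\colon T(A)\to T(B)$, I would pair it on the $K$-theoretic side with the identity automorphism of $G$; because $B$ and $A$ are both $K$-connected, this tracial datum and this $K$-theoretic datum are automatically compatible (the ``enzyme'' of \S\ref{section:intro}), so together they form a unital morphism $\Ell(B)\to\Ell(A)$. The existence part of the classification of $^*$-homomorphisms between classifiable $\cs$-algebras \cite{Carrion:wz} then supplies a unital $^*$-homomorphism $\psi\colon B\to A$ realising this morphism exactly, in particular with $T(\psi)=f$; since $B$ is simple, $\psi$ is injective, so $\psi\in\emb(B,A)$. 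The symmetric argument, with the roles of $A$ and $B$ exchanged and $g$ in place of $f$, produces $\varphi\in\emb(A,B)$ with $T(\varphi)=g$, completing the proof. What little friction there is lies in checking that the pair $(\mathrm{id}_G,f)$ genuinely meets the positivity, order-unit and pairing-compatibility requirements of a morphism of Elliott invariants; but this is exactly the situation $K$-connectedness and the hypothesis $A,B\in\mathcal{K}_G$ are tailored to, the $K_0$-component being an order isomorphism fixing the class of the unit and the pairing condition being vacuous because $K$-connectedness renders the tracial pairings trivial.
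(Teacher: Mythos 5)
Your proposal is correct and follows essentially the same route as the paper: reduce the quasimetric and topological-equivalence claims to Theorem~\ref{thm:quig}, Remark~\ref{rem:quig} and Remark~\ref{rem:dist0} via the trace-space functor, and obtain both positive definiteness on $\mathcal{K}_G$ and the displayed description of $\gamma_q$ by using $K$-connectedness to pair a continuous affine tracial map with the $K$-theoretic data and then lifting through the classification of morphisms in \cite{Carrion:wz}. The paper merely states this lifting in one line (packaging the data into a morphism of $\underline{K}T_u$ via \cite[Theorem 3.9]{Carrion:wz} and applying \cite[Theorem B]{Carrion:wz}), whereas you spell it out, including the simplicity-implies-injectivity point; the substance is identical.
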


\begin{proof}
The only thing left to observe is that, if $[(A,L)],[(B,M)] \in \mathcal{K}_G$ are such that $\widehat{\dq}((A,L),(B,M))=0$, then there is a $^*$-isomorphism $\varphi \colon A \to B$ such that $M \circ \varphi = L$. (Everything else has been covered by Remark~\ref{rem:quig}, Theorem~\ref{thm:quig} and the discussion preceding Definition~\ref{def:quig}.) But this follows from classification (which also provides the stated description of $\gamma_q$). Since $\widehat{\dist}_{GH}((T(A),\rho_L),(T(B),\rho_M)) = 0$, there is an affine isometric isomorphism $\alpha\colon T(B) \to T(A)$. Since $A$ and $B$ are both in $\mathcal{K}_G$, there is also an isomorphism of $K$-theory that by $K$-connectedness is compatible with $\alpha$. We can package these into a morphism of the `total invariant' $\underline{K}T_u$ (in the notation of \cite{Carrion:wz}, and appealing to \cite[Theorem 3.9]{Carrion:wz}) and then lift to the required $^*$-isomorphism $\varphi \colon A \to B$ (by \cite[Theorem B]{Carrion:wz}). Moreover, we have $M \circ \varphi = L$ by Proposition~\ref{prop:isometry}\eqref{it:isometry1} because $T(\varphi)$ is isometric.
\end{proof}

\vspace{2mm}

\section{Quantum systems} \label{section:qcp}

\subsection{$\cs$-algebras arising as tracial quantum crossed products} \label{subsection:range}

In \cite{Jacelon:2022wr}, model $\cs$-dynamical systems $(A,\alpha)$, with $A$ a classifiable $\cs$-algebra, were built to witness either a desired statistical feature of the automorphism (or tracially nondegenerate endomorphism) $\alpha$ or a prescribed isomorphism class of the simplex $T(A\rtimes_\alpha\zz)$. The technique in both cases was the same: start with a favourable topological dynamical system $(X,h)$, construct a $K$-connected, classifiable $\cs$-algebra $A$ with $\partial_e(T(A))\cong X$ (as in Example~\ref{ex:cqmcs}.\ref{it:ex2}), then use classification to lift $h$ to $\alpha\in\End(A)$ (an automorphism if $h$ is a homeomorphism). The lift $\alpha$ can always be chosen to have \emph{finite Rokhlin dimension}, which guarantees that $A\rtimes_{\alpha_h}\nn$ is itself classifiable with $T(A\rtimes_{\alpha_h}\nn) \cong \prob(X)^h$, the simplex of $h$-invariant Borel probability measures on $X$ (see \cite[Lemma 4.8]{Jacelon:2022wr}), and is in fact generic among nondegenerate endomorphisms of a separable, $\js$-stable $\cs$-algebra (see \cite{Hirshberg:2015wh,Szabo:2019te} and also \cite[Definition 4.6, Lemma 4.7]{Jacelon:2022wr}). We consider this procedure sufficiently fundamental to merit its cementation as follows.

\begin{definition} \label{def:qcp}
Let $X$ be a compact metrisable space and $h\colon X\to X$ continuous (often a homeomorphism but not necessarily). A \emph{tracial quantum system associated with $(X,h)$} consists of a $\cs$-algebra $A_X$ with $\partial_e(T(A_X))$ homeomorphic to $X$, together with a tracially nondegenerate endomorphism $\alpha_h$ of $A_X$ such that $T(\alpha_h)|_{\partial_e(T(A_X))}=h$ under the homeomorphism $\partial_e(T(A_X))\cong X$. The corresponding \emph{tracial quantum crossed product associated with $(X,h)$} is the $\cs$-algebra $A_X\rtimes_{\alpha_h}\nn$. If $\alpha$ is an automorphism, we may for emphasis call the system \emph{invertible}. If $A_X$ is $K$-connected and classifiable and has continuous scale (see Remark~\ref{rem:cqmcs}) and $\alpha_h$ has finite Rokhlin dimension, then we call $(A_X,\alpha_h)$ a \emph{Stein system associated with $(X,h)$}.
\end{definition}

\begin{remark} \label{rem:endo}
As in \cite[Section 4.4]{Jacelon:2022wr}, $A\rtimes_{\alpha}\nn$ denotes the (reduced) crossed product of a $\cs$-algebra $A$ by an endomorphism $\alpha$ in the sense of \cite{Cuntz:1982uk}. Namely, it is the cutdown of $\underset{\to}{A}\rtimes_{\underset{\to}{\alpha}}\zz$ by $1_{M(A)}$, where $\underset{\to}{\alpha}$ is the extension of $\alpha$ to an automorphism of
\[
\underset{\to}{A} := \varinjlim\left(\begin{tikzcd}[column sep=small, row sep=small]A \arrow[r,"\alpha"] & A \arrow[r,"\alpha"] & A \arrow[r,"\alpha"] & \ldots\end{tikzcd}\right)
\]
and $A$ is mapped to $\underset{\to}{A}$ as the first component of the inductive limit (injectively if $\alpha$ is injective, which for us is always the case). As explained in \cite[Section 3]{Stacey:1993uc}, $A\rtimes_{\alpha}\nn$ can equivalently be described as a $\cs$-algebra in which the action on $A$ is implemented by an isometry, and which is universal for covariant representations. In this picture, $A\rtimes_{\alpha}\nn$ is densely spanned by elements of the form $at^mt^{*n}$, where $a\in A$, $m,n\in\nn_0$ and $t$ is an isometry such that $tbt^*=\alpha(b)$ for every $b\in A$. If $A$ and $\alpha$ are unital, then $A\rtimes_{\alpha}\nn = \underset{\to}{A}\rtimes_{\underset{\to}{\alpha}}\zz$. If $\alpha$ is invertible, then $A\rtimes_{\alpha}\nn \cong A\rtimes_{\alpha}\zz$. If $\alpha$ has finite Rokhlin dimension, then (by design) so does $\underset{\to}{\alpha}$. Allowing for endomorphisms means that we can capture a wider range of examples to demonstrate results such as Proposition~\ref{prop:breiman} and Theorem~\ref{thm:metricdeform}.\eqref{it:wave2}, but it is also sometimes a necessary part of our analysis. Indeed, the classification theory to which we appeal in Theorem~\ref{thm:cf} cannot guarantee us a continuous field of invertible tracial quantum systems, even if, as in Section~\ref{subsection:inherit}, the topological systems are invertible themselves.
\end{remark}

\begin{theorem} \label{thm:range}
Every $\js_0$-stable classifiable $\cs$-algebra is stably isomorphic to a tracial quantum crossed product associated with a minimal homeomorphism of a zero-dimensional space.
\end{theorem}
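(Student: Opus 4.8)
The plan is to mimic the structure of the proof of \cite[Theorem B]{Jacelon:2022wr}, replacing the range $\mathcal W$ (i.e.\ $KK$-contractible targets) with the broader $\js_0$-stable range. First I would record what the Elliott invariant of a $\js_0$-stable classifiable $\cs$-algebra $D$ looks like: by \cite{Gong:2020vg}, $D$ is classified by $(K_0(D), K_1(D), T(D), r_D)$ where the pairing map $\rho_D \colon K_0(D) \to \aff(T(D))$ vanishes (this is exactly the $K$-connectedness forced by $\js_0$-absorption, as noted in \S~\ref{section:intro}), and since we only need the conclusion up to stable isomorphism I can work with the stabilised invariant and ignore the order unit. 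So the target data to be realised is: a pair of countable abelian groups $(G_0, G_1)$, a metrisable Choquet simplex $\Delta$, and a trivial pairing.

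Next I would set up the dynamical source. The key input is that every metrisable Choquet simplex $\Delta$ arises as $\prob(Z)^h$ for a minimal homeomorphism $h$ of a zero-dimensional compact metrisable space $Z$ — this is the standard result (going back to the Downarowicz realisation of simplices as invariant-measure simplices of Cantor minimal systems, and more generally minimal subshifts/odometers; the same ingredient used in \cite{Jacelon:2022wr}). Then I would invoke the Stein-system machinery: starting from $(Z,h)$, build a $K$-connected classifiable $\cs$-algebra $A_Z$ with $\partial_e(T(A_Z)) \cong Z$ and, crucially, with $K_*(A_Z)$ prescribed so that after passing to the quantum crossed product the $K$-theory comes out to be $(G_0,G_1)$; lift $h$ to a tracially nondegenerate endomorphism (here an automorphism, since $h$ is a homeomorphism) $\alpha_h \in \Aut(A_Z)$ of finite Rokhlin dimension using classification, exactly as in Definition~\ref{def:qcp} and the discussion preceding it. Finite Rokhlin dimension plus $\js$-stability of $A_Z$ then guarantees (via \cite{Hirshberg:2015wh,Szabo:2019te}) that $A_Z \rtimes_{\alpha_h} \nn = A_Z \rtimes_{\alpha_h} \zz$ is again simple, separable, nuclear, $\js$-stable and satisfies the UCT, i.e.\ classifiable. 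Its trace simplex is $\prob(Z)^h \cong \Delta$ by construction, and its pairing is trivial because $K$-connectedness is inherited by the crossed product (again as in \cite{Jacelon:2022wr}), so it is $\js_0$-stable. A Pimsner–Voiculescu computation controls $K_*(A_Z \rtimes \zz)$ in terms of $K_*(A_Z)$ and $\alpha_{h*}$, and one arranges the seed $K$-theory and the lift so that this equals $(G_0,G_1)$.

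The main obstacle is the $K$-theory bookkeeping: unlike the $\mathcal W$-stable case where the target $K$-theory is simply zero and there is nothing to match, here one must realise an \emph{arbitrary} pair $(G_0,G_1)$ as the $K$-theory of a crossed product of a carefully chosen classifiable $A_Z$ by a carefully chosen automorphism over the \emph{fixed} zero-dimensional base $(Z,h)$ coming from the simplex realisation — and the freedom in choosing $\alpha_{h*}$ on $K_*(A_Z)$ is constrained by $K$-connectedness and by the requirement that $\alpha_h$ descend to $h$ on the boundary. The way around this is that in the Stein-system construction one has essentially free choice of the ordered $K_0$-group (any countable weakly unperforated simple dimension group with trivial pairing, cf.\ Example~\ref{ex:cqmcs}.\ref{it:ex2}) and of $K_1$, independently of the boundary dynamics; one then takes $\alpha_h$ to act trivially (or in a controlled way) on $K_*(A_Z)$, so that the Pimsner–Voiculescu sequence degenerates and $K_*(A_Z\rtimes\zz)$ becomes a computable extension of $K_*(A_Z)$ by $K_{*+1}(A_Z)$, which can be solved for $(G_0,G_1)$ by a suitable initial choice. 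Checking that all of this can be done simultaneously — matching $K$-theory, fixing the trace simplex, keeping $K$-connectedness, and retaining finite Rokhlin dimension — is the crux, but each ingredient is available from the cited literature, so the argument is an assembly rather than a new construction, exactly as the statement ("the same argument yields a more general result") suggests.
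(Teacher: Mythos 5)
Your overall architecture matches the paper's: realise the simplex $T(B)$ as $\prob(Z)^h$ for a minimal zero-dimensional system via Downarowicz, build a $\js_0$-stable classifiable observable algebra with prescribed $K$-theory, lift $h$ to an automorphism of finite Rokhlin dimension, compute $K_*$ of the crossed product by Pimsner--Voiculescu, check $K$-connectedness via surjectivity of $\iota_*$ and restriction of traces, and conclude by classification of $\js_0$-stable algebras. However, the one step where you have to do actual work --- the $K$-theory bookkeeping --- is exactly where your proposal breaks down. If $\alpha_*$ acts trivially on $K_*(A_Z)$, the Pimsner--Voiculescu sequence gives short exact sequences $0 \to K_i(A_Z) \to K_i(A_Z\rtimes_\alpha\zz) \to K_{1-i}(A_Z) \to 0$ for $i=0,1$, so $G_0$ must be an extension of $K_1(A_Z)$ by $K_0(A_Z)$ \emph{and simultaneously} $G_1$ an extension of $K_0(A_Z)$ by $K_1(A_Z)$. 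This symmetric constraint is not solvable in general: already $G_1=0$ forces $K_0(A_Z)=K_1(A_Z)=0$ and hence $G_0=0$, so you cannot even reach $(G_0,G_1)=(\zz,0)$. The hedge ``or in a controlled way'' is where the missing idea lives.

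The paper's fix is R{\o}rdam's realisation lemma (\cite[Proposition 3.5]{Rordam:1995fr}): for arbitrary countable abelian $G_0,G_1$ there is a countable abelian group $H$ with an automorphism $\kappa$ such that $\ker(\id-\kappa)\cong G_1$ and $\coker(\id-\kappa)\cong G_0$. One then takes $K_0(A_Z)\cong H$ and $K_1(A_Z)=0$ (available with continuous scale and the required tracial data by \cite[Theorems 7.11, 13.1]{Gong:2020vg} combined with Downarowicz), and lifts $h$ to $\alpha$ with $K_0(\alpha)=\kappa$ --- permitted by $K$-connectedness, which decouples the $K$-theoretic and tracial parts of the lift. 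With $K_1(A_Z)=0$ the Pimsner--Voiculescu sequence collapses to give $K_0(A_Z\rtimes_\alpha\zz)\cong\coker(\id-\kappa)\cong G_0$ and $K_1(A_Z\rtimes_\alpha\zz)\cong\ker(\id-\kappa)\cong G_1$ with no extension problem to solve. So the gap is concrete but localised: you need to import R{\o}rdam's lemma and push the nontriviality into $\kappa=K_0(\alpha)$ rather than into an extension of $K_*(A_Z)$ by $K_{*+1}(A_Z)$.
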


\begin{proof}
The proof is the same as that of \cite[Theorem 5.1]{Jacelon:2022wr}, except that the observable space $A$ is chosen not to be $KK$-contractible, but rather a suitable $\js_0$-stable algebra. To wit, let $B$ be a $\js_0$-stable classifiable $\cs$-algebra (the $\cs$-algebra whose isomorphism class we wish to attain). We may assume up to stable isomorphism that $B$ has continuous scale. As in the proof of \cite[Proposition 3.5]{Rordam:1995fr}, let $H$ be a countable abelian group that admits an automorphism $\kappa\colon H\to H$ with $\ker(\id-\kappa)\cong K_1(B)$ and $\coker(\id-\kappa)\cong K_0(B)$. (Note that the assumption in \cite[Proposition 3.5]{Rordam:1995fr} that $G_1$ be torsion free is to ensure that $H$ is torsion free, but that is not a concern for us here.)  By \cite[Theorem 7.11]{Gong:2020vg}, there is a $\js_0$-stable classifiable $\cs$-algebra $A$ with continuous scale such that $K_0(A)\cong H$ and $K_1(A)=0$, and (by \cite[Theorem 3]{Downarowicz:1991te} and \cite[Theorem 13.1]{Gong:2020vg}) such that $A$ serves as the observable space of a minimal, zero dimensional system $(X,h)$ whose simplex of invariant measures is affinely homeomorphic to $T(B)$. We can moreover choose the automorphism $\alpha=\alpha_h\colon A\to A$ that lifts $h$ to have finite Rokhlin dimension (see \cite[Lemma 4.7]{Jacelon:2022wr}), so that the crossed product $A\rtimes_\alpha\zz$ has continuous scale (see the proof of \cite[Theorem 5.1]{Jacelon:2022wr}) and is classifiable with $T(A\rtimes_\alpha\zz)\cong T(B)$ (see \cite[Lemma 4.8]{Jacelon:2022wr}). The Pimsner--Voiculescu sequence
\begin{equation} \label{eqn:pv}
\begin{tikzcd}
K_0(A) \arrow[r,"1-\alpha_*"] & K_0(A) \arrow[r,"\iota_*"] & K_0(A\rtimes_\alpha\zz)\arrow[d]\\
K_1(A\rtimes_\alpha\zz) \arrow[u] & K_1(A) \arrow[l,"\iota_*"] & K_1(A) \arrow[l,"1-\alpha_*"]
\end{tikzcd}
\end{equation}
(see \cite[Theorem 2.4]{Pimsner:1980yu} or \cite[Theorem 10.2.1]{Blackadar:1998qf}) gives $K_0(A\rtimes_\alpha\zz)\cong K_0(B)$ and $K_1(A\rtimes_\alpha\zz)\cong K_1(B)$. Since the map $\iota_*\colon K_0(A) \to A\rtimes_\alpha\zz$ is surjective and every trace on $A\rtimes_\alpha\zz$ restricts to one on $A$, we also conclude that $A\rtimes_\alpha\zz$ is $K$-connected. It then follows from \cite[Theorem 15.6]{Gong:2020vg} that $A\rtimes_\alpha\zz\cong B$.
\end{proof}

Theorem~\ref{thm:range} provides an affirmative answer to the stably projectionless, $K$-connected case of the following pressing question.

\begin{question} \label{q:range}
Is every stably finite classifiable $\cs$-algebra stably isomorphic to a crossed product of a classifiable $\cs$-algebras by an action of the integers?
\end{question}

\begin{remark} \label{rem:groups}
Definition~\ref{def:qcp} makes sense not just for single endomorphisms $h$ but more generally for group actions $G\to\Aut(C(X))$. But whereas the existence of a noncommutative observable space $A$ and lifted action $\alpha_h$ is guaranteed for integer actions, this is not so clear in the general setting. A relevant question that should be addressed is, when can a group action on the total invariant $\underline{K}T_u$ of a classifiable $\cs$-algebra be lifted to an action on the algebra itself? This question has a positive answer if $G$ is a free group (a fact which is used in \cite{Gardella:2022ab} and Theorem~\ref{thm:free}) or if $G$ is finite and $A$ absorbs the UHF algebra $M_{|G|^\infty}$ (see \cite[Corollary 9.13]{Carrion:wz}, which generalises \cite[Corollary 2.13]{Barlak:2017aa}, and also Example~\ref{ex:sphere}). At least in these cases, then, the notion of a tracial quantum crossed product is a meaningful one.
\end{remark}

Because free groups $\ff_n$ are not amenable for $n\ge2$, tracial quantum crossed products $A_X\rtimes_{\alpha_h,r}\ff_n$ associated with actions $h \colon \ff_n \to \Aut(C(X))$ will not necessarily be nuclear. But if $(A_X,\alpha_h)$ is a Stein system (meaning that $A_X$ has continuous scale, is $K$-connected and classifiable with $\partial_eT(A_X) \cong X$, and $\alpha_h \colon \ff_n \to \Aut(A_X)$ is an action with finite Rokhlin dimension such that $T(\alpha_h)|X=h$), then there is still much that can be said about the structure of the reduced crossed product $A_X\rtimes_{\alpha_h,r}\ff_n$ (see Theorem~\ref{thm:free} below). Note moreover that, by the results of \cite[Section 11]{Szabo:2019te}, we can in fact find an $\alpha_h$ with Rokhlin dimension $\le1$ (a property that is shown in \cite[Theorem 11.14]{Szabo:2019te} to be generic for actions of finitely generated, residually finite groups on separable, $\js$-stable $\cs$-algebras). As in the proof of Theorem~\ref{thm:range} (and discussed further in Corollary~\ref{cor:dynqmcs}), the results of \cite{Elliott:2020wc,Gong:2020vg} also give us the option of choosing $A_X$ to be a $\js_0$-stable or $\mathcal{W}$-stable stably projectionless classifiable $\cs$-algebra (with $\mathcal{W}$-stability being equivalent to $KK$-contractibility, that is, trivial $K$-theory).

\begin{theorem} \label{thm:free}
Suppose that $h \colon \ff_n \to \Aut(C(X))$ is a uniquely ergodic action of the free group $\ff_n=\langle g_1,\dots,g_n \rangle$, $n\ge 2$, on a compact metrisable space $X$. Let $(\mathcal{W}_X,\alpha_h)$ be a $\mathcal{W}$-stable Stein system associated with $(X,h)$. Then, the reduced crossed product $\mathcal{W}_X\rtimes_{\alpha_h,r}\ff_n$ is a simple, separable, exact, nonnuclear, monotracial, $KK$-contractible $\cs$-algebra that satisfies the UCT.
\end{theorem}

\begin{proof}
Separability of $\mathcal{W}_X$ implies that $\mathcal{W}_X\rtimes_{\alpha_h,r}\ff_n$ is also separable. The UCT holds by the results of \cite{Higson:2001aa,Meyer:2006aa}, because $\ff_n$ has the Haagerup property \cite{Haagerup:1978aa}. The Pimsner--Voiculescu sequence for $\ff_n$-actions (see \cite[Theorem 3.5]{Pimsner:1982aa} or \cite[Theorem 10.8.1]{Blackadar:1998qf}) implies that $K_*\left(\mathcal{W}_X\rtimes_{\alpha_h,r}\ff_n\right)=0$, which implies that $\mathcal{W}_X\rtimes_{\alpha_h,r}\ff_n$ is $KK$-contractible. Simplicity is a consequence of Kishimoto's theorem \cite[Theorem 3.1]{Kishimoto:1981tg} because finite Rokhlin dimension implies strong outerness (by \cite[Theorem 7.8 (3)$\Rightarrow$(1)]{Gardella:2021tb}, which can be adapted to the nonunital setting as in \cite[Lemma 4.8]{Jacelon:2022wr}).

Let us show that $\mathcal{W}_X\rtimes_{\alpha_h,r}\ff_n$ has a unique trace (which is necessarily bounded because all traces on $\mathcal{W}_X$ are bounded and any approximate unit for $\mathcal{W}_X$ passes to an approximate unit for $\mathcal{W}_X\rtimes_{\alpha_h,r}\ff_n$). Since $h$ is uniquely ergodic, there is a unique $\alpha_h$-invariant trace $\tau_\mu$ corresponding to the unique $h$-invariant measure $\mu$, which extends to a trace on $\mathcal{W}_X\rtimes_{\alpha_h,r}\ff_n$ by composition with the canonical conditional expectation onto $\mathcal{W}_X$. Following \cite[Proposition 2.3]{Liao:2016ui}, we use finite Rokhlin dimension to show that this extension is unique. As in \cite{Liao:2016ui}, it suffices to prove that $\tau(au_g)=0$ for every $\tau\in T(\mathcal{W}_X\rtimes_{\alpha_h,r}\ff_n)$, every $a$ in the unit ball of $\mathcal{W}_X$ and every $g \in \setminus \{1_{\ff_n}\}$, where $u_g$ is the unitary implementing the action of $\alpha_h(g)$. We would then know that $\tau$ agrees with the canonical extension of $\tau_\mu$ on finite sums, that is, $\tau(\sum_{g\in\ff_n}a_gu_g)=\tau_\mu(a_{1_{\ff_n}})$, and hence that the canonical extension is unique. Fix such a $(\tau,a,g)$. Let $\varepsilon>0$, let $H\le\ff_n$ be a normal subgroup of finite index such that $g\notin H$, and let $\delta=\frac{\varepsilon}{2|\ff_n/H|+1}$. We may assume that the Rokhlin dimension of $\alpha_h$ is at most $1$. By \cite[Definition 5.8 and Proposition 5.5]{Szabo:2019te}, we can find positive contractions $(f^{(l)}_{\overline k})_{\overline k \in \ff_n/H}^{l\in\{0,1\}}$ in $\mathcal{W}_X$ such that
\begin{itemize}
\item $\left\|\left(\sum_{l=0}^1\sum_{\overline k \in \ff_n/H}f^{(l)}_{\overline k}\right)a-a\right\| < \delta$, and
\item $\left\|\left(f^{(l)}_{\overline k}\right)^{1/2}a\alpha_h(g)\left(\left(f^{(l)}_{\overline k}\right)^{1/2}\right)\right\| < \delta$ for $l\in\{0,1\}$ and $\overline k\in\ff_n/H$.
\end{itemize}
Then,
\begin{align*}
\tau(au_g) &\le \left| \sum_{l=0}^1\sum_{\overline k \in \ff_n/H}\tau\left(au_gf^{(l)}_{\overline k}\right) \right| + \delta\\
&\le \sum_{l=0}^1\sum_{\overline k \in \ff_n/H} \left| \tau\left(\left(f^{(l)}_{\overline k}\right)^{1/2}au_g\left(f^{(l)}_{\overline k}\right)^{1/2}\right) \right| + \delta\\
&= \sum_{l=0}^1\sum_{\overline k \in \ff_n/H} \left| \tau\left(\left(f^{(l)}_{\overline k}\right)^{1/2}a\alpha_h(g)\left(\left(f^{(l)}_{\overline k}\right)^{1/2}\right) u_g \right) \right| + \delta\\
& < (2|\ff_n/H|+1)\delta = \varepsilon.
\end{align*}
Since $\varepsilon$ is arbitrary, it follows that $\tau(au_g)=0$ and hence that $\mathcal{W}_X\rtimes_{\alpha_h,r}\ff_n$ has a unique trace.

To show exactness, we extend $\alpha_h$ to an action on the minimal unitisation $\widetilde{\mathcal{W}_X}$ of $\mathcal{W}_X$, still denoted by $\alpha_h\colon\ff_n\to\Aut(\widetilde{\mathcal{W}_X})$. The reduced crossed product $\widetilde{\mathcal{W}_X}\rtimes_{\alpha_h,r}\ff_n$ is isomorphic to the reduced amalgamated free product $\free_{i=1}^n \left(\widetilde{\mathcal{W}_X}\rtimes_{\alpha_h(g_i)}\zz,E_i\right)$, where $E_i$ is the conditional expectation of $\widetilde{\mathcal{W}_X}\rtimes_{\alpha_h(g_i)}\zz$ onto $\widetilde{\mathcal{W}_X}$. By \cite[Theorem 3.2]{Dykema:2004aa}, it follows that $\widetilde{\mathcal{W}_X}\rtimes_{\alpha_h,r}\ff_n$ is exact, and hence so is the subalgebra $\mathcal{W}_X\rtimes_{\alpha_h,r}\ff_n$.

Finally, let us show that $\mathcal{W}_X\rtimes_{\alpha_h,r}\ff_n$ is not nuclear, or equivalently by \cite[Th\'{e}or\`{e}me 4.5]{Delaroche:1987aa}, that $\alpha_h$ is not an amenable action. To see this, we proceed as in the proof of \cite[Theorem 2.5 (2)$\Rightarrow$(6)]{Gardella:2022ab}. More precisely, suppose for a contradiction that $\alpha_h\colon\ff_n\to\Aut(\mathcal{W}_X)$ is an amenable action, meaning that the universal enveloping $\mathrm{W}^*$-dynamical system of $\alpha_h\colon\ff_n\to\Aut(\mathcal{W}_X)$ described in \cite[Section 2.6]{Ozawa:2021aa} is amenable in the sense of \cite[Theorem 2.1]{Ozawa:2021aa}. Then, so is the induced action on the minimal unitisation $\widetilde{\mathcal{W}_X}$ (because the enveloping von Neumann system is the same as for the action on $\mathcal{W}_X$). By the characterisation of amenability in terms of the quasi-central approximation property (see \cite[Theorem 3.2]{Ozawa:2021aa}), we would then in particular have the following: for any $\varepsilon>0$ and any fixed finite set $K\subseteq\ff_n$, there exists a finitely supported element $\xi\in L^2(\ff_n,\widetilde{\mathcal{W}_X})$ such that
\[
\|\xi\|\le1 \quad \text{and} \quad \|\langle\xi,\widehat\alpha_h(g)(\xi)\rangle-1\|<\varepsilon \:\text{ for every $g\in K$.}
\]
Here, $\widehat\alpha_h$ denotes the induced diagonal action $(\widehat\alpha_h(g)(\xi))(k):=\alpha_h(g)(\xi(g^{-1}k))$ on the space $L^2(\ff_n,\widetilde{\mathcal{W}_X})$, which is equipped with its usual Hilbert $\cs$-module structure. Define $\theta\colon\ff_n\to C(X)$ by
\[
\theta(g)(\tau)=\tau(\langle\xi,\widehat\alpha_h(g)(\xi)\rangle) \:\text{ for $g\in\ff_n,\tau\in X=\partial_eT(\mathcal{W}_X)\subseteq T(\widetilde{\mathcal{W}_X})$.}
\]
This function is finitely supported and is of `positive type with respect to $h$' meaning that, for every finite subset $F\subseteq\ff_n$, the $C(X)$-valued matrix $(h(g)(\theta(g^{-1}k)))_{g,k\in F}$ is positive. It also satisfies
\[
\|\theta(g)-1\| = \sup_{\tau\in\partial_eT(\mathcal{W}_X)} |\tau(\langle\xi,\widehat\alpha_h(g)(\xi)\rangle-1)| \le \|\langle\xi,\widehat\alpha_h(g)(\xi)\rangle-1\| <\varepsilon
\]
for every $g\in K$, and $\|\theta(1_{\ff_n})\|\le\|\langle \xi,\xi\rangle\|=\|\xi\|^2\le1$. By \cite[Theorem 2.4]{Gardella:2022ab} (which is a restatement of results from \cite{Delaroche:1987aa}), the existence of such a $\theta$ for arbitrary $K$ and $\varepsilon$ implies that $h \colon \ff_n \to \Aut(C(X))$ is an amenable action. Since there exists an $h$-invariant measure on $X$, it then follows from \cite[Lemma 2.2]{Gardella:2023aa} that $\ff_n$ is amenable, which is the desired contradiction.
\end{proof}

Note that every $\ff_n$ does admit a uniquely ergodic action on some $X$. This could be a trivial extension of a uniquely ergodic $\zz$-action (such as the trivial action on a singleton) or something more interesting: in \cite[Section 2]{Cortez:2008aa} (a reference for which we thank Michal Doucha), it is explained how every discrete, finitely generated, residually finite group $G$ admits an action on a profinite completion $\overrightarrow{G}$, a compact group on which the only $G$-invariant measure is the Haar measure. We might think of the $\cs$-algebras $\mathcal{W}_X\rtimes_{\alpha_h,r}\ff_n$ associated with such actions as nonamenable analogues of $\mathcal{W}$. It would certainly be interesting to investigate other structural properties of these algebras, in particular, their stable rank.

\subsection{Inheritance of QMCS structure} \label{subsection:inherit}

Recall from Remark~\ref{rem:endo} the automorphic extension $(\underset{\to}{A},\underset{\to}{\alpha})$ of an endomorphism $\alpha \in \End(A)$. Here in Section~\ref{subsection:inherit} and later in Section~\ref{subsection:dynamics}, we consider only \emph{invertible} topological dynamical systems (that is, homeomorphisms) $h \colon X \to X$, and (except in Corollary~\ref{cor:dynqmcs}, where stably projectionless models come in handy) we are primarily interested in \emph{unital} tracial quantum systems $(A_X,\alpha_h)$ associated with $(X,h)$ (that is, we require $A=A_X$ to be unital and $\alpha=\alpha_h$ to be a unital $^*$-monomorphism). In this case, we have $A \rtimes_\alpha \nn = \underset{\to}{A}\rtimes_{\underset{\to}{\alpha}}\zz$,  and the canonical inclusion map $A \hookrightarrow \underset{\to}{A}$ induces affine homeomorphisms $T(\underset{\to}{A}) \cong T(A)$ and $T(\underset{\to}{A})^{\underset{\to}{\alpha}} \cong T(A)^{\alpha}$.

The existence of a `natural' CQMS structure on the crossed product $A\rtimes_\alpha\nn$ of a $\cs$-algebraic CQMS $(A,L_A)$ by an automorphism (or tracially nondegenerate embedding) $\alpha$ is a delicate issue, one that is taken up, for example, in \cite{Kaad:2021aa}. But if $(A_X,\alpha_h)$ is a Stein system associated with $(X,h)$ such that $A=A_X$ is a $\cs$-QMBS, then a very natural choice presents itself. Actually, `all' that we require of $\alpha=\alpha_h$ is that
\begin{equation} \label{eqn:ue}
T(\underset{\to}{A}\rtimes_{\underset{\to}{\alpha}}\zz) \cong T(A)^{\alpha} \cong \prob(X)^h
\end{equation}
via the inclusion map $A\hookrightarrow \underset{\to}{A}\rtimes_{\underset{\to}{\alpha}}\zz$. This holds if $\alpha$ has finite Rokhlin dimension (see \cite[Lemma 4.8]{Jacelon:2022wr}) and more generally in the following context.

\begin{definition} \label{def:ue}
We say that a $\cs$-dynamical system $(A,\alpha)$ has the \emph{unique tracial extension property} if every $\alpha$-invariant trace on $A$ admits a unique extension to $\underset{\to}{A}\rtimes_{\underset{\to}{\alpha}}\zz$.
\end{definition}

For a complete analysis of this property for automorphic actions of discrete groups on unital $\cs$-algebras, see \cite{Ursu:2021wp}.

\begin{proposition} \label{prop:inherit}
Let $(X,\rho)$ be a compact metric space, $h \colon X \to X$ a homeomorphism and $(A_X,\alpha_h)$ a unital tracial quantum system associated with $(X,h)$, with $A=A_X$ equipped with the $\cs$-algebraic quantum metric Bauer simplex structure determined by $\rho$ (see Section~\ref{subsection:qmbs}). Suppose that $(A_X,\alpha_h)$ has the unique tracial extension property and that $A_X\rtimes_{\alpha_h}\nn$ is algebraically simple (both of which hold if $A_X$ is simple and $\alpha_h$ has finite Rokhlin dimension). Then, $A_X\rtimes_{\alpha_h}\nn$ is a $\cs$-algebraic quantum metric Choquet simplex when $T(A_X\rtimes_{\alpha_h}\nn)\cong\prob(X)^h$ is equipped with the Wasserstein metric $W_{\rho}|_{\prob(X)^h}$ (see \eqref{eqn:wass}) and the seminorm $L= L_{W_{\rho}}\colon A_X\rtimes_{\alpha_h}\nn\to[0,\infty]$ (see \eqref{eqn:L}).
\end{proposition}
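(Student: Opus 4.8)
The plan is to verify the four conditions of Definition~\ref{def:cqmcs} for the pair $(B,L)$, where $B := A\rtimes_{\alpha_h}\nn = \underset{\to}{A}\rtimes_{\underset{\to}{\alpha}}\zz$ (a separable, unital $\cs$-algebra). The first condition is immediate from the hypothesis that $B$ is algebraically simple: Proposition~\ref{prop:to}\eqref{it:to1} gives that $B^q$ is tracially ordered, and Proposition~\ref{prop:quotient} then gives that $(B^q,q(1))$ is a complete order unit space. Self-adjointness and lower semicontinuity of $L$ (part of the second condition) are formal, since $L=L_{W_\rho}$ is defined by \eqref{eqn:L} and is therefore a pointwise supremum of norm-continuous, $\ast$-invariant functions (cf.\ Remark~\ref{rem:lsc}). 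Dense finiteness of $L$ — the remainder of the second condition — is the substantive point, and I postpone it.

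Next I would pin down the trace space. By the unique tracial extension property and the discussion around \eqref{eqn:ue}, restriction along $A\hookrightarrow\underset{\to}{A}\hookrightarrow B$ yields an affine homeomorphism $T(B)\cong T(A)^{\alpha}\cong\prob(X)^h$, under which the metric $W_\rho|_{\prob(X)^h}$ on $T(B)$ is exactly the restriction of the Wasserstein metric $W_\rho$ on $T(A)=\prob(X)$. Since $W_\rho$ metrizes the $w^*$-topology on $\prob(X)$ and is convex and midpoint-balanced, so is its restriction to $\prob(X)^h$; hence by Remark~\ref{rem:lsc} the seminorm $L_{W_\rho}$ is a lower semicontinuous Lip-norm on the closed order unit space $B^q\cong\aff(\prob(X)^h)$ with $\ker=\rr q(1)$ and with $\rho_L=W_\rho|_{\prob(X)^h}$ inducing the $w^*$-topology on $T(B)$. (Alternatively, one checks $\rho_L\le W_\rho|_{\prob(X)^h}$ directly from the fact that $L(b)\le1$ forces $\widehat b|_{T(B)}$ to be $1$-Lipschitz, and $\rho_L\ge W_\rho|_{\prob(X)^h}$ by lifting a near-optimal $1$-Lipschitz function on $(X,\rho)$ to $\widehat a$ with $a\in A_{sa}$, $L_A(a)\le1$, via the $\cs$-QMBS structure of $A$ and \eqref{eqn:dirichlet}.) This settles the fourth condition, and the third follows by pulling the identity $\ker\bigl(\text{Lip-norm on }B^q\bigr)=\rr q(1)$ back along $q\colon B_{sa}\to B^q$ (whose kernel is $B_0$) and complexifying: if $b\in B_{sa}$ with $L(b)=0$ then $\widehat b$ is a constant $c$, so $\tau(b-c1)=0$ for all $\tau\in T(B)$ and tracial orderedness of $B^q$ forces $b-c1\in B_0$; the reverse inclusion is clear.

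For the outstanding dense finiteness, the key device I would introduce is the canonical faithful conditional expectation $E\colon B\to\underset{\to}{A}$ together with the observation that every trace $\hat\tau$ on $B$ factors through it: $\hat\tau|_{\underset{\to}{A}}$ is $\underset{\to}{\alpha}$-invariant (as $\underset{\to}{\alpha}$ is implemented by a unitary of $B$), so $\hat\tau|_{\underset{\to}{A}}\circ E$ is a trace on $B$ extending it, whence $\hat\tau=\hat\tau|_{\underset{\to}{A}}\circ E$ by the uniqueness half of the tracial extension hypothesis. Two consequences: first, $\ker E\subseteq\ker L$, since $c\in\ker E$ gives $\widehat c(\hat\tau)=\hat\tau(E(c))=0$; second, for $a$ in the $n$th copy $\iota_n(A)\subseteq\underset{\to}{A}$ one has $\hat\tau(\iota_n(a))=\hat\tau(\iota_0(a))$ by $\underset{\to}{\alpha}$-invariance, so under $T(B)\cong T(A)^{\alpha}$ the function $\widehat{\iota_n(a)}|_{T(B)}$ is the restriction of $\widehat a\in\aff(T(A))$, giving $L(\iota_n(a))\le L_A(a)$.

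With these in hand, density follows. Given $b\in B_{sa}$ and $\varepsilon>0$, write $b=E(b)+(b-E(b))$ with $E(b)\in\underset{\to}{A}_{sa}$ and $b-E(b)\in(\ker E)_{sa}$. Since $\bigcup_n\iota_n(A)$ is dense in $\underset{\to}{A}$ and $\{a\in A_{sa}\mid L_A(a)<\infty\}$ is dense in $A_{sa}$ (because $A$ is a $\cs$-QMBS, hence densely finite), choose $a'\in A_{sa}$ with $L_A(a')<\infty$ and $\|E(b)-\iota_n(a')\|<\varepsilon$ for some $n$. Then $b':=\iota_n(a')+(b-E(b))\in B_{sa}$ satisfies $\|b-b'\|<\varepsilon$ and $L(b')\le L(\iota_n(a'))+L(b-E(b))\le L_A(a')<\infty$. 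Hence $\{b\in B_{sa}\mid L(b)<\infty\}$ is dense in $B_{sa}$, completing the verification of Definition~\ref{def:cqmcs}. The main obstacle is precisely this last point — controlling finiteness of $L$ on the whole crossed product rather than just on $A$ — and the conditional-expectation factorisation, which rests squarely on the unique tracial extension property, is what makes it go through; everything else is bookkeeping with the identifications $T(B)\cong T(A)^{\alpha}\cong\prob(X)^h$. (The parenthetical hypotheses indeed hold when $A$ is simple and $\alpha_h$ has finite Rokhlin dimension: the latter yields the unique tracial extension property and strong outerness, so $B$ is simple by Kishimoto's theorem \cite{Kishimoto:1981tg} and, being unital, algebraically simple.)
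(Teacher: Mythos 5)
Your proposal is correct and takes essentially the same route as the paper, whose proof is a terse version of yours: tracial orderedness comes from Proposition~\ref{prop:to} via algebraic simplicity, and dense finiteness of $L$ comes from the fact that the unique trace extension is $\sum_n a_nu^n\mapsto\tau(a_0)$ --- i.e.\ exactly your factorisation through the conditional expectation $E$. Your write-up simply fills in the details that the paper dismisses as holding ``by construction.''
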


\begin{proof}
We must verify the conditions of Definition~\ref{def:cqmcs}. Note that $(A\rtimes_{\alpha_h}\nn)^q$ is tracially ordered by Proposition~\ref{prop:to}. The seminorm $L$ is by definition lower semicontinuous and self adjoint,  and its kernel consists of the tracially constant elements of $A\rtimes_{\alpha_h}\nn$. Since the unique extension of $\tau\in T(\underset{\to}{A}) \cong T(A)$ to $T(\underset{\to}{A}\rtimes_{\underset{\to}{\alpha_h}}\zz) = T(A\rtimes_{\alpha_h}\nn)$ is defined by $\sum_{n\in\zz}a_nu^n\mapsto\tau(a_0)$, $L$ is also densely finite.  Since $W_{\rho}$ is convex \eqref{eqn:convex} and midpoint balanced \eqref{eqn:midpt}, so is its restriction to ${\prob(X)^h}$. As described in Remark~\ref{rem:lsc}, this guarantees that $\rho_L=W_{\rho}|_{\prob(X)^h}$ and in particular that $\rho_L$ induces the $w^*$-topology on $T(A\rtimes_{\alpha_h}\nn)$. We conclude that $(A\rtimes_{\alpha_h}\nn,L)$ is a $\cs$-algebraic quantum metric Choquet simplex.
\end{proof}

\begin{remark} \label{rem:unique}
In Section~\ref{subsection:dynamics}, we will investigate the variation of the structure described in Proposition~\ref{prop:inherit} alongside continuously varying actions $h_\theta$. One of the desired properties of a continuous field of quantum structures is suitable upper semicontinuity of Lip-norms (see Definition~\ref{def:cf}). While the issue does not arise in the examples that motivate Theorem~\ref{thm:cf}, we would like to allow for the possibility of actions that are uniquely ergodic at some fibres but not others. If $(X,h)$ is uniquely ergodic, then $A_X\rtimes_{\alpha_h}\nn$ has a unique trace, and so the formula \eqref{eqn:L} is not immediately sensible. In fact, the unique $\cs$-QMCS structure in this setting is the trivial one $L=0$. This could be problematic for upper semicontinuity. So at the cost of reducing $\ker L$ to a smaller subspace than required by Definition~\ref{def:cqmcs}, we will replace the trivial structure by the seminorm $L\colon A_X\rtimes_{\alpha_h}\nn\to[0,\infty]$ defined by $L\left(\sum_{n\in\zz}a_nu^n\right):=L_A(a_0)$. (Here, we use the same notation $L_A$ to denote the seminorm on $A=A_X$ and the corresponding one on $\underset{\to}{A}$ given by the identification $T(\underset{\to}{A}) = T(A)$.) Note in particular that if $L$ is as constructed in Proposition~\ref{prop:inherit}, then
\[
L\left(\sum_{n\in\zz}a_nu^n\right) = L_{W_{\rho}|_{\prob(X)^h}}\left(\widehat{a_0}|_{\prob(X)^h}\right)
\le L_\rho\left(\widehat{a_0}|_{X}\right) \\
= L_A\left(\sum_{n\in\zz}a_nu^n\right)
\]
which will give us the required upper semicontinuity at uniquely ergodic fibres.
\end{remark}

Recall from Remark~\ref{rem:cqmcs} that certain stably projectionless classifiable $\cs$-algebras $A$ can also be equipped with QMCS structure. If $A=A_X$ is not necessarily unital but $\alpha_h$ is invertible (so that, in particular, $A_X\rtimes_{\alpha_h}\nn \cong A_X\rtimes_{\alpha_h}\zz$), then the proof of  Proposition~\ref{prop:inherit} still demonstrates that $(A_X\rtimes_{\alpha_h}\zz, L_{W_{\rho}|_{\prob(X)^h}})$ is a $\cs$-QMCS. This provides us with dynamical $\cs$-QMCS descriptions of arbitrary metrisable Choquet simplices. 

\begin{corollary} \label{cor:dynqmcs}
Every metrisable Choquet simplex $\Delta$ arises as the trace space of a $\mathcal{W}$-stable, classifiable dynamical $\cs$-QMCS, specifically a tracial quantum crossed product associated with a minimal subshift. 
\end{corollary}

\begin{proof}
By Downarowicz's theorem \cite[Theorem 3]{Downarowicz:1991te}, there exists a minimal subshift $(X,h)$ such that $\prob(X)^h \cong \Delta$. Let $\rho$ be a compatible metric on the shift space $X$. Let $(\mathcal{A}_X,L_\rho)$ be a unital classifiable $\cs$-algebraic quantum metric Bauer simplex associated with $(X,\rho)$ as in Example~\ref{ex:cqmcs}.\ref{it:ex2}. Because $X$ is in general not connected, our construction does not guarantee that $\mathcal{A}_X$ is $K$-connected, which could be an obstruction to lifting the topological dynamical system $(X,h)$ to the $\cs$-algebraic level. We overcome this by tensoring with the monotracial, $KK$-contractible, stably projectionless $\cs$-algebra $\mathcal{W}$. This has the effect of killing $K$-theory (hence the pairing) while preserving tracial structure and classifiability. The $\cs$-algebra $\mathcal{A}_X\otimes\mathcal{W}$ is algebraically simple, $KK$-contractible, stably projectionless and classifiable, and there is an affine homeomorphism $\theta\colon T(\mathcal{A}_X) \to T(\mathcal{A}_X\otimes\mathcal{W})$ that maps $\tau$ to $\tau\otimes\tau_{\mathcal{W}}$. As discussed in Remark~\ref{rem:cqmcs}, the Cuntz--Pedersen quotient $(\mathcal{A}_X\otimes\mathcal{W})^q \cong \aff(T(\mathcal{A}_X))$ has the structure of a tracially ordered order unit space. Define the seminorm $L\colon \mathcal{A}_X\otimes\mathcal{W} \to [0,\infty]$ by $L(a)=L_\rho(\theta^*(a))$, where $\theta^*(a)(\tau)=(\theta(\tau))(a)$ for $a\in \mathcal{A}_X\otimes\mathcal{W}$ and $\tau\in T(\mathcal{A}_X)$. (Equivalently, we define a metric $\rho'$ on $T(\mathcal{A}_X\otimes\mathcal{W})$ making $\theta$ an isometry, and $L$ is the seminorm $L_{\rho'}$ defined by \eqref{eqn:lbauer}.) We can then view $(\mathcal{A}_X\otimes\mathcal{W},L)$ as a nonunital $\cs$-QMBS. Let $Y:=\partial_e(T(\mathcal{A}_X\otimes\mathcal{W})) \cong \partial_e(T(\mathcal{A}_X)) \cong X$ and let $g\colon Y \to Y$ be the homeomorphism $g:=\theta\circ h \circ \theta^{-1}$. The topological dynamical system $(Y,g)$ is isometrically conjugate to $(X,h)$, so in particular, $\prob(Y)^g \cong \prob(X)^h \cong \Delta$. Appealing to \cite[Lemma 4.7]{Jacelon:2022wr} and stably projectionless classification theory \cite[Theorem 7.5]{Elliott:2020wc} as in the proof of Theorem~\ref{thm:range} or \cite[Theorem 5.1]{Jacelon:2022wr}, we can lift $g$ to an automorphism $\alpha_g$ of $\mathcal{A}_X\otimes\mathcal{W}$ that moreover has finite Rokhlin dimension. In other words, $(\mathcal{A}_X\otimes\mathcal{W},\alpha_g)$ is a $\mathcal{W}$-stable Stein system associated with $(Y,g)$. Proposition~\ref{prop:inherit} then endows $(\mathcal{A}_X\otimes\mathcal{W})\rtimes_{\alpha_g}\zz$ with the structure of a classifiable $\cs$-QMCS whose trace space is affinely homeomorphic to $\Delta$.
\end{proof}

\subsection{Random systems} \label{subsection:random}

Here, we seize upon the present opportunity to clarify and expand on \cite[Proposition 4.3 and Example 4.10]{Jacelon:2022wr}. These observations pertain to finite-time estimates of large deviation from the spatial mean for Lipschitz observables of (quantum) dynamical systems. Specifically (in the current article's notation) it is asserted that, if $\alpha$ is an automorphism (or tracially nondegenerate endomorphism) of a $\cs$-QMBS $A$ preserving a unique trace $\tau_\alpha$ whose representing measure is $\nu=\nu_\alpha$, then for given $r \ge r_A$, nucleus $\mathcal{D}_r(A)$ and $\varepsilon>0$, there are constants $c_1,c_2>0$ such that, for every $n\in\nn$,
\begin{equation} \label{eqn:deviation}
\sup_{a\in\mathcal{D}_r(A)} \nu_\alpha \left(\left\{\tau\in \partial_e(T(A)) \mid \left| \frac{1}{n}\sum_{k=0}^{n-1} \tau(\alpha^k(a))-\tau_\alpha(a)\right| > \varepsilon \right\}\right) \le c_1e^{-c_2n\varepsilon^2}.
\end{equation}
But for \emph{deterministic} systems, this is really the same information that is provided by Birkhoff's ergodic theorem and compactness of $\mathcal{D}_r(A)$ (see Section~\ref{subsubsection:birkhoff}). Indeed, for fixed $\varepsilon$, there exists $N\in\nn$ such that for every $n\ge N$ and every $a\in\mathcal{D}_r(A)$, the set appearing on the left-hand side of \eqref{eqn:deviation} is \emph{empty}.

The situation becomes much more interesting in the setting of \emph{random} dynamical systems. More precisely, suppose that $(X,P)$ is a \emph{Markov--Feller process} (see \cite[Chapter 2]{Benoist:2016ab} or \cite{Benaim:0aa}), that is, $X$ is a compact metrisable space and $P\colon C(X) \to C(X)$ is a unital completely positive map. Equivalently, $x\mapsto P_x:=\ev_x\circ P\in C(X)^*$ is a $w^*$-continuous map from $X$ to $\prob(X)$. The process yields a random walk $(X_k)_{k\in\nn_0}$ on $X$, where $X_0=x_0$ is chosen according to some initial distribution and the next step depends only on the current position, that is, $\pp(X_{k+1}\in U \mid X_k=x) = P_x(U)$ for $n\in\nn_0$ and $U$ a Borel subset of $X$. From this point of view, the map $P$ can be described as
\[
Pf(x) = \int_X f\,dP_x = \ee(f\circ X_1 \mid X_0=x).
\]
Such a process arises, for example, when there is a family $(g_\theta)_{\theta\in\Theta}$ of continuous maps on $X$ and the parameter space $\Theta$ is equipped with a Borel probability measure $\mu$. The associated Markov--Feller operator $P=P_\mu$ is defined by $P_\mu := \mu \ast \ev_x$, where $\ast$ denotes the convolution product
\[
\mu \ast \nu(U) := \int_\Theta \nu(g_\theta^{-1}U)\, d\mu(\theta), \quad \text{$U\subseteq X$ Borel}.
\]
Some such systems are uniquely ergodic: there is a unique $P_\mu$-invariant measure, that is, a unique $\nu\in\prob(X)$ such that $\int_X Pf\,d\nu = \int_X f\,d\nu$ for every $f\in C(X)$ (or equivalently, $\nu$ is the unique $\mu$-stationary measure, meaning that $\mu \ast \nu = \nu$). This holds, for example, for:
\begin{itemize}
\item Lipschitz actions on $X$ that are `contracting on average' (see \cite[Theorem 1.1]{Diaconis:1999aa} and also \cite[Example 11.4]{Benoist:2016ab}, which describes a process where at each step, one of two contractive maps on the Cantor set is chosen with equal probability);
\item actions on projective space $\pp(\rr^d)$ of `proximal' subsemigroups of $\mathrm{SL}(d,\rr)$, as well as actions  on certain flag varieties of certain reductive groups (see \cite[Chapters 4,13]{Benoist:2016ab}).
\end{itemize}
Breiman's law of large numbers \cite{Breiman:1960aa} says that, in these cases, for every $f\in C(X)$ and any initial position $X_0=x_0\in X$,
\begin{equation} \label{eqn:breiman}
\lim_{n\to\infty} \frac{1}{n}\sum_{k=0}^{n-1} f(x_k) = \int_X f\,d\nu 
\end{equation}
for $\pp_{x_0}$-almost every trajectory $(y_k)_{k=0}^\infty\in X^{\nn_0}$, where $\pp_{x_0}$ denotes the Markov measure associated with the starting point $x_0$ and process $P$ (that is, the probability that a trajectory lies in a Borel subset $U$ of $X^{\nn_0}$ is $\pp_{x_0}(U)$).

This phenomenon continues to hold in the noncommutative tracial setting, that is, if $A$ is a  $\cs$-QMBS and $(\alpha_\theta)_{\theta\in\Theta}$ is a random family of tracially nondegenerate endomorphisms of $A$ such that $(T(\alpha_\theta)|_{\partial_e(T(A))})_{\theta\in\Theta}$ is a uniquely ergodic Markov--Feller process on $X:=\partial_e(T(A))$. (Such noncommutative systems certainly do exist. For example, $A$ might be $K$-connected and classifiable, and $(\alpha_\theta)_{\theta\in\Theta}$ could be the lift of a uniquely ergodic system on $X$). We are interested in the long-term tracial behaviour of the random system $(\alpha_\theta)_{\theta\in\Theta}$. Given an initial observable $[a_0]\in A^q$, we let $\pp_{[a_0]}$ denote the corresponding Markov measure on $\ell_\infty A^q$ equipped with the product Borel $\sigma$-algebra. What follows is is a finite-time estimate of large deviation associated with \eqref{eqn:breiman}. Although stated under the assumption that the transformations $T(\alpha_\theta)$ are $1$-Lipschitz, the proposition should also hold for endomorphisms $\alpha_\theta$ that are tracially `contracting on average' in a suitable sense (see \cite[Chapter 11]{Benoist:2016ab}).

\begin{proposition} \label{prop:breiman}
Let $(A,L)$ be a $\cs$-QMBS, let $(\Theta,\mu)$ be a probability space and let $(\alpha_\theta)_{\theta\in\Theta}$ be a family of tracially nondegenerate endomorphisms of $A$. Suppose that the continuous affine maps $T(\alpha_\theta)_{\theta\in\Theta}$ are $1$-Lipschitz on $\partial_e(T(A))$ (hence on all of $T(A)$) and that the associated Markov--Feller system $(\partial_e(T(A)),P_\mu)$ admits a unique $P_\mu$-invariant measure $\nu\in\prob(\partial_e(T(A)))$. Let $\tau_\nu\in T(A)$ be the trace whose representing measure is $\nu$ and fix a nucleus $\mathcal{D}_r(A)$. Then, for every $\varepsilon>0$, there are constants $c_1,c_2>0$ such that, for every $\tau\in T(A)$, $n\in\nn$ and $a_0\in \mathcal{D}_r(A)$,
\begin{equation} \label{eqn:breidev}
\pp_{[a_0]} \left(\left\{([a_k])_{k=0}^\infty\in \ell_\infty A^q \mid \left| \frac{1}{n}\sum_{k=0}^{n-1} \tau(a_k)-\tau_\nu(a_0)\right| > \varepsilon \right\}\right) \le c_1e^{-c_2n\varepsilon^2}.
\end{equation}
\end{proposition}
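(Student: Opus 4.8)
The plan is to transfer the estimate to the commutative boundary $X=\partial_e(T(A))$ using the $\cs$-QMBS structure, then split the error into a deterministic bias (controlled by unique ergodicity) and a random fluctuation (controlled by a martingale concentration inequality). For the reduction, fix $a_0\in\mathcal D_r(A)$: by Theorem~\ref{thm:nucleus} and \eqref{eqn:dirichlet}, $f:=\widehat{a_0}|_X$ lies in the norm-compact set $\mathcal D_r(C(X))=\lip(X,\rho_L)\cap B_r(C(X))$ of \eqref{eqn:drcomm}, and $\widehat{a_0}$ is a $1$-Lipschitz element of $\aff(T(A))$ with respect to $\rho_L$. Writing $g_\theta=T(\alpha_\theta)|_X$ (so $\widehat{\alpha_\theta(b)}|_X=\widehat b|_X\circ g_\theta$ for $b\in A$), the Markov chain on $A^q$ started at $[a_0]$ is $[a_k]=[\alpha_{\theta_k}\circ\cdots\circ\alpha_{\theta_1}(a_0)]$ for an i.i.d.\ sequence $(\theta_j)$ of law $\mu$, whence $\tau(a_k)=\int_X f\bigl(g_{\theta_1}\circ\cdots\circ g_{\theta_k}(x)\bigr)\,d\mu_\tau(x)$ for every $\tau\in T(A)$. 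So \eqref{eqn:breidev} is precisely the quantitative form of Breiman's law \eqref{eqn:breiman} for the Markov--Feller process $(X,P_\mu)$ and the observable $f$, required uniformly in the starting trace $\tau$ and over $f\in\mathcal D_r(C(X))$.

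First I would dispose of the bias. For each fixed $k$ the reversed composition $g_{\theta_1}\circ\cdots\circ g_{\theta_k}(x)$ has the same law as the $k$-th step of the forward chain, so $\ee[\tau(a_k)]=\int_X(P_\mu^k f)\,d\mu_\tau$ and $\ee\bigl[\tfrac1n\sum_{k<n}\tau(a_k)\bigr]=\int_X\tfrac1n\sum_{k<n}(P_\mu^k f)\,d\mu_\tau$. Unique ergodicity of $(X,P_\mu)$ on the compact space $X$ forces $\tfrac1n\sum_{k<n}P_\mu^k f\to\int_X f\,d\nu=\tau_\nu(a_0)$ uniformly on $X$, and since $\mathcal D_r(C(X))$ is uniformly bounded and equicontinuous this convergence is, by Arzel\`{a}--Ascoli, uniform over $f\in\mathcal D_r(C(X))$. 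Hence there is $N_0=N_0(\varepsilon)$, independent of $\tau$ and $a_0$, with $\bigl|\ee[\tfrac1n\sum_{k<n}\tau(a_k)]-\tau_\nu(a_0)\bigr|<\varepsilon/2$ for every $n\ge N_0$; for $n<N_0$ the left-hand side of \eqref{eqn:breidev} is at most $1$ and will be absorbed at the end by enlarging $c_1$.

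The remaining, and hardest, step is to show that $S_n:=\sum_{k<n}\tau(a_k)$, viewed as a function of the driving sequence $(\theta_1,\dots,\theta_{n-1})$, concentrates about $\ee S_n$ with a rate $c_1 e^{-c_2 n\varepsilon^2}$ that is uniform in $\tau$, $a_0$ and $n$. I would run the Doob martingale $M_i=\ee[S_n\mid\theta_1,\dots,\theta_i]$ and, after a Gordin-type telescoping, express its increments through the partial correctors $G_i:=\sum_{j=0}^{n-1-i}P_\mu^j f$: up to the reordering of the driving variables forced by the reversed composition, $M_i-M_{i-1}$ is a conditional average of $G_i\circ g_{\theta_i}-P_\mu G_i$ evaluated along the trajectory, hence a bounded martingale difference with $|M_i-M_{i-1}|\le\diam(X)\operatorname{Lip}(G_i)$, since every $g_\theta$ and $f$ are $1$-Lipschitz. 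Azuma--Hoeffding then gives
\[
\pp\!\left(\bigl|S_n-\ee S_n\bigr|>\tfrac{n\varepsilon}{2}\right)\ \le\ 2\exp\!\left(-\,\frac{n^2\varepsilon^2}{8\,\diam(X)^2\sum_{i=1}^{n-1}\operatorname{Lip}(G_i)^2}\right).
\]
\textbf{The main obstacle} is to bound $\sum_{i=1}^{n-1}\operatorname{Lip}(G_i)^2$ by $O(n)$ with an $n$-independent constant, i.e.\ to show that the Lipschitz constants $\operatorname{Lip}(P_\mu^j f)$ are summable --- equivalently, decay geometrically --- uniformly over $f\in\mathcal D_r(C(X))$. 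The bare $1$-Lipschitz hypothesis only gives $\operatorname{Lip}(P_\mu^j f)\le 1$, so this decay has to be extracted from unique ergodicity through the asymptotic-stability and equicontinuity theory of non-expansive Markov--Feller operators on compact metric spaces (cf.\ \cite[Chapters 2, 11]{Benoist:2016ab} and \cite{Benaim:0aa}); in the ``tracially contracting on average'' regime noted after the statement it is immediate, since then $\operatorname{Lip}(P_\mu^j f)\le\operatorname{Lip}(f)\,\ee\bigl[\operatorname{Lip}(g_{\theta_1}\circ\cdots\circ g_{\theta_j})\bigr]\le C\lambda^j$ for some $\lambda<1$, whence $\sum_{i}\operatorname{Lip}(G_i)^2\le\bigl(C/(1-\lambda)\bigr)^2 n$.

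Granting the summability, I would finish by combining the two estimates: for $n\ge N_0$ the probability in \eqref{eqn:breidev} is at most $\pp\!\left(\bigl|\tfrac1n S_n-\tfrac1n\ee S_n\bigr|>\varepsilon/2\right)\le 2e^{-c_2 n\varepsilon^2}$ with $c_2$ of order $\bigl(\diam(X)\,C/(1-\lambda)\bigr)^{-2}$, while for $n<N_0$ it is at most $1\le e^{c_2 N_0\varepsilon^2}e^{-c_2 n\varepsilon^2}$; taking $c_1=\max\{2,\,e^{c_2 N_0\varepsilon^2}\}$ then yields \eqref{eqn:breidev} with $c_1,c_2$ depending only on $\varepsilon$, $r$, $\diam(X)$ and the mixing data of $(X,P_\mu)$ --- in particular not on $\tau$, $n$ or $a_0$.
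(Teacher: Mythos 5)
There is a genuine gap, and it is exactly the one you flag yourself: your Azuma--Hoeffding bound needs $\sum_i\mathrm{Lip}(G_i)^2=O(n)$, i.e.\ summability (in practice geometric decay) of $\mathrm{Lip}(P_\mu^jf)$ uniformly over $f\in\mathcal{D}_r(C(X))$, and this simply does not follow from the stated hypotheses. Unique ergodicity of a non-expansive Markov--Feller system gives no quantitative decay of Lipschitz constants of $P_\mu^jf$: take $X=S^1$ and let $P_\mu$ average the two rotations $R_{\pm\alpha}$ with equal weights for irrational $\alpha$; this is uniquely ergodic (Lebesgue is the only stationary measure, by Fourier coefficients) and every map is an isometry, yet $\widehat{P_\mu^jf}(k)=\widehat f(k)\cos(2\pi k\alpha)^j$, so for Liouville $\alpha$ and suitable $f$ the decay of $\mathrm{Lip}(P_\mu^jf)$ is arbitrarily slow, and in the degenerate deterministic case of a single irrational rotation there is no decay at all. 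So with only $\mathrm{Lip}(G_i)\le n-i$ your exponent degenerates to $\exp(-c\varepsilon^2/n)$ and the argument proves nothing outside the ``contracting on average'' regime, which is a strictly stronger hypothesis than the proposition's.

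The fix does not require the Gordin corrector at all, and you already hold the key ingredient in your ``bias'' step: by unique ergodicity and Arzel\`{a}--Ascoli, $\psi_m:=\frac1m\sum_{j<m}P_\mu^jf$ satisfies $\|\psi_m-\nu(f)\|_\infty<\varepsilon/2$ for some $m=m(\varepsilon)$ \emph{fixed independently of $n$} and uniform over $f\in\mathcal{D}_r(C(X))$. One then runs the concentration argument with $\psi_m$ in place of $G_i$ (a blocking argument: $m$ interlaced Doob martingales whose increments are bounded by $O(m\|f\|_\infty)\le O(mr)$, needing only the uniform bound $r$ and no Lipschitz decay), which yields $c_1e^{-c_2n\varepsilon^2}$ with $c_2\sim(m(\varepsilon)r)^{-2}$. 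This is precisely the mechanism behind \cite[Proposition 3.1]{Benoist:2016ur}, which is what the paper's proof cites for fixed $a_0$ and extremal $\tau$, upgrading to uniformity over $\mathcal{D}_r(A)$ by compactness and to general $\tau\in T(A)$ by an $\frac{\varepsilon}{4}$-net of $\partial_e(T(A))$ together with a common large set of good trajectories. Your direct treatment of general $\tau$ via its representing measure is fine in principle (the Doob martingale of $S_n=\sum_{k<n}\tau(a_k)$ does not care whether $\tau$ is extreme), but the concentration step must be repaired as above before the proposal establishes the proposition under its actual hypotheses.
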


\begin{proof}
By \cite[Proposition 3.1]{Benoist:2016ur}, the inequality \eqref{eqn:breidev} with $\frac{\varepsilon}{2}$ in place of $\varepsilon$ (to make room for later estimates) holds for fixed $a_0$ and every $\tau\in\partial_e(T(A))$ . The bound displayed in \cite[Proposition 3.1]{Benoist:2016ur} is of the form $c_1'e^{-Mn}$, but one sees from the proof that $M=c_2\varepsilon^2$ for a suitable constant $c_2$. In fact, one also sees that, by compactness of $\mathcal{D}_r(A)$ (or rather, its image in $C(\partial_e(T(A)))$), the same constants can be used uniformly across $\mathcal{D}_r(A)$. Finally, approximating $\tau\in T(A)$ by a convex combination of elements of $\partial_e(T(A))$, one sees that \eqref{eqn:breidev} holds for every $\tau\in T(A)$. Note that, to succeed with this approximation, we should ensure that we can use a common large set of `good' trajectories $(\alpha_k)_{k=1}^\infty \in \{\alpha_\theta \mid \theta\in\Theta\}^\nn$ that works for every $\tau\in\partial_e(T(A))$, that is, trajectories for which
\begin{equation} \label{eqn:commongood}
\sup_{a_0\in\mathcal{D}_r(A)} \sup_{\tau \in T(A)} \sup_{n\in\nn} \left|\frac{1}{n}\sum_{k=0}^{n-1} \tau(a_k)-\tau_\nu(a_0)\right| \le \frac{\varepsilon}{2},
\end{equation}
where $a_k=\alpha_{k} \circ \dots \circ \alpha_1(a_0)$. But we can indeed accomplish this by compactness of $\partial_e(T(A))$ and $\mathcal{D}_r(A)$. More precisely, let $\tau_1,\dots,\tau_m \in \partial_e(T(A))$ be an $\frac{\varepsilon}{4}$-net for  $(\partial_e(T(A)),\rho_L)$, so that for every $\tau\in \partial_e(T(A))$, there is some $i\in[1,m]$ such that
\[
|\tau(a)-\tau_i(a)| \le L(a)\rho_L(\tau,\tau_i) < \frac{\varepsilon}{4} \: \text{ for every } a \in \mathcal{D}_r(A).
\]
Then, every $\tau\in T(A)$ can be approximated on $\mathcal{D}_r(A)$ up to $\frac{\varepsilon}{2}$ by a finite convex combination $\sum_{i=1}^m\lambda_i\tau_i=:\tau'$. The Markov probability of the intersection of the sets of trajectories for which \eqref{eqn:commongood} holds for $\tau=\tau_i$, $i\in[1,m]$, is at least $1-c_1e^{-c_2n\varepsilon^2}$ with $c_1:=mc_1'$. On this intersection, for every $a_0\in\mathcal{D}_r(A)$ and $n\in\nn$, we have
\begin{align*}
\left|\frac{1}{n}\sum_{k=0}^{n-1} \tau(a_k)-\tau_\nu(a_0)\right| &\le \frac{1}{n}\sum_{k=0}^{n-1} |\tau(a_k)-\tau'(a_k)| + \left|\frac{1}{n}\sum_{k=0}^{n-1} \tau'(a_k)-\tau_\nu(a_0)\right|\\
&\le \frac{\varepsilon}{2} + \left|\frac{1}{n}\sum_{k=0}^{n-1} \sum_{i=1}^m\lambda_i\tau_i(a_k) - \sum_{i=1}^m\lambda_i\tau_\nu(a_0)\right|\\
&\le \frac{\varepsilon}{2} + \sum_{i=1}^m\lambda_i\left|\frac{1}{n}\sum_{k=0}^{n-1} \tau_i(a_k)-\tau_\nu(a_0)\right|\\
&\le \varepsilon. \qedhere
\end{align*}
\end{proof}

\section{Quantum fields} \label{section:cf}

In this section, we address the question: how does the geometry of a $\cs$-QMCS $(A,L)$ change in conjunction with deformation of its quantum metric structure? We will specifically examine continuous variation of:
\begin{enumerate}[(I)]
\item \label{item:metric} the seminorm $L$ (or metric $\rho_L$);
\item \label{item:dynamics} the underlying dynamics $(X,h)$ when $A$ is constructed as a tracial quantum crossed product $A_X\rtimes_{\alpha_h}\nn$.
\end{enumerate}

Our motivating example for \eqref{item:metric} is that of a vibrating piano string. We think of the interval $I=[0,\pi]$ as $\partial_e(T(A))$ for a $\cs$-QMBS $A$, and we encode the motion of the string by adjusting the metric on $I$  as
\begin{equation} \label{eqn:wave}
\rho_t(a,b) = \int_a^b \sqrt{1+\left(\frac{\partial}{\partial x}u(x,t)\right)^2}\,dx,
\end{equation}
where $u(x,t)$ is the solution over some time interval $[0,p]$ to the appropriate wave equation. That is, $\rho_t(a,b)$ is the arc length from $a$ to $b$ along the stretched string at time $t$. The example to bear in mind for \eqref{item:dynamics} is an isometric action on a Riemannian manifold that is conjugated by an isotopy, in particular, rotation $h_\theta$ of the circle $S^1$ deformed to $g_t \circ h_\theta \circ g_t^{-1}$ via a path of diffeomorphisms (or uniformly bi-Lipschitz homeomorphisms) $g_t \colon S^1 \to S^1$. In both cases, we will assemble the families of structures into continuous fields of quantum spaces. In the former case, we investigate the variation of unitary distances $d_U$ (see Section~\ref{subsubsection:unitary}) and Birkhoff convergence rates $\beta_r$ (see Section~\ref{subsubsection:birkhoff}). In the latter, we are interested in the quantum Gromov--Hausdorff distance between fibres (see Section~\ref{section:qgh}).

With these examples in mind, we adopt from \cite[Definition 6.4]{Li:2006aa} the following notion of a continuous field of quantum spaces. For background on continuous fields of Banach spaces and $\cs$-algebras, see \cite[Chapter 10]{Dixmier:1964rt}.

\begin{definition} \label{def:cf}
Let $\Theta$ be a compact Hausdorff topological space. We say that $((A_\theta,L_\theta)_{\theta\in\Theta},\Gamma)$ is a \emph{continuous field of $\cs$-algebraic quantum metric Choquet simplices over $\Theta$} if:
\begin{enumerate}[1.]
\item \label{it:cf1} each $(A_\theta,L_\theta)$ is a $\cs$-algebraic quantum metric Choquet simplex;
\item \label{it:cf2} $((A_\theta)_{\theta\in\Theta},\Gamma)$ is a continuous field of $\cs$-algebras that is unital in the sense that $(\theta\mapsto 1_{A_\theta}) \in \Gamma$;
\item \label{it:cf3} for every $\theta\in\Theta$, $a\in L_\theta^{-1}([0,\infty))\subseteq A$ and $\varepsilon>0$, there is a Lipschitz section $f$ at $\theta$ (that is, $f\in\Gamma$ such that the map $\eta\mapsto L_\eta(f_\eta)$ is upper semicontinuous at $\theta$) with $\|f_\theta-a\|<\varepsilon$ and $L_{\theta}(f_{\theta}) < L_{\theta}(a) + \varepsilon$.
\end{enumerate}  
\end{definition}

\subsection{Metric deformation}

For the following (a continuous-field version of Theorem~\ref{thm:nucleus}), recall the definition of the unitary distance \eqref{eqn:unitary} and Birkhoff convergence rate (Definition~\ref{def:birkhoff}) relative to a nucleus $\mathcal{D}_r(A)$ of a $\cs$-QMCS $(A,L)$. 

\begin{theorem} \label{thm:metricdeform}
Let $\Theta$ be a compact Hausdorff space and $(\rho_\theta)_{\theta\in\Theta}$ a family of metrics on a compact metrisable space $X\ne\{\mathrm{pt}\}$ that varies Lipschitz continuously in the following sense: there are continuous functions $k,K\colon\Theta\times\Theta\to(0,\infty)$ that are both constantly $1$ on the diagonal and otherwise satisfy 
\begin{equation} \label{eqn:lip}
k_{\eta,\theta} \rho_{\eta}(x,y) \le \rho_{\theta}(x,y) \le K_{\eta,\theta}\rho_{\eta}(x,y) \:\text{ for every }\: \eta,\theta\in\Theta,\,x,y\in X.
\end{equation}
Let $A$ be a separable, unital $\cs$-algebra with $\partial_e(T(A))\cong X$, and for each $\theta\in\Theta$, let $L_\theta$ be the seminorm corresponding to $\rho_\theta$ via \eqref{eqn:lbauer} and making $(A,L_\theta)$ into a $\cs$-algebraic quantum metric Bauer simplex. Then, $((A,L_\theta)_{\theta\in\Theta},C(\Theta,A))$ is a continuous field of $\cs$-algebraic quantum metric Choquet simplices. Moreover, for every $r\ge\sup_{\theta\in\Theta}r_{(X,\rho_\theta)}$, there is a family $(\mathcal{D}_{r,\theta}(A))_{\theta\in\Theta}$ of nuclei of $(A,L_\theta)$ such that:
\begin{enumerate}[(i)]
\item \label{it:wave2} for any tracially nondegenerate $\alpha\in\End(A)$ with a unique invariant trace, and any $\varepsilon>0$, the map sending $\theta\in\Theta$ to $\beta_{r,\theta}(\varepsilon)$ is upper semicontinuous (where $\beta_{r,\theta}$ denotes the Birkhoff convergence rate relative to $\mathcal{D}_{r,\theta}(A)$);
\item \label{it:wave1} for any unital $\cs$-algebra $B$ and $\varphi,\psi\in\emb(A,B)$, the map that sends $\theta\in\Theta$ to $d_U(\varphi,\psi)|_{\mathcal{D}_{r,\theta}(A)}$ is continuous.
\end{enumerate}
\end{theorem}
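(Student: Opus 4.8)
The plan is to reduce everything to Hausdorff control of a single, carefully chosen family of nuclei, the control being extracted from the two-sided estimate \eqref{eqn:lip}. Since \eqref{eqn:dirichlet} realises $L_\eta(x)$, for each $x\in A$ and $\eta\in\Theta$, as the $\rho_\eta$-Lipschitz seminorm of $\widehat{x}|_X$, the estimate \eqref{eqn:lip} at once yields a continuous function $c\colon\Theta\times\Theta\to[1,\infty)$ with $c_{\theta,\theta}=1$ such that
\[
c_{\eta,\theta}^{-1}\,L_\theta(x)\ \le\ L_\eta(x)\ \le\ c_{\eta,\theta}\,L_\theta(x)\qquad\text{for all }\eta,\theta\in\Theta,\ x\in A.
\]
With this the continuous field structure is immediate: take $\Gamma=C(\Theta,A)$, so that condition~\ref{it:cf1} of Definition~\ref{def:cf} holds by hypothesis, condition~\ref{it:cf2} because $C(\Theta,A)$ is the trivial unital continuous field of $\cs$-algebras, and for condition~\ref{it:cf3}, given $\theta$ and $a$ with $L_\theta(a)<\infty$, use the constant section $f\equiv a$; the only point to verify is that $\eta\mapsto L_\eta(a)$ is upper semicontinuous at $\theta$, which is immediate from $L_\eta(a)\le c_{\eta,\theta}L_\theta(a)$ and $c_{\eta,\theta}\to1$.

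Next, fix once and for all a Bartle--Graves continuous section $f\colon A^q\to A_{sa}$ of the quotient map $q$ (as in the proof of Theorem~\ref{thm:nucleus}), and for $r\ge\sup_\theta r_{(X,\rho_\theta)}$ put $\mathcal{D}^q_{r,\theta}(A)=\{x\in A^q\mid L_\theta(x)\le1,\ \|x\|_q\le r\}$ and $\mathcal{D}_{r,\theta}(A)=f(\mathcal{D}^q_{r,\theta}(A))$. By the proof of Theorem~\ref{thm:nucleus}, and since $r\ge r_{(X,\rho_\theta)}=r_{(A,L_\theta)}$ for every $\theta$, each $\mathcal{D}_{r,\theta}(A)$ is a nucleus of $(A,L_\theta)$; this is the family that will serve in both \ref{it:wave2} and \ref{it:wave1}. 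The crux is the claim that $\theta\mapsto\mathcal{D}_{r,\theta}(A)$ is continuous for the Hausdorff metric on compact subsets of $A_{sa}$. This reduces to the same claim for $\theta\mapsto\mathcal{D}^q_{r,\theta}(A)$ in $A^q$: indeed, boundedness of $c$ on the compact set $\Theta\times\Theta$ confines all the $\mathcal{D}^q_{r,\theta}(A)$ to a single compact subset of $A^q$ (compact by Rieffel's total-boundedness criterion, as used in the proof of Theorem~\ref{thm:nucleus}, and completeness of $A^q$), on which $f$ is uniformly continuous. The Hausdorff continuity in $A^q$ is a rescaling estimate: if $x\in\mathcal{D}^q_{r,\theta}(A)$ then $L_{\theta_0}(x)\le c_{\theta_0,\theta}$ and $\|x\|_q\le r$, so $x/c_{\theta_0,\theta}$ lies in $\mathcal{D}^q_{r,\theta_0}(A)$ and within $r(c_{\theta_0,\theta}-1)$ of $x$; running the same argument with $\theta$ and $\theta_0$ interchanged gives $\dist_{\mathrm{H}}(\mathcal{D}^q_{r,\theta}(A),\mathcal{D}^q_{r,\theta_0}(A))\le r(\max(c_{\theta_0,\theta},c_{\theta,\theta_0})-1)\to0$ as $\theta\to\theta_0$.

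Granting the Hausdorff continuity of $(\mathcal{D}_{r,\theta}(A))_{\theta}$, part~\ref{it:wave1} is soft: for $\varphi,\psi\in\emb(A,B)$ and $u\in\mathcal{U}(B)$, the map $a\mapsto\|\varphi(a)-u\psi(a)u^*\|$ is $2$-Lipschitz (as $\varphi,\psi$ are contractive), so $D\mapsto d_U(\varphi,\psi)|_D$ is $2$-Lipschitz in the Hausdorff metric on compact subsets of $A_{sa}$, and composing with $\theta\mapsto\mathcal{D}_{r,\theta}(A)$ gives continuity of $\theta\mapsto d_U(\varphi,\psi)|_{\mathcal{D}_{r,\theta}(A)}$. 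For part~\ref{it:wave2}: since $\alpha$ is tracially nondegenerate with a unique invariant trace, $h=T(\alpha)|_X$ is uniquely ergodic, so as in \S~\ref{subsubsection:birkhoff} the uniform Birkhoff theorem and compactness of the nuclei give $\beta_{r,\theta}(\varepsilon)<\infty$ for every $\theta,\varepsilon$. Set $g_n(a)=\sup_{\tau\in T(A)}\bigl|\tfrac1n\sum_{k=0}^{n-1}\tau(\alpha^k(a))-\tau_\alpha(a)\bigr|$; this is $\theta$-independent, positively homogeneous, $2$-Lipschitz on $A_{sa}$, bounded by $2r$ on each nucleus, and factors through $q$. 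Writing $M_n(\theta)=\sup_{a\in\mathcal{D}_{r,\theta}(A)}g_n(a)$, the inclusion $\mathcal{D}^q_{r,\theta}(A)\subseteq c_{\theta_0,\theta}\,\mathcal{D}^q_{r,\theta_0}(A)$ together with homogeneity of $g_n$ gives $M_n(\theta)\le c_{\theta_0,\theta}M_n(\theta_0)$, and symmetrically; since $M_n(\theta)\le2r$ for all $n$, this upgrades to $\sup_n|M_n(\theta)-M_n(\theta_0)|\le2r(\max(c_{\theta_0,\theta},c_{\theta,\theta_0})-1)\to0$ as $\theta\to\theta_0$. Hence each $\Phi_\theta(m):=\sup_{n\ge m}M_n(\theta)$ is continuous in $\theta$, uniformly in $m$, and since $\beta_{r,\theta}(\varepsilon)=\min\{m\mid\Phi_\theta(m)\le\varepsilon\}$ with $\Phi_\theta$ non-increasing in $m$ and $\Phi_\theta(m)\to0$, the semicontinuity of $\theta\mapsto\beta_{r,\theta}(\varepsilon)$ claimed in \ref{it:wave2} is read off from this formula.

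I expect the Hausdorff-continuity claim for the nuclei to be the main obstacle — not because it is long, but because it is the one place where the hypotheses are genuinely used: the rescaling $x\mapsto x/c_{\theta_0,\theta}$ needs $c_{\theta_0,\theta}\ge1$ and it needs the rescaled element to land back inside $\mathcal{D}^q_{r,\theta_0}(A)$, which is precisely what the assumption $r\ge\sup_\theta r_{(X,\rho_\theta)}$ buys (each $\mathcal{D}^q_{r,\theta}(A)$ is then a bona fide nucleus). The same estimate, with its built-in uniformity over the index $n$, is what makes the Birkhoff part go through. All remaining ingredients — the continuous field axioms, the Lipschitz dependence of $d_U$ and of the $g_n$ on the nucleus, and the finiteness of $\beta_{r,\theta}(\varepsilon)$ — are routine.
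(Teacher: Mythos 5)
Your proof is correct and follows the same overall architecture as the paper's: derive the two-sided equivalence $k_{\eta,\theta}L_\theta\le L_\eta\le K_{\eta,\theta}L_\theta$ from \eqref{eqn:lip} to get the continuous field via constant sections, build the nuclei by applying a Bartle--Graves selection to the compact sets $\mathcal{D}^q_{r,\theta}(A)\subseteq A^q$, prove Hausdorff continuity of $\theta\mapsto\mathcal{D}_{r,\theta}(A)$, and then deduce \eqref{it:wave2} and \eqref{it:wave1} from the $2$-Lipschitz dependence of the relevant suprema on the nucleus. The one place you genuinely diverge is the Hausdorff-continuity lemma itself: the paper works in the commutative picture, showing $\theta\mapsto\mathcal{L}_{r,\theta}$ is Hausdorff-continuous in $C(X)_{sa}$ via the truncation $\min\{f_\pm/K_{\eta,\theta},r\}$ (needed because $K_{\eta,\theta}$ may be less than $1$, so naive rescaling could violate the norm bound), and then transports this to $A^q$ by affine extension; you instead work directly in $A^q$ and rescale by a constant $c_{\theta_0,\theta}\ge1$, which keeps the norm bound for free and gives the clean estimate $\dist_{\mathrm{H}}\le r(c_{\theta_0,\theta}-1)$. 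Your version is slightly more economical and also makes explicit a point the paper glosses over, namely that the sets $\mathcal{D}^q_{r,\theta}(A)$ sit inside a single compact subset of $A^q$ on which the selection is uniformly continuous. Your treatment of \eqref{it:wave2} via the uniform-in-$n$ bound $\sup_n|M_n(\theta)-M_n(\theta_0)|\le 2r(c-1)$ is equivalent to the paper's estimate $\Delta_\theta(n)\ge\Delta_{\theta_i}(n)-2\dist_{\mathrm{H}}$, and both arguments share the same harmless imprecision at the boundary case where the Birkhoff discrepancy equals $\varepsilon$ exactly.
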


\begin{proof}
The assumption \eqref{eqn:lip} ensures that, for every $\eta,\theta\in\Theta$,
\begin{equation} \label{eqn:include}
k_{\eta,\theta} \lip(X,\rho_\eta) \subseteq \lip(X,\rho_\theta) \subseteq K_{\eta,\theta}\lip(X,\rho_\eta)
\end{equation}
and therefore that the corresponding seminorms $L_\eta,L_\theta$ defined by \eqref{eqn:lbauer} satisfy
\[
k_{\eta,\theta} L_\theta \le L_\eta \le  K_{\eta,\theta} L_\theta.
\]
It follows that $L_\theta^{-1}([0,\infty))$ is independent of $\theta$, and that the map $\theta\mapsto L_\theta(a)$ is continuous for every $a\in A$. This verifies condition~\ref{it:cf3} of Definition~\ref{def:cf} (with $f$ the constant section $\eta\mapsto a$) and therefore shows that we have a continuous $\cs$-QMCS field.

We also have that the map sending $\theta\in\Theta$ to the radius $r_{(X,\rho_\theta)} = r_{(\prob(X),W_{\rho_\theta})}$ is continuous and therefore bounded. For $r \ge \sup_{\theta\in\Theta}r_{(X,\rho_\theta)}$ and $\theta\in\Theta$, let $\mathcal{L}_{r,\theta}$ be the nucleus of $(C(X),L_\theta)$ defined in \eqref{eqn:drcomm}, that is,
\[
\mathcal{L}_{r,\theta} = \{f \in C(X)_{sa} \mid \|f\|\le r,\: |f(x)-f(y)|\le\rho_\theta(x,y)\:\text{ for every } x,y\in X\}.
\]
The map sending $\theta\in\Theta$ to $\mathcal{L}_{r,\theta}$ is continuous with respect to the Hausdorff metric on subsets of $(C(X)),\|\cdot\|)$, where $\|\cdot\|$ is the uniform norm. Indeed, if $f=f_+-f_-\in\mathcal{L}_{r,\theta}$ (that is, writing $f$ as the sum of its positive and negative parts), then by \eqref{eqn:include},
\[
\min\left\{\frac{f_+}{K_{\eta,\theta}},r\right\} - \min\left\{\frac{f_-}{K_{\eta,\theta}},r\right\} \in \mathcal{L}_{r,\eta}.
\]
Noting as in the proof of Proposition~\ref{prop:dirichlet} that every $f\in C(X)$ extends uniquely to an element of $\aff(\prob(X))$ without increase of uniform norm or Lipschitz seminorm, we get a corresponding family of nuclei $\mathcal{D}^q_{r,\theta}(A)$ with compact union in $\aff(\prob(X))\cong A^q$. In other words, every element of $\mathcal{D}^q_{r,\theta}(A)$ is the unique continuous affine  extension of an element of $\mathcal{L}_{r,\theta}$ from the boundary $X$ to the whole simplex $\prob(X)$. Let $s \colon A^q \to A_{sa}$ be  the Michael--Bartle--Graves inverse to the quotient map $q \colon A_{sa} \to A^q$ as in the proof of Theorem~\ref{thm:nucleus}. The desired family of nuclei is
\[
\mathcal{D}_{r,\theta}(A) := s(\mathcal{D}^q_{r,\theta}(A)).
\]
The asserted properties of $d_U$ and $\beta_{r,\theta}(\varepsilon)$ hold because the map $\theta \mapsto \mathcal{D}_{r,\theta}(A)$ is Hausdorff-continuous. For \eqref{it:wave2}, suppose that $\theta_i\to\theta\in\Theta$ and that for every $i$, $\beta_{r,\theta_i}(\varepsilon) \ge N$, that is, there exists $n_i\ge N-1$ such that
\[
\Delta_{\theta_i}(n_i) := \sup_{a\in\mathcal{D}_{r,\theta_i}(A)} \sup_{\tau\in T(A)} \left| \frac{1}{n_i}\sum_{k=0}^{n_i-1} \tau(\alpha^k(a))-\tau_\alpha(a)\right| > \varepsilon.
\]
Then, for any sufficiently large $i$ we have
\begin{align*}
\Delta_{\theta}(n_i) &= \sup_{a\in\mathcal{D}_{r,\theta}(A)} \sup_{\tau\in T(A)} \left| \frac{1}{n_i}\sum_{k=0}^{n_i-1} \tau(\alpha^k(a))-\tau_\alpha(a)\right|\\
&\ge \Delta_{\theta_i}(n_i) - 2\dist_{\mathrm{H}}^{\|\cdot\|}\left(\mathcal{D}_{r,\theta}(A),\mathcal{D}_{r,\theta_i}(A)\right)\\
&> \varepsilon,
\end{align*}
so $\beta_{r,\theta}(\varepsilon) \ge N$ as well.

For \eqref{it:wave1}, suppose that $\varphi,\psi \colon A \to B$ are unital embeddings. The situation for the unitary distance
\[
d_U(\varphi,\psi)|_{\mathcal{D}_{r,\theta}(A)} = \inf_{u\in\mathcal{U}(B)} \sup_{a\in\mathcal{D}_{r,\theta}(A)} \|\varphi(a)-u\psi(a)u^*\|
\]
is similar, because
\[
\left| d_U(\varphi,\psi)|_{\mathcal{D}_{r,\eta}(A)} - d_U(\varphi,\psi)|_{\mathcal{D}_{r,\theta}(A)} \right| \le 2\dist_{\mathrm{H}}^{\|\cdot\|}\left(\mathcal{D}_{r,\eta}(A),\mathcal{D}_{r,\theta}(A)\right). \qedhere
\]
\end{proof}

\begin{example} \label{ex:metricdeform1}
Let us check that the family \eqref{eqn:wave} of metrics corresponding to the wave equation satisfies \eqref{eqn:lip}. (While the observations that follow certainly apply more generally, the wave equation is an illuminating example to keep in mind.) It suffices to find continuous functions $m,M\colon[0,p]\to(0,\infty)$ satisfying $m_t\rho_0 \le \rho_t \le M_t\rho_0$, for we can then take $k_{s,t}=\frac{m_t}{M_s}$ and $K_{s,t}=\frac{M_t}{m_s}$. We set $m_t:=\inf_{a \ne b\in I}\frac{\rho_t(a,b)}{\rho_0(a,b)}$ and $M_t:=\sup_{a \ne b\in I}\frac{\rho_t(a,b)}{\rho_0(a,b)}$. Note that, by L'Hospital's rule and the fundamental theorem of calculus,
\[
\lim_{b\to a} \frac{\rho_t(a,b)}{\rho_0(a,b)} = \left.\sqrt{\frac{1+\left(\frac{\partial}{\partial x}u(x,t)\right)^2}{1+\left(\frac{\partial}{\partial x}u(x,0)\right)^2}}\right|_{x=a},
\]
so $m_t$ and $M_t$ take values in $(0,\infty)$ and
\[
m_t\rho_0(a,b) \le \rho_t(a,b) \le M_t\rho_0(a,b) \: \text{ for every } a,b\in I.
\]
Their continuity follows from continuity of $t\mapsto\rho_t(a,b)$, which in turn follows from continuity of $t\mapsto\frac{\partial}{\partial x}u(x,t)$.

As for a model $\cs$-QMBS $A$ witnessing the effects described in Theorem~\ref{thm:metricdeform}, see Example~\ref{ex:cqmcs}. In particular, continuous families of unitary distances as in \eqref{it:wave1} can be associated to dimension drop algebras and to the algebra $\mathcal{A}_I$ (constructed in \cite[Section 4.4.4]{Jacelon:2021vc} as a limit of prime dimension drop algebras, and generalised in \cite[Theorem 4.4]{Jacelon:2022wr}). Any uniquely ergodic continuous self-map $h$ of the interval $I$ can be lifted to a tracially nondegenerate endomorphism $\alpha_h$ of $\mathcal{A}_I$ that fixes a unique trace. The Birkhoff convergence rates of $\alpha$ at different times $t$ can be measured and compared, and will vary upper semicontinuously for any fixed $\varepsilon$ as in \eqref{it:wave2}. One example of this procedure would be to take $h$ to be constant, or in other words $\alpha_h$ to be trace-collapsing. This is uninteresting because we would then have $\beta_{r,t}(\varepsilon)=1$ for every $\varepsilon>0$. Further, had we been hoping for an \emph{automorphism} $\alpha$, then our fate would have been disappointment, for there are no uniquely ergodic homeomorphisms of the interval. That is unless we are willing to first identify the endpoints of the string $I$ to obtain a loop $S^1$. The vibration of the loop is encoded by the metrics
\[
\rho'_t(a,b):=\min\{\rho_t(a,b),\rho_t(a,0)+\rho_t(p,b)\},
\]
relative to which we can compute the convergence rates $\beta_{r,t}$ associated with uniquely ergodic circle maps (say, irrational rotation) lifted to $\mathcal{A}_{S^1}$.
\end{example}

\begin{example}  \label{ex:metricdeform2}
We recall from \cite[Section 2.2]{Jacelon:2021vc} that the \emph{transport constant} $k_Z$ of a compact, path-connected metric space $(Z,\rho)$ is a measurement of the extent to which the geometry of $Z$ permits efficient continuous redistributions of masses represented by probability measures. Here, `efficient' means relative to the \emph{$\infty$-Wasserstein distance} $W_\infty$ (see \cite[Section 2.1]{Jacelon:2021vc}). Just like the $1$-Wasserstein distance \eqref{eqn:wass}, it varies continuously as $(\rho_\theta)_{\theta\in\Theta}$ varies according to \eqref{eqn:lip}. The same is true of $k_Z$. So in these circumstances, \cite[Theorem 3.1]{Jacelon:2021vc} provides (for certain $Z$ and $B$) a continuous family of solutions to the quantised, $K$-localised optimal transport problem in $\emb(Z,B)$. More precisely, suppose that $k_{Z,\theta}<\infty$ for some (hence every) $\theta$, that $K^*(Z)$ is finitely generated and torsion free, and that $B$ is a unital classifiable $\cs$-algebra of real rank zero whose tracial boundary $\partial_e(T(B))$ is compact and finite dimensional. Then, for every $\varphi\in\emb(C(Z),B)$, there is a continuous family of Czech--Kazakh inequalities
\[
W_{\infty,\theta}(\varphi,\psi) \le d_U(\varphi,\psi)|_{\mathcal{D}_{r,\theta}(C(Z))} \le k_{Z,\theta} \cdot W_{\infty,\theta}(\varphi,\psi)
\]
in
\begin{equation} \label{eqn:klocal}
\emb(C(Z),B)_{\varphi} := \{\psi \in\emb(C(Z),B) \mid K_*(\psi) = K_*(\varphi)\}.
\end{equation}
\end{example}

\subsection{Dynamical deformation} \label{subsection:dynamics}

\begin{proposition} \label{prop:rokhlin}
Let $\Theta$ be a compact metrisable space, $A$ a unital classifiable $\cs$-algebra and $(\alpha_\theta)_{\theta \in \Theta}$ a family of unital endomorphisms of $A$ that varies continuously with respect to the topology of pointwise convergence. Then, there is a continuous family of unital endomorphisms $\theta\mapsto\alpha'_\theta$ such that, for every $\theta\in\Theta$, $\Ell(\alpha'_\theta)=\Ell(\alpha_\theta)$ and $\alpha'_\theta$ has finite Rokhlin dimension (so in particular, $A\rtimes_{\alpha'_\theta}\nn$ is classifiable).
\end{proposition}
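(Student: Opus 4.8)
The plan is to inject finite Rokhlin dimension into the system by tensoring with a fixed model endomorphism of the Jiang--Su algebra, and then to reabsorb the extra tensor factor using $\js$-stability of $A$. First I would record the model: since $\js$ is classifiable (in particular separable, unital and $\js$-stable), genericity of finite Rokhlin dimension among unital endomorphisms of separable $\js$-stable $\cs$-algebras (\cite{Hirshberg:2015wh,Szabo:2019te}; cf.\ \cite[Definition 4.6, Lemma 4.7]{Jacelon:2022wr}) supplies a \emph{unital} $\sigma\in\End(\js)$ of finite Rokhlin dimension. Because $\sigma$ is unital, $K_1(\js)=0$ and $\js$ is monotracial, $\sigma$ induces the identity on $\Ell(\js)$.

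Next, since $A$ is classifiable it is $\js$-stable, so I would fix an isomorphism $\iota\colon A\to A\otimes\js$ that is approximately unitarily equivalent to the standard embedding $\id_A\otimes 1_\js$; in particular $\Ell(\iota)$ is the canonical identification $\Ell(A)\cong\Ell(A\otimes\js)=\Ell(A)\otimes\Ell(\js)$ (the Künneth-type isomorphism collapsing since $K_1(\js)=0$ and $\js$ is monotracial). Then set
\[
\alpha'_\theta := \iota^{-1}\circ(\alpha_\theta\otimes\sigma)\circ\iota\in\End(A),
\]
a unital endomorphism for every $\theta$ (and automatically injective, as $A$ is simple). Continuity of $\theta\mapsto\alpha'_\theta$ for the point-norm topology follows from that of $\theta\mapsto\alpha_\theta$: on elementary tensors $(\alpha_\theta\otimes\sigma)(a\otimes b)=\alpha_\theta(a)\otimes\sigma(b)$ varies continuously in $\theta$, and this propagates to all of $A\otimes\js$ by density together with the uniform bound $\|\alpha_\theta\otimes\sigma\|\le 1$, after which conjugating by the fixed $\iota$ preserves continuity. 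For the Elliott invariant, applying the functor $\Ell$ gives
\[
\Ell(\alpha'_\theta)=\Ell(\iota)^{-1}\circ\bigl(\Ell(\alpha_\theta)\otimes\id\bigr)\circ\Ell(\iota)=\Ell(\alpha_\theta),
\]
using $\Ell(\sigma)=\id$ together with the fact that $\Ell(\iota)$ is the canonical identification.

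It then remains to check that each $\alpha'_\theta$ has finite Rokhlin dimension, whereupon \cite[Lemma 4.8]{Jacelon:2022wr} yields classifiability of $A\rtimes_{\alpha'_\theta}\nn$. This reduces to the permanence of finite Rokhlin dimension under tensoring with a second coordinate that already carries it: a Rokhlin system for $\sigma$ inside the central sequence algebra $F(\js)$ maps into $F(A\otimes\js)$ via $d\mapsto 1_A\otimes d$ (the image is central in $A\otimes\js$), and is transformed by $\alpha_\theta\otimes\sigma$ in exactly the same way as by $\sigma$, since $\alpha_\theta(1_A)=1_A$; hence $\alpha_\theta\otimes\sigma$, and therefore its conjugate $\alpha'_\theta$, has finite Rokhlin dimension. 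I expect the main obstacle to be pinning down this permanence statement precisely for the notion of Rokhlin dimension of (not necessarily surjective) endomorphisms used in \cite[Definition 4.6]{Jacelon:2022wr}, together with the standard but essential point that the $\js$-absorbing isomorphism $\iota$ can be chosen so as to induce the canonical identification on $\Ell$. (Alternatively, one could regard $(\alpha_\theta)_{\theta\in\Theta}$ as a single unital $C(\Theta)$-linear endomorphism of $C(\Theta,A)$ and argue by Baire category in the corresponding Polish space of $C(\Theta)$-linear endomorphisms, but recovering the exact fibrewise Elliott data that way is less transparent than the tensorial construction above.)
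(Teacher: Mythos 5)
Your construction is correct, but it takes a different (more explicit) route than the paper. The paper's proof bundles the family into a single unital endomorphism $\alpha$ of $C(\Theta,A)$ and invokes the genericity result of \cite[\S11]{Szabo:2019te} (via \cite[Lemma 4.7]{Jacelon:2022wr}) for the separable, $\js$-stable algebra $C(\Theta,A)\cong C(\Theta)\otimes A$, obtaining $\alpha'$ with Rokhlin dimension $\le 1$ and $\Ell(\alpha')=\Ell(\alpha)$, and then reads off the fibres $\alpha'_\theta$. Your tensorial argument --- fixing a unital $\sigma\in\End(\js)$ of finite Rokhlin dimension, setting $\alpha'_\theta=\iota^{-1}\circ(\alpha_\theta\otimes\sigma)\circ\iota$ with $\iota$ approximately unitarily equivalent to $\id_A\otimes 1_\js$ --- is essentially the mechanism underlying that cited lemma, but carried out fibrewise by hand. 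All the key steps check out: the intertwining $(\alpha_\theta\otimes\sigma)\circ(\id_A\otimes 1_\js)=(\id_A\otimes 1_\js)\circ\alpha_\theta$ gives $\Ell(\alpha'_\theta)=\Ell(\alpha_\theta)$ cleanly; point-norm continuity propagates from elementary tensors by the uniform bound; and the Rokhlin towers $1_A\otimes d$ are asymptotically central in $A\otimes\js$ and transported correctly because $\alpha_\theta$ is unital (and, since \cite[Definition 4.6]{Jacelon:2022wr} phrases Rokhlin dimension of an endomorphism via the automorphic dilation, one has $\underset{\to}{\alpha_\theta\otimes\sigma}=\underset{\to}{\alpha_\theta}\otimes\underset{\to}{\sigma}$, so the permanence you flag reduces to the standard unital automorphism case). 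What your version buys is transparency: the paper's one-line appeal to the black box leaves implicit both that the perturbed endomorphism of $C(\Theta,A)$ is $C(\Theta)$-linear (hence fibred) and that global finite Rokhlin dimension and Elliott data descend to the fibres, whereas your construction exhibits the fibres and their invariants directly. What the paper's version buys is brevity and the stronger quantitative conclusion (Rokhlin dimension $\le 1$ uniformly) straight from the citation.
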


\begin{proof}
The endomorphisms $(\alpha_\theta)_{\theta\in\Theta}$ provide an endomorphism $\alpha$ of $C(\Theta,A)$, namely, $\alpha(f)(\theta):=\alpha_\theta(f(\theta))$. By the results of \cite[Section 11]{Szabo:2019te} (signposted in \cite[Lemma 4.7]{Jacelon:2022wr}), there is a unital endomorphism $\alpha'=(\alpha'_\theta)_{\theta\in\Theta}$ of $C(\Theta,A)$ whose Rokhlin dimension is $\le 1$ and whose Elliott invariant is equal to that of $\alpha$. (This only uses separability and $\js$-stability of $C(\Theta,A) \cong C(\Theta) \otimes A$.) The asserted properties then hold for $\alpha'_\theta\in\End(A)$.
\end{proof}

We recall from \cite[Definition 6.8]{Fukaya:1990aa} the definition of the equivariant Gromov--Hausdorff distance between group actions.

\begin{definition} \label{def:egh}
The \emph{equivariant Gromov--Hausdorff distance} $\dist_{\mathrm{GH}}(\alpha_0,\alpha_1)$ between actions $\alpha_0$ and $\alpha_1$ of a topological group $G$ on metric spaces $(X_0,\rho_0)$ and $(X_1,\rho_1)$ is the infimum over all $\varepsilon>0$ such that, for $i\in\{0,1\}=\zz/2\zz$, there is a map $f_i\colon X_i\to X_{i+1}$ that satisfies
\begin{equation} \label{eqn:egh}
\sup_{x\in X_{i+1}} \rho_i((\alpha_{i+1}(g)\circ f_i)(x),(f_i\circ\alpha_i(g))(x)) < \varepsilon \:\text{ for every }\: g\in G
\end{equation}
and whose image is $\varepsilon$-dense in $X_{i+1}$.
\end{definition}

A similar (weaker) distance can be defined relative to generating sets $S$ of $G$ by asking for \eqref{eqn:egh} to hold only for $g\in S$ (see \cite[Definition 3.1]{Chung:2020aa}), but we will require the rather strong $S=G$ topology on the set of isometry classes of $G$-spaces. Some examples of $\dist_{\mathrm{GH}}$-convergent sequences of isometric actions are provided in \cite[Examples 4.6--4.8]{Chung:2020aa}. We discuss other examples in Example~\ref{ex:rotation} and Example~\ref{ex:sphere}.\\

To the extent that it ties together its narrative threads, the following analogue of \cite[Theorem C]{Kaad:2021aa} is the present article's climax. Its proof hinges upon the main result of \cite{Carrion:2024aa}, which provides a classification via $\underline{K}T_u$ of unital, full, nuclear embeddings of unital, separable, exact, UCT $\cs$-algebras $A$ into unital, $\js$-stable $\cs$-algebras $B$ that have strict comparison. (This theorem also has a nonunital version in \cite{Carrion:2024aa}, but we only consider unital $\cs$-algebras in Theorem~\ref{thm:cf}.) As observed in \cite{Carrion:2024aa}, this in particular applies to unital, classifiable $A$ (that is, $A$ simple, separable, nuclear, $\js$-stable and satisfying the UCT) and  $B=C(\Theta,A)$ (already observed in Proposition~\ref{prop:rokhlin} to be $\js$-stable, and explicitly shown in \cite{Carrion:2024aa} to have strict comparison).

\begin{theorem} \label{thm:cf}
Let $(X,\rho)$ be a compact, connected metric space, $\Theta$ a compact metrisable space and $(h_\theta)_{\theta\in\Theta}$ a family of pointwise continuously varying maps $X\to X$ that are uniformly Lipschitz, meaning that there exists $M>0$ such that
\[
\rho(h_\theta^n(x),h_\theta^n(y)) \le M \rho(x,y) \: \text{ for every } \theta\in\Theta,\,n\in \zz,\,x,y\in X. 
\]
Then, there exists a unital, classifiable $\cs$-algebraic quantum metric Bauer simplex $A=A_X$ associated with $(X,\rho)$, and a family of tracial quantum systems $(A_X,\alpha_\theta)_{\theta\in\Theta}$ associated with $(X,h_\theta)_{\theta\in\Theta}$, such that:
\begin{enumerate}[(i)]
\item \label{it:field0} every $\alpha_\theta$ is approximately unitarily equivalent to an automorphism;
\item \label{it:field1} $\theta\mapsto\alpha_\theta$ is continuous with respect to the topology of pointwise convergence and so yields an endomorphism $\alpha$ of $C(\Theta,A_X)$;
\item \label{it:field2} $((A_X\rtimes_{\alpha_\theta}\nn)_{\theta\in\Theta},C(\Theta,A_X)\rtimes_\alpha\nn)$ forms a continuous field of $\cs$-algebras;
\item \label{it:field3} every $\alpha_\theta$ has finite Rokhlin dimension, so in particular, $(A_X,\alpha_\theta)$ has the unique tracial extension property and $A_X\rtimes_{\alpha_\theta}\nn$ is classifiable.
\end{enumerate}
For every $\theta\in\Theta$, let $L_\theta$ be the seminorm on $A_X\rtimes_{\alpha_\theta}\nn$ described in Proposition~\ref{prop:inherit}. Suppose further that $\theta\mapsto h_\theta$ is continuous with respect to the equivariant Gromov--Hausdorff topology. Then, $((A_X\rtimes_{\alpha_\theta}\nn,L_\theta)_{\theta\in\Theta},C(\Theta,A_X)\rtimes_\alpha\nn)$ forms a continuous field of $\cs$-algebraic quantum metric Choquet simplices and the the quantum distance functions
\begin{equation} \label{eqn:qdf}
d_\theta\colon\eta\mapsto\dq((A_X\rtimes_{\alpha_\eta}\nn,L_\eta),(A_X\rtimes_{\alpha_\theta}\nn,L_\theta))
\end{equation}
are continuous on $\Theta$.
\end{theorem}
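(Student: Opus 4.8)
I would break the proof into three blocks: producing the quantum systems (parts (i)--(iv)), verifying the continuous-field axioms of Definition~\ref{def:cf}, and proving continuity of the $d_\theta$. For the first block, start from Theorem~\ref{thm1}: as $(X,\rho)$ is compact and connected there is a unital, $K$-connected, classifiable $\cs$-QMBS $A=A_X$ with $\partial_e(T(A))\cong X$ whose seminorm is induced by $\rho$. Each isometry $h_\theta$ is a homeomorphism of $X$, so $K$-connectedness makes $\big((h_\theta)_*,\mathrm{id}_{K_*(A)}\big)$ an automorphism of the full invariant of $A$; running this over $\Theta$ gives a $C(\Theta)$-equivariant automorphism of the (unital, separable, exact, UCT) invariant of $C(\Theta)\otimes A$, which by the classification of morphisms \cite{Carrion:2024aa} applied $C(\Theta)$-linearly (with target the $\js$-stable, strict-comparison algebra $C(\Theta)\otimes A$, exactly as in the discussion before the theorem and as in \cite{Jacelon:2022wr}) lifts to a point-norm continuous family of unital endomorphisms of $A$ inducing $h_\theta$ on the boundary and sharing the full invariant of an automorphism lifting $h_\theta$. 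Proposition~\ref{prop:rokhlin} then upgrades this, preserving Elliott invariants (hence the boundary dynamics and approximate unitary equivalence to automorphisms), to a family $(\alpha_\theta)_{\theta\in\Theta}$ of Rokhlin-dimension-$\le1$ endomorphisms; this gives (i), (ii), (iv), while (iii) -- the continuous field $((A\rtimes_{\alpha_\theta}\nn)_\theta,C(\Theta,A)\rtimes_\alpha\nn)$ -- follows from \cite{Carrion:2024aa} as indicated there, and finite Rokhlin dimension supplies the unique tracial extension property (so the seminorms $L_\theta$ of Proposition~\ref{prop:inherit}, read through Remark~\ref{rem:unique} in the uniquely ergodic case, are defined) and classifiability of each $A\rtimes_{\alpha_\theta}\nn$.

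\textbf{Continuous field of quantum metric Choquet simplices.} Axioms~\ref{it:cf1} and~\ref{it:cf2} of Definition~\ref{def:cf} hold by Proposition~\ref{prop:inherit} and (iii). For~\ref{it:cf3}, fix $\theta_0$, $b$ with $L_{\theta_0}(b)<\infty$, and $\varepsilon>0$. By the continuous-field structure there is an honest section $f\in C(\Theta,A)\rtimes_\alpha\nn$ with $f_{\theta_0}=b$, so $\|f_{\theta_0}-b\|=0$ and $L_{\theta_0}(f_{\theta_0})=L_{\theta_0}(b)$, and the whole point reduces to upper semicontinuity of $\eta\mapsto L_\eta(f_\eta)$ at $\theta_0$. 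Now $L_\eta$ depends on its argument only through the conditional expectation onto $\underset{\to}{A}$, namely $L_\eta(f_\eta)=L_{W_\rho|_{\prob(X)^{h_\eta}}}\!\big(g_\eta|_{\prob(X)^{h_\eta}}\big)$ where $g_\eta\in C(X)$ is the function associated to $E_\eta(f_\eta)$ under $T(\underset{\to}{A})\cong T(A)\cong\prob(X)$; since $\eta\mapsto E_\eta(f_\eta)$ is a continuous section, $g_\eta\to g_{\theta_0}$ uniformly. Point-norm continuity of $(\alpha_\eta)$ forces $h_\eta\to h_{\theta_0}$ uniformly (each $\check a$ with $a\in A$ has $\sup_x|\tau_x(\alpha_\eta(a)-\alpha_{\theta_0}(a))|\le\|\alpha_\eta(a)-\alpha_{\theta_0}(a)\|\to0$, and such $\check a$ are dense in $C(X)$); being isometries, the $h_\eta$ are equi-Lipschitz, so the convergence is uniform, whence $\eta\mapsto\prob(X)^{h_\eta}$ is upper semicontinuous for the Hausdorff distance of $(\prob(X),W_\rho)$ (a weak-$*$ cluster point of $h_\eta$-invariant measures is $h_{\theta_0}$-invariant). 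Combining uniform convergence of the $g_\eta$, continuity of $W_\rho$, and this upper semicontinuity, a compactness argument on near-optimal pairs of invariant measures (with the degenerate case of coincident limit measures treated using the bound $L_\eta(f_\eta)\le L_A(E_\eta(f_\eta))$ of Remark~\ref{rem:unique}) yields $\limsup_{\eta\to\theta_0}L_\eta(f_\eta)\le L_{\theta_0}(b)=L_{\theta_0}(f_{\theta_0})$.

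\textbf{Continuity of the distance functions.} By the triangle inequality for $\dq$ it suffices that $\eta\mapsto(A\rtimes_{\alpha_\eta}\nn,L_\eta)$ be $\dq$-continuous. By Proposition~\ref{prop:inherit} this fibre is, as a $\cs$-QMCS, determined by $(\prob(X)^{h_\eta},W_\rho)$, and by Proposition~\ref{prop:gh} its $\dq$-distance to another fibre is the infimal Hausdorff distance over affine isometric embeddings of the two invariant-measure simplices into the state space of a common quantum metric Choquet simplex. Given $\varepsilon>0$ and $\eta'$ so close to $\eta$ that the equivariant Gromov--Hausdorff distance between $(X,\rho,h_\eta)$ and $(X,\rho,h_{\eta'})$ is $<\varepsilon$, take the approximate intertwiner $f\colon X\to X$ (an $\varepsilon$-isometry with $\varepsilon$-dense image satisfying $\sup_x\rho(h_{\eta'}^n f(x),f(h_\eta^n x))<\varepsilon$ for \emph{every} $n\in\zz$) and its analogue $g$ the other way. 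Then $f_*\colon\prob(X)\to\prob(X)$ is affine, $\varepsilon$-isometric for $W_\rho$, with $W_\rho$-$\varepsilon$-dense image, and for $\mu\in\prob(X)^{h_\eta}$ one has $W_\rho\big((h_{\eta'}^n)_*f_*\mu,f_*\mu\big)=W_\rho\big((h_{\eta'}^n\!\circ f)_*\mu,(f\circ h_\eta^n)_*\mu\big)<\varepsilon$ \emph{uniformly in $n$} -- this is where the full-group ($S=G$) hypothesis is used. Hence the Cesàro averages $\tfrac1N\sum_{k<N}(h_{\eta'}^k)_*f_*\mu$ stay within $\varepsilon$ of $f_*\mu$ while becoming $O(1/N)$-almost $h_{\eta'}$-invariant, so a weak-$*$ cluster point is a genuine $h_{\eta'}$-invariant measure within $\varepsilon$ of $f_*\mu$; thus $f_*$ (composed with this projection) maps $\prob(X)^{h_\eta}$ into the $\varepsilon$-neighbourhood of $\prob(X)^{h_{\eta'}}$, symmetrically for $g_*$, and both are affine $O(\varepsilon)$-isometries with $O(\varepsilon)$-dense image. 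Coupling the two seminorms through these maps in an admissible Lip-norm on the direct sum (the standard construction, cf.\ \cite[\S5]{Rieffel:2004aa} and the proof of Proposition~\ref{prop:gh}) then certifies $\dq\big((A\rtimes_{\alpha_\eta}\nn,L_\eta),(A\rtimes_{\alpha_{\eta'}}\nn,L_{\eta'})\big)=O(\varepsilon)$, as required.

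\textbf{Expected main obstacle.} The delicate step is the last one: transferring equivariant Gromov--Hausdorff closeness of the \emph{dynamics} on $X$ to quantum Gromov--Hausdorff closeness of the \emph{fibres}, i.e.\ controlling the projection of a pushed-forward invariant measure back onto the invariant-measure simplex. This is exactly where the strength of the $S=G$ equivariant Gromov--Hausdorff topology is essential -- with only generators the Cesàro estimate degrades to $O(n\varepsilon)$ and the statement genuinely fails (for instance for a family of irrational rotations, whose fibres jump from monotracial to large simplices). Secondary points needing care are the degenerate-pairs case in the upper-semicontinuity argument of Definition~\ref{def:cf}\ref{it:cf3}, and, in the construction, obtaining an honestly $C(\Theta)$-linear (not merely pointwise-unitarily-$C(\Theta)$-linear) endomorphism from \cite{Carrion:2024aa}.
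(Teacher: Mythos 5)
Your proposal follows the paper's architecture closely: the same model $A=\mathcal{A}_X$, the same use of $K$-connectedness to package $(h_\theta)_{\theta\in\Theta}$ into a morphism $KT_u(A)\to KT_u(C(\Theta,A))$ lifted via \cite{Carrion:2024aa}, the same appeal to Proposition~\ref{prop:rokhlin}, the same reduction of Definition~\ref{def:cf}.\ref{it:cf3} to upper semicontinuity of $\eta\mapsto L_\eta(f_\eta)$ via the identification of $L_\eta$ with the Lipschitz seminorm restricted to $\prob(X)^{h_\eta}$, and the same coupling construction (Proposition~\ref{prop:gh}) for the distance functions. The one place you take a genuinely different route is the final step: where the paper simply cites \cite[Theorem 1.3]{Chung:2020aa} for Hausdorff continuity of $\eta\mapsto\prob(X)^{h_\eta}$ under equivariant Gromov--Hausdorff continuity (and then feeds this into both the field axiom and Proposition~\ref{prop:gh}), you reprove the relevant half of that theorem by hand via Ces\`aro averages of pushed-forward invariant measures. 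Your argument is essentially the standard proof of Chung's result, and you correctly identify why the full $S=G$ topology is indispensable; this buys self-containedness at the cost of length.

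Two points are genuine gaps. First, item \eqref{it:field2} does not follow from \cite{Carrion:2024aa}: the continuous field of crossed products is \cite[Corollary 3.6]{Rieffel:1989ly}, and since that result is stated for automorphisms while the $\alpha_\theta$ produced by classification are only endomorphisms, one must check (as the paper does) that the fibres admit covariant representations varying continuously on a common Hilbert space, supplied here by $A\rtimes_{\alpha_\theta}\nn=\underset{\to}{A}\rtimes_{\underset{\to}{\alpha_\theta}}\zz\hookrightarrow \underset{\to}{C(\Theta,A)}\rtimes_{\underset{\to}{\alpha}}\zz$; lower semicontinuity comes from this and upper semicontinuity is automatic. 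Second, your treatment of the ``degenerate case'' in the upper-semicontinuity argument does not close: the bound $L_\eta(f_\eta)\le L_A(E_\eta(f_\eta))$ only yields $\limsup_{\eta\to\theta_0}L_\eta(f_\eta)\le L_A(g_{\theta_0})$, which at a fibre $\theta_0$ that is \emph{not} uniquely ergodic can strictly exceed $L_{\theta_0}(f_{\theta_0})=L_{W_\rho|_{\prob(X)^{h_{\theta_0}}}}(g_{\theta_0})$, so collapsing near-optimal pairs are not actually handled. Moreover, the upper semicontinuity of the set-valued map $\eta\mapsto\prob(X)^{h_\eta}$ that you extract from pointwise convergence alone is too weak here; the equivariant Gromov--Hausdorff hypothesis (equivalently, Hausdorff \emph{continuity} of the invariant simplices) is what the paper actually uses at this step, and you should use it too rather than trying to get by with less.
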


\begin{proof}
The $\cs$-QMBS $A$ can be taken to be one of the algebras $\mathcal{A}_X$ described in Example~\ref{ex:cqmcs}.\ref{it:ex2}. By \cite[Proposition 3.5]{Bosa:aa}, $T(C(\Theta,A))$ is a Bauer simplex with
\begin{equation} \label{eqn:product}
\partial_e(T(C(\Theta,A))) \cong \partial_e(T(C(\Theta))) \times \partial_e(T(A)) \cong \Theta \times X.
\end{equation}
It follows from \eqref{eqn:product} that the map $(\theta,x) \mapsto h_\theta(x)$, which by assumption is continuous, extends to a continuous affine map $h\colon T(C(\Theta,A)) \to T(A)$. By $K$-connectedness of $A$, $h$ is compatible with the maps $\iota_* \colon K_*(A) \to K_*(C(\Theta,A))$ induced by the inclusion $\iota$ of $A$ into $C(\Theta,A)$ as constant sequences (that is, $\iota(a) = 1\otimes a$), so we obtain a morphism $(h_*,\iota_*) \colon KT_u(A) \to KT_u(C(\Theta,A))$ in the notation of \cite{Carrion:wz} (see \cite[Definition 2.3]{Carrion:wz}). By \cite [Theorem 3.9]{Carrion:wz}, this morphism can be extended (non-canonically) to a morphism of the total invariant $\underline{K}T_u$. By \cite{Carrion:2024aa} (as discussed before the statement of Theorem~\ref{thm:cf}), this can be lifted to a unital embedding $\alpha = (\alpha_\theta)_{\theta\in\Theta} \colon A \to C(\Theta,A)$, thus giving us \eqref{it:field1}. As the invariant of every $\alpha_\theta$ is an isomorphism, \eqref{it:field0} holds by \cite [Theorems 9.3, 9.6]{Carrion:wz}. By \cite[Corollary 3.6]{Rieffel:1989ly}, $((A\rtimes_{\alpha_\theta}\nn)_{\theta\in\Theta},C(\Theta,A)\rtimes_\alpha\nn)$ forms a continuous field of $\cs$-algebras, so we have \eqref{it:field2}. Note that, while \cite[Corollary 3.6]{Rieffel:1989ly} is stated for actions by automorphisms, the key point, as spelled out in \cite[Theorem 3.5]{Rieffel:1989ly} and its preceding discussion, is that there should be a family of covariant representations, varying continuously over the fibres, on a common Hilbert space. This ensures lower semicontinuity, and upper semicontinuity holds automatically as pointed out in \cite[Theorem 3.2]{Rieffel:1989ly}. In any event, the common Hilbert space for the fibres is in the present situation provided by
\[
A\rtimes_{\alpha_\theta}\nn = \underset{\to}{A}\rtimes_{\underset{\to}{\alpha_\theta}}\zz \hookrightarrow \underset{\to}{C(\Theta,A)}\rtimes_{\underset{\to}{\alpha}}\zz.
\]
Next, by Proposition~\ref{prop:rokhlin}, we can ensure that each $\alpha_\theta$ has finite Rokhlin dimension, giving us \eqref{it:field3}. 

Finally, suppose that $\theta\mapsto h_\theta$ is continuous with respect to the equivariant Gromov--Hausdorff topology. We will appeal to \cite[Theorem 1.3]{Chung:2020aa}, a result which is stated for \emph{isometric} actions but whose proof works just as well for group actions that are uniformly Lipschitz (at the negligible cost of increasing some estimating constants by a bounded factor). Let $x=\sum_{n\in\zz}a_nu_\theta^n \in \underset{\to}{A}\rtimes_{\underset{\to}{\alpha_\theta}}\zz = A\rtimes_{\alpha_\theta}\nn$. Viewing the coefficients $a_n$ as constant functions $\Theta\to \underset{\to}{A}$,
\[
f := \sum_{n\in\zz}a_nu^n \in \underset{\to}{C(\Theta,A)}\rtimes_{\underset{\to}{\alpha}}\zz = C(\Theta,A)\rtimes_\alpha\nn
\]
is a continuous section with $f_\theta=x$. By definition, $L_\eta(f_\eta)$ is the Lipschitz seminorm of $\widehat{a_0}|_{\prob(X)^{h_\eta}}$ (see Proposition~\ref{prop:inherit} and the discussion that precedes it). By \cite[Theorem 1.3]{Chung:2020aa}, the map $\eta \mapsto \prob(X)^{h_\eta}$ is continuous with respect to the Hausdorff metric associated with $(\prob(X),W_\rho)$, and so $\eta\mapsto L_\eta(f_\eta)$ is also continuous. Except, as forewarned in Remark~\ref{rem:unique}, we should be careful at values $\theta$ for which $h_\theta$ is uniquely ergodic. At these points, $L_\eta(f_\eta)$ is upper semicontinuous, which is enough to fulfill the requirements of Definition~\ref{def:cf}. We therefore have a continuous field of $\cs$-algebraic quantum metric Choquet simplices. Continuity of the functions $d_\theta$ follows from Proposition~\ref{prop:gh}.
\end{proof}

Theorem~\ref{thm:cf} in particular applies to deformations $h_t:=g_t \circ h \circ g_t^{-1}$ of an isometric action $h$ on a compact, connected Riemannian manifold $(X,\rho)$ by a smooth path of diffeomorphisms $g_t\colon X\to X$. These maps are uniformly Lipschitz and (by design) vary continuously with respect to the equivariant Gromov--Hausdorff distance. Indeed,
\[
t \mapsto \prob(X)^{g_t \circ h \circ g_t^{-1}} = (g_t)_*\left(\prob(X)^h\right)
\]
is Hausdorff continuous (regardless of whether or not the maps $g_t$ are Lipschitz). This affords us access to a range of examples.

\begin{example}[Tracial rotation algebras] \label{ex:rotation}
Equip the circle $S^1$ with the geodesic metric, let $h_\theta \colon S^1 \to S^1$ be rotation by some fixed angle $2\pi\theta\in(0,2\pi)$, and fix a path of diffeomorphisms $g_t \colon S^1 \to S^1$ that varies continuously relative to the Whitney $C^1$ -topology (see \cite[Section 8]{Kaad:2021aa}), so that, in particular, $g_t$ and its derivative $g'_t$ vary uniformly continuously. For example, let us take $(g_t)_{t\in[-1,1]}$ defined by $g_t(x) = x + \frac{t}{2} \sin^2x$ (viewing $S^1$ as the interval $[0,\pi]$ with its endpoints identified). Then, $(g_t)_{t\in[-1,1]}$ is uniformly (bi-)Lipschitz, and hence so is the family of actions of $\zz$ on $S^1$ defined by $n \mapsto g_t \circ h_\theta^n \circ g_t^{-1}$. By Theorem~\ref{thm:cf} and the above discussion, there is a (unital) classifiable $\cs$-algebraic quantum metric Bauer simplex $A_{S^1}$ and a family of tracial quantum systems $(A_{S^1},\alpha_t)_{t \in [0,1]}$ associated with $(S^1,g_t \circ h_\theta \circ g_t^{-1})_{t \in [0,1]}$ with the properties stated in the theorem.

We call the tracial quantum crossed product $A_{S^1} \rtimes_{\alpha_{h_\theta}} \nn$ associated with $(S^1,h_\theta)$ a \emph{tracial rotation algebra}. Note that, if $A_{S^1}=\js_{S^1}$ is the projectionless model described in Example~\ref{ex:cqmcs}.\ref{it:ex2}, then as mentioned in \cite[Example 4.10]{Jacelon:2022wr} one can compute that
\begin{equation} \label{eqn:ksphere}
\left(K_0(A_{S^1}),K_0(A_{S^1})_+,[1]\right) \cong (\zz,\nn,1).
\end{equation}
Note also that, regardless of whether $\theta\in(0,1)$ is rational or irrational, $A_{S^1} \rtimes_{\alpha_{h_\theta}} \nn$ is classifiable (a consequence of finite Rokhlin dimension, which is a regularity property of the tracial quantum system that we have prioritised over classical dynamical features like topological minimality). It is also a quantum metric \emph{Bauer} simplex, the boundary of the trace space being either a singleton (if $\theta$ is irrational) or homeomorphic to the circle via the set of finitely supported periodic measures (if $\theta$ is rational). The other fibres $A_{S^1} \rtimes_{\alpha_t} \nn$ are deformed tracial rotation algebras whose isometric isomorphism classes vary continuously relative to the quantum intertwining gap $\gamma_q$ (and also the quantum Gromov--Hausdorff distance). Indeed,
\[
(g_s\circ g_t^{-1})_* \colon T(A_{S^1} \rtimes_{\alpha_t} \nn) = (g_t)_*\left(\prob(X)^{h_\theta}\right) \to (g_s)_*\left(\prob(X)^{h_\theta}\right) = T(A_{S^1} \rtimes_{\alpha_s} \nn)
\]
is invertible, and is also approximately isometric (in the sense of Definition~\ref{def:quig}) if $s$ is close to $t$.

 We can also arrange for the fibres to be $K$-connected. Suppose, for example, that instead of $K_1(\alpha_t)=\id_{K_1(A)}$ as in the proof of Theorem~\ref{thm:cf} we demand that $K_1(\alpha_t)=0$ (our  replacement for a technique like topological orbit breaking that would be used to manage $K_1$ in the commutative setting). We would then lose \eqref{it:field0}, but again as mentioned in \cite[Example 4.10]{Jacelon:2022wr} we would have from the Pimsner--Voiculescu sequence \eqref{eqn:pv} that
\[
\left(K_0(A_{S^1} \rtimes_{\alpha_t} \nn),K_0(A_{S^1} \rtimes_{\alpha_t} \nn)_+,[1],K_1(A_{S^1} \rtimes_{\alpha_t} \nn)\right) \cong (\zz,\nn,1,\zz). 
\]
In particular, as in Corollary~\ref{cor:quig}, $\gamma_q$ is a quasimetric on the isometric isomorphism classes of fibres.

Note that the first half of Theorem~\ref{thm:cf} also applies to the more na\"{i}ve deformation $h_\theta \mapsto h_{t\theta}$, for fixed $\theta$ and $t\in[0,1]$. We would again obtain a continuous field of tracial rotation algebras. But we do not have equivariant Gromov--Hausdorff continuity, and we do not have a continuous $\cs$-QMCS field. That said, it is not hard to show for this field that irrational values of $\theta$ are continuity points of the quantum distance functions \eqref{eqn:qdf}.
\end{example}

\begin{example} \label{ex:sphere}
By \cite{Asimov:1976aa}, any finite group $G$ of $k+1$ elements can be exhibited as the isometry group of a Riemannian sphere $S^{k-1}$. Conjugating $G$ by a path $g_t$ of diffeomorphisms as in Example~\ref{ex:rotation} provides a uniformly Lipschitz family of $G$-actions whose simplices of invariant measures vary Hausdorff continuously. Moreover, as discussed in Remark~\ref{rem:groups}, we can (fibrewise) lift each $G$-action to a $G$-action $\alpha_t$ on the classifiable $\cs$-algebra $\js_{S^{k-1}}\otimes\mathcal{Q}$ witnessing the tracial dynamics. Whether or not we can find a family of lifts that varies continuously with $t$ is a question better left for the authors of \cite{Carrion:2024aa,Carrion:wz}, but with the current technology we can at least touch on the case $G=\zz/(k+1)\zz$, that is, the case of periodic $\zz$-actions. Here, we get a continuous field of (unital) classifiable $\cs$-algebraic quantum metric Choquet simplices $\js_{S^{k-1}} \rtimes_{\alpha_t} \nn$, where at every fibre $t$, $\alpha_t$ is approximately unitarily equivalent to an automorphism whose $(k+1)$-th power is approximately unitarily equivalent to the identity.
\end{example}


\begin{thebibliography}{10}

\bibitem{Alfsen:1971hl}
E.~M. Alfsen.
\newblock {\em Compact convex sets and boundary integrals}.
\newblock Springer-Verlag, New York, 1971.
\newblock Ergebnisse der Mathematik und ihrer Grenzgebiete, Band 57.

\bibitem{Delaroche:1987aa}
C.~Anantharaman-Delaroche.
\newblock Syst\`emes dynamiques non commutatifs et moyennabilit\'e.
\newblock {\em Math. Ann.}, 279(2):297--315, 1987.

\bibitem{Asimov:1976aa}
D.~Asimov.
\newblock Finite groups as isometry groups.
\newblock {\em Trans. Amer. Math. Soc.}, 216:388--390, 1976.

\bibitem{Barlak:2017aa}
S.~Barlak and G.~Szab\'{o}.
\newblock Rokhlin actions of finite groups on {UHF}-absorbing {$\cs$}-algebras.
\newblock {\em Trans. Amer. Math. Soc.}, 369(2):833--859, 2017.

\bibitem{Benaim:0aa}
M.~Bena\"{\i}m and T.~Hurth.
\newblock {\em Markov chains on metric spaces---a short course}.
\newblock Universitext. Springer, Cham, 2022.

\bibitem{Benoist:2016ur}
Y.~Benoist and J.-F. Quint.
\newblock Central limit theorem for linear groups.
\newblock {\em Ann. Probab.}, 44(2):1308--1340, 2016.

\bibitem{Benoist:2016ab}
Y.~Benoist and J.-F. Quint.
\newblock {\em Random walks on reductive groups}, volume~62 of {\em Ergebnisse
  der Mathematik und ihrer Grenzgebiete. 3. Folge. A Series of Modern Surveys
  in Mathematics [Results in Mathematics and Related Areas. 3rd Series. A
  Series of Modern Surveys in Mathematics]}.
\newblock Springer, Cham, 2016.

\bibitem{Blackadar:1998qf}
B.~Blackadar.
\newblock {\em {$K$}-theory for operator algebras}, volume~5 of {\em
  Mathematical Sciences Research Institute Publications}.
\newblock Cambridge University Press, Cambridge, second edition, 1998.

\bibitem{Blackadar:1980zr}
B.~E. Blackadar.
\newblock Traces on simple {AF} {$\cs$}-algebras.
\newblock {\em J. Funct. Anal.}, 38(2):156--168, 1980.

\bibitem{Bosa:aa}
J.~Bosa, N.~P. Brown, Y.~Sato, A.~Tikuisis, S.~White, and W.~Winter.
\newblock Covering dimension of {$\cs$}-algebras and 2-coloured
  classification.
\newblock {\em Mem. Amer. Math. Soc.}, 257(1233):vii+97, 2019.

\bibitem{Breiman:1960aa}
L.~Breiman.
\newblock The strong law of large numbers for a class of {M}arkov chains.
\newblock {\em Ann. Math. Statist.}, 31:801--803, 1960.

\bibitem{Carrion:2024aa}
J.~R. Carri\'{o}n, J.~Gabe, C.~Schafhauser, A.~Tikuisis, and S.~White.
\newblock Classification of $^*$-homomorphisms {{\rm II}}.
\newblock In preparation.

\bibitem{Carrion:wz}
J.~R. Carri\'{o}n, J.~Gabe, C.~Schafhauser, A.~Tikuisis, and S.~White.
\newblock Classification of $^*$-homomorphisms {{\rm I}}: {S}imple nuclear
  {$\cs$}-algebras.
\newblock arXiv:2307.06480 [math.OA], 2023.

\bibitem{Chung:2020aa}
N.-P. Chung.
\newblock Gromov-{H}ausdorff distances for dynamical systems.
\newblock {\em Discrete Contin. Dyn. Syst.}, 40(11):6179--6200, 2020.

\bibitem{Cortez:2008aa}
M.~I. Cortez and S.~Petite.
\newblock {$G$}-odometers and their almost one-to-one extensions.
\newblock {\em J. Lond. Math. Soc. (2)}, 78(1):1--20, 2008.

\bibitem{Cuntz:1982uk}
J.~Cuntz.
\newblock The internal structure of simple {$\cs$}-algebras.
\newblock In {\em Operator algebras and applications, {P}art {I} ({K}ingston,
  {O}nt., 1980)}, volume~38 of {\em Proc. Sympos. Pure Math.}, pages 85--115.
  Amer. Math. Soc., Providence, R.I., 1982.

\bibitem{Cuntz:1979fv}
J.~Cuntz and G.~K. Pedersen.
\newblock Equivalence and traces on {$\cs$}-algebras.
\newblock {\em J. Funct. Anal.}, 33(2):135--164, 1979.

\bibitem{Davidson:2022aa}
K.~R. Davidson and M.~Kennedy.
\newblock Noncommutative {C}hoquet theory.
\newblock arXiv:1905.08436 [math.OA], 2022.

\bibitem{Diaconis:1999aa}
P.~Diaconis and D.~Freedman.
\newblock Iterated random functions.
\newblock {\em SIAM Rev.}, 41(1):45--76, 1999.

\bibitem{Dixmier:1964rt}
J.~Dixmier.
\newblock {\em Les {$\cs$}-alg{\`e}bres et leurs repr{\'e}sentations}.
\newblock Cahiers Scientifiques, Fasc. XXIX. Gauthier-Villars \& Cie,
  {\'E}diteur-Imprimeur, Paris, 1964.

\bibitem{Downarowicz:1991te}
T.~Downarowicz.
\newblock The {C}hoquet simplex of invariant measures for minimal flows.
\newblock {\em Israel J. Math.}, 74(2-3):241--256, 1991.

\bibitem{Dykema:2004aa}
K.~J. Dykema.
\newblock Exactness of reduced amalgamated free product {$\cs$}-algebras.
\newblock {\em Forum Math.}, 16(2):161--180, 2004.

\bibitem{Elliott:2016ab}
G.~A. Elliott, G.~Gong, H.~Lin, and Z.~Niu.
\newblock On the classification of simple amenable {$\cs$}-algebras with finite
  decomposition rank, {\rm{II}}.
\newblock arXiv:1507.03437 [math.OA], 2016.

\bibitem{Elliott:2020wc}
G.~A. Elliott, G.~Gong, H.~Lin, and Z.~Niu.
\newblock The classification of simple separable {KK}-contractible {$\cs$}-algebras with finite nuclear dimension.
\newblock {\em J. Geom. Phys.}, 158:103861, 51, 2020.

\bibitem{Elliott:2020vp}
G.~A. Elliott, G.~Gong, H.~Lin, and Z.~Niu.
\newblock Simple stably projectionless {$\cs$}-algebras with generalized
  tracial rank one.
\newblock {\em J. Noncommut. Geom.}, 14(1):251--347, 2020.

\bibitem{Elliott:2023aa}
G.~A. Elliott, C.~G. Li, and Z.~Niu.
\newblock Remarks on {V}illadsen algebras.
\newblock {\em J. Funct. Anal.}, 287(7):Paper No. 110547, 2024.

\bibitem{Fukaya:1990aa}
K.~Fukaya.
\newblock Hausdorff convergence of {R}iemannian manifolds and its applications.
\newblock In {\em Recent topics in differential and analytic geometry}, volume
  18-{\rm I} of {\em Adv. Stud. Pure Math.}, pages 143--238. Academic Press,
  Boston, MA, 1990.

\bibitem{Gardella:2023aa}
E.~Gardella, S.~Geffen, J.~Kranz, and P.~Naryshkin.
\newblock Classifiability of crossed products by nonamenable groups.
\newblock {\em J. Reine Angew. Math.}, 797:285--312, 2023.

\bibitem{Gardella:2022ab}
E.~Gardella, S.~Geffen, J.~Kranz, P.~Naryshkin, and A.~Vaccaro.
\newblock Tracially amenable actions and purely infinite crossed products.
\newblock arXiv:2211.16872 [math.OA], to appear in Math. Ann., 2022.

\bibitem{Gardella:2021tb}
E.~Gardella, I.~Hirshberg, and A.~Vaccaro.
\newblock Strongly outer actions of amenable groups on {$\mathcal{Z}$}-stable
  nuclear {$\cs$}-algebras.
\newblock {\em J. Math. Pures Appl. (9)}, 162:76--123, 2022.

\bibitem{Gong:2020vg}
G.~Gong and H.~Lin.
\newblock On classification of non-unital amenable simple {$\cs$}-algebras,
  {II}.
\newblock {\em J. Geom. Phys.}, 158:103865, 102, 2020.

\bibitem{Gong:2021tw}
G.~Gong and H.~Lin.
\newblock On classification of non-unital amenable simple {$\cs$}-algebras,
  {III}, {S}tably projectionless {$\cs$}-algebras.
\newblock arXiv:2112.14003 [math.OA], 2021.

\bibitem{Gong:2020ud}
G.~Gong, H.~Lin, and Z.~Niu.
\newblock A classification of finite simple amenable {$\mathcal Z$}-stable
  {$\cs$}-algebras, {I}: {$\cs$}-algebras with generalized tracial rank
  one.
\newblock {\em C. R. Math. Acad. Sci. Soc. R. Can.}, 42(3):63--450, 2020.

\bibitem{Gong:2020uf}
G.~Gong, H.~Lin, and Z.~Niu.
\newblock A classification of finite simple amenable {$\mathcal Z$}-stable
  {$\cs$}-algebras, {II}: {$\cs$}-algebras with rational
  generalized tracial rank one.
\newblock {\em C. R. Math. Acad. Sci. Soc. R. Can.}, 42(4):451--539, 2020.

\bibitem{Gromov:1981aa}
M.~Gromov.
\newblock Groups of polynomial growth and expanding maps.
\newblock {\em Inst. Hautes \'{E}tudes Sci. Publ. Math.}, (53):53--73, 1981.

\bibitem{Haagerup:1978aa}
U.~Haagerup.
\newblock An example of a nonnuclear {$\cs$}-algebra, which has the
  metric approximation property.
\newblock {\em Invent. Math.}, 50(3):279--293, 1978/79.

\bibitem{Heinonen:2001aa}
J.~Heinonen.
\newblock {\em Lectures on analysis on metric spaces}.
\newblock Universitext. Springer-Verlag, New York, 2001.

\bibitem{Higson:2001aa}
N.~Higson and G.~Kasparov.
\newblock {$E$}-theory and {$KK$}-theory for groups which act properly and
  isometrically on {H}ilbert space.
\newblock {\em Invent. Math.}, 144(1):23--74, 2001.

\bibitem{Hirshberg:2015wh}
I.~Hirshberg, W.~Winter, and J.~Zacharias.
\newblock Rokhlin dimension and {$\cs$}-dynamics.
\newblock {\em Comm. Math. Phys.}, 335(2):637--670, 2015.

\bibitem{Jacelon:2022wr}
B.~Jacelon.
\newblock Chaotic tracial dynamics.
\newblock {\em Forum Math. Sigma}, 11:Paper No. e39, 21, 2023.

\bibitem{Jacelon:2021vc}
B.~Jacelon.
\newblock Metrics on trace spaces.
\newblock {\em J. Funct. Anal.}, 285(4):Paper No. 109977, 32, 2023.

\bibitem{Jacelon:2021wa}
B.~Jacelon, K.~R. Strung, and A.~Vignati.
\newblock Optimal transport and unitary orbits in {$\cs$}-algebras.
\newblock {\em J. Funct. Anal.}, 281(5):109068, 2021.

\bibitem{Jacelon:2021uc}
B.~Jacelon and A.~Vignati.
\newblock Stably projectionless {F}ra\"{\i}ss\'{e} limits.
\newblock {\em Studia Math.}, 267(2):161--199, 2022.

\bibitem{Jiang:1999hb}
X.~Jiang and H.~Su.
\newblock On a simple unital projectionless {$\cs$}-algebra.
\newblock {\em Amer. J. Math.}, 121(2):359--413, 1999.

\bibitem{Kaad:2021aa}
J.~Kaad and D.~Kyed.
\newblock Dynamics of compact quantum metric spaces.
\newblock {\em Ergodic Theory Dynam. Systems}, 41(7):2069--2109, 2021.

\bibitem{Kennedy:2022aa}
M.~Kennedy and E.~Shamovich.
\newblock Noncommutative {C}hoquet simplices.
\newblock {\em Math. Ann.}, 382(3-4):1591--1629, 2022.

\bibitem{Kerr:2009aa}
D.~Kerr and H.~Li.
\newblock On {G}romov--{H}ausdorff convergence for operator metric spaces.
\newblock {\em J. Operator Theory}, 62(1):83--109, 2009.

\bibitem{Kishimoto:1981tg}
A.~Kishimoto.
\newblock Outer automorphisms and reduced crossed products of simple {$\cs$}-algebras.
\newblock {\em Comm. Math. Phys.}, 81(3):429--435, 1981.

\bibitem{Latremoliere:2016wn}
F.~Latr\'{e}moli{\`e}re.
\newblock Quantum metric spaces and the {G}romov--{H}ausdorff propinquity.
\newblock In {\em Noncommutative geometry and optimal transport}, volume 676 of
  {\em Contemp. Math.}, pages 47--133. Amer. Math. Soc., Providence, RI, 2016.

\bibitem{Li:2003aa}
H.~Li.
\newblock {$\cs$}-algebraic quantum {G}romov--{H}ausdorff distance.
\newblock arXiv:math/0312003 [math.OA], 2003.

\bibitem{Li:2006aa}
H.~Li.
\newblock Order-unit quantum {G}romov--{H}ausdorff distance.
\newblock {\em J. Funct. Anal.}, 231(2):312--360, 2006.

\bibitem{Li:2009aa}
H.~Li.
\newblock Compact quantum metric spaces and ergodic actions of compact quantum
  groups.
\newblock {\em J. Funct. Anal.}, 256(10):3368--3408, 2009.

\bibitem{Liao:2016ui}
H.-C. Liao.
\newblock A {R}okhlin type theorem for simple {$\cs$}-algebras of finite
  nuclear dimension.
\newblock {\em J. Funct. Anal.}, 270(10):3675--3708, 2016.

\bibitem{Lin:2007qf}
H.~Lin.
\newblock Simple nuclear {$\cs$}-algebras of tracial topological rank one.
\newblock {\em J. Funct. Anal.}, 251(2):601--679, 2007.

\bibitem{Meyer:2006aa}
R.~Meyer and R.~Nest.
\newblock The {B}aum--{C}onnes conjecture via localisation of categories.
\newblock {\em Topology}, 45(2):209--259, 2006.

\bibitem{Michael:1956aa}
E.~Michael.
\newblock Continuous selections. {I}.
\newblock {\em Ann. of Math. (2)}, 63:361--382, 1956.

\bibitem{Ozawa:2021aa}
N.~Ozawa and Y.~Suzuki.
\newblock On characterizations of amenable {$\cs$}-dynamical systems and new examples.
\newblock {\em Sel. Math., New Ser.}, 27(5):92, 2021.

\bibitem{Pedersen:1969kq}
G.~K. Pedersen.
\newblock Measure theory for {$\cs$}-algebras. {III}.
\newblock {\em Math. Scand.}, 25:71--93, 1969.

\bibitem{Phelps:2001rz}
R.~R. Phelps.
\newblock {\em Lectures on {C}hoquet's theorem}, volume 1757 of {\em Lecture
  Notes in Mathematics}.
\newblock Springer-Verlag, Berlin, second edition, 2001.

\bibitem{Pimsner:1980yu}
M.~Pimsner and D.~Voiculescu.
\newblock Exact sequences for {$K$}-groups and {E}xt-groups of certain
  cross-product {$\cs$}-algebras.
\newblock {\em J. Operator Theory}, 4(1):93--118, 1980.

\bibitem{Pimsner:1982aa}
M.~Pimsner and D.~Voiculescu.
\newblock {$K$}-groups of reduced crossed products by free groups.
\newblock {\em J. Operator Theory}, 8(1):131--156, 1982.

\bibitem{Rieffel:1989ly}
M.~A. Rieffel.
\newblock Continuous fields of {$\cs$}-algebras coming from group cocycles and
  actions.
\newblock {\em Math. Ann.}, 283(4):631--643, 1989.

\bibitem{Rieffel:1998ww}
M.~A. Rieffel.
\newblock Metrics on states from actions of compact groups.
\newblock {\em Doc. Math.}, 3:215--229, 1998.

\bibitem{Rieffel:1999aa}
M.~A. Rieffel.
\newblock Metrics on state spaces.
\newblock {\em Doc. Math.}, 4:559--600, 1999.

\bibitem{Rieffel:2002aa}
M.~A. Rieffel.
\newblock Group {$\cs$}-algebras as compact quantum metric spaces.
\newblock {\em Doc. Math.}, 7:605--651, 2002.

\bibitem{Rieffel:2004aa}
M.~A. Rieffel.
\newblock Gromov--{H}ausdorff distance for quantum metric spaces.
\newblock {\em Mem. Amer. Math. Soc.}, 168(796):1--65, 2004.
\newblock Appendix 1 by Hanfeng Li.

\bibitem{Rieffel:2010ab}
M.~A. Rieffel.
\newblock Leibniz seminorms for ``matrix algebras converge to the sphere''.
\newblock In {\em Quanta of maths}, volume~11 of {\em Clay Math. Proc.}, pages
  543--578. Amer. Math. Soc., Providence, RI, 2010.

\bibitem{Robert:2009aa}
L.~Robert.
\newblock On the comparison of positive elements of a {$\cs$}-algebra by lower
  semicontinuous traces.
\newblock {\em Indiana Univ. Math. J.}, 58(6):2509--2515, 2009.

\bibitem{Rong:2010aa}
X.~Rong.
\newblock Convergence and collapsing theorems in {R}iemannian geometry.
\newblock In {\em Handbook of geometric analysis, {N}o. 2}, volume~13 of {\em
  Adv. Lect. Math. (ALM)}, pages 193--299. Int. Press, Somerville, MA, 2010.

\bibitem{Rordam:1995fr}
M.~R{\o}rdam.
\newblock Classification of certain infinite simple {$\cs$}-algebras.
\newblock {\em J. Funct. Anal.}, 131(2):415--458, 1995.

\bibitem{Rordam:2002yu}
M.~R{\o}rdam and E.~St{\o}rmer.
\newblock {\em Classification of nuclear {$\cs$}-algebras. {E}ntropy in
  operator algebras}, volume 126 of {\em Encyclopaedia of Mathematical
  Sciences}.
\newblock Springer-Verlag, Berlin, 2002.
\newblock Operator Algebras and Non-commutative Geometry, 7.

\bibitem{Stacey:1993uc}
P.~J. Stacey.
\newblock Crossed products of {$\cs$}-algebras by {$^*$}-endomorphisms.
\newblock {\em J. Austral. Math. Soc. Ser. A}, 54(2):204--212, 1993.

\bibitem{Szabo:2019te}
G.~Szab\'{o}, J.~Wu, and J.~Zacharias.
\newblock Rokhlin dimension for actions of residually finite groups.
\newblock {\em Ergodic Theory Dynam. Systems}, 39(8):2248--2304, 2019.

\bibitem{Thoma:1964aa}
E.~Thoma.
\newblock \"{U}ber unit\"{a}re {D}arstellungen abz\"{a}hlbarer, diskreter
  {G}ruppen.
\newblock {\em Math. Ann.}, 153:111--138, 1964.

\bibitem{Thomsen:1994qy}
K.~Thomsen.
\newblock Inductive limits of interval algebras: the tracial state space.
\newblock {\em Amer. J. Math.}, 116(3):605--620, 1994.

\bibitem{Ursu:2021wp}
D.~Ursu.
\newblock Characterizing traces on crossed products of noncommutative
  {$\cs$}-algebras.
\newblock {\em Adv. Math.}, 391:Paper No. 107955, 29, 2021.

\bibitem{Xia:2009aa}
Q.~Xia.
\newblock The geodesic problem in quasimetric spaces.
\newblock {\em J. Geom. Anal.}, 19(2):452--479, 2009.

\bibitem{Yannelis:1983aa}
N.~C. Yannelis and N.~D. Prabhakar.
\newblock Existence of maximal elements and equilibria in linear topological
  spaces.
\newblock {\em J. Math. Econom.}, 12(3):233--245, 1983.

\end{thebibliography}
\end{document}